\pgfplotsset{compat=1.14}
\definecolor{ffffff}{rgb}{1,1,1}
\DeclareFontFamily{U}{tipa}{}
\DeclareFontShape{U}{tipa}{m}{n}{<->tipa10}{}
\newcommand{\arc@char}{{\usefont{U}{tipa}{m}{n}\symbol{62}}}%
\newcommand{\arc}[1]{\mathpalette\arc@arc{#1}}
\newcommand{\arc@arc}[2]{%
  \sbox0{$\m@th#1#2$}%
  \vbox{
    \hbox{\resizebox{\wd0}{\height}{\arc@char}}
    \nointerlineskip
    \box0
  }%
}
\theoremstyle{definition}
\newtheorem{theorem}{Theorem}[section]
\newtheorem{thm}[theorem]{Theorem}
\newtheorem{prop}[theorem]{Proposition}
\newtheorem{defn}[theorem]{Definition}
\newtheorem{lemma}[theorem]{Lemma}
\newtheorem{cor}[theorem]{Corollary}
\newtheorem{prop-def}{Proposition-Definition}[section]
\newtheorem{rema}[theorem]{Remark}
\newtheorem{conjecture}{Conjecture}
\newtheorem{exam}[theorem]{Example}
\newtheorem{nota}[theorem]{Notation}
\newcommand{\R}{{\mathbb R}}
\newcommand{\C}{{\mathbb C}}
\newcommand{\Z}{{\mathbb Z}}
\newcommand{\End}{\textrm{End}}
\newcommand{\Hol}{\text{Hol}}
\newcommand{\state}[1]{{\left| #1\right\rangle}}
\newcommand{\Ten}{\text{Ten}}
\begin{document}

\setlength{\oddsidemargin}{0cm} \setlength{\evensidemargin}{0cm}
\baselineskip=18pt

\title[Covariant derivatives of eigenfunctions along parallel tensors over space forms]{Covariant derivatives of eigenfunctions along parallel tensors over space forms and a conjecture motivated by the vertex algebraic structure}
\author{Fei Qi}

\maketitle

\begin{abstract}
We study the covariant derivatives of an eigenfunction for the Laplace-Beltrami operator on a complete, connected Riemannian manifold with nonzero constant sectional curvature. We show that along every parallel tensor, the covariant derivative is a scalar multiple of the eigenfunction. We also show that the scalar is a polynomial depending on the eigenvalue and prove some properties. A conjecture motivated by the study of vertex algebraic structure on space forms is also announced, suggesting the existence of interesting structures in these polynomials that awaits further exploration. 
\end{abstract}

\section{Introduction}

The study is motivated by Yi-Zhi Huang's construction of a meromorphic open-string vertex algebra (MOSVA hereafter) and its modules over a Riemannian manifold in \cite{H-MOSVA-Riemann}. Roughly speaking, a MOSVA is an algebraic structure of vertex operators that are associative, but not necessarily commutative (see \cite{H-MOSVA}, \cite{Q-Mod} for more details). To give a rough description on Huang's construction, let 
\begin{itemize}
    \item $M$ be a Riemannian manifold; 
    \item $TM$ be the tangent bundle with Levi-Civita connection;
    \item $TM^\C$ be the complexified tangent bundle $\C \otimes_\R TM$ with the natural connection; 
    \item $(TM^\C)^{\otimes r}$ be the tensor product bundle of $TM^\C$ of degree $r$; 
    \item $\Ten(TM^\C)$ be the tensor algebra bundle, i.e., $\Ten(TM^\C) = \oplus_{r=0}^\infty (TM^\C)^{\otimes r}$;
    \item $\Pi((TM^{\C})^{\otimes r})$ be the space of parallel $r$-tensors, i.e., parallel sections of the tensor bundle $(TM^\C)^{\otimes r}$ with respect to the natural connection; 
    \item $\Pi(\Ten(TM^\C))$ be the space of all parallel tensors, i.e., parallel sections of the tensor algebra bundle $\Ten(TM^\C)$ with respect to the natural connection. 
    \item $C^\infty(U)_\C$ be the space of complex-valued smooth functions defined on an open subset $U$ of $M$
\end{itemize}
In \cite{H-MOSVA-Riemann}, Huang constructed a MOSVA on the space of parallel sections of certain affinized bundle of $TM^{\C}$. On the space $C^\infty(U)_\C$, using the Levi-Civita connection $\nabla$ on $M$, Huang defined an action of $\Pi(\Ten(TM^\C))$ by
$$\psi_U(X)f = (\nabla^m f)(X)$$
for every $X \in \Pi((TM^{\C})^{\otimes m})$ of degree $m$. In other words, $\psi_U(X)f$ is the degree-$m$ covariant derivative of $f$ along $X$. Huang also showed that
\begin{align}
    \psi_U(X\otimes Y) = \psi_U(X)\psi_U(Y) \label{Assoc}
\end{align}
for every $X, Y \in \Pi(\Ten(TM^\C))$. In other words, the space $C^\infty(U)_\C$ is a module for the associative algebra $\Pi(\Ten(TM^\C))$. An induced module construction can then performed, giving a module for the MOSVA. 

Of particular interest are the submodules generated by an eigenfunction for the Laplace-Beltrami operator. Huang showed that the Laplace-Beltrami operator appears as a component of some vertex operator in the MOSVA he constructed. Thus starting from an eigenfunction $f$ of the Laplace-Beltrami operator, we can induce the $\Pi(\Ten(TM^\C))$-submodule of $C^\infty(U)_\C$ generated by $f$ to a module for the MOSVA. As eigenfunctions can be understood as quantum states in quantum mechanics, the modules they generate can be understood as the string-theoretic excitement to the quantum states. It is Huang's idea that the modules for the MOSVA generated by the eigenfunctions and the yet-to-be-defined intertwining operators among these modules may lead to a mathematical construction of the quantum two-dimensional nonlinear $\sigma$-model. 

Therefore, to understand the module for the MOSVA generated by an eigenfunction $f$, the first step is to understand the $\Pi(\Ten(TM^\C))$-submodule of $C^\infty(U)_\C$ generated by $f$, which is simply the space spanned by the covariant derivatives of $f$ along all parallel sections. In \cite{Q-MOSVA-2d}, the author studied the example of MOSVA and its eigenfunction modules for a two-dimensional orientable, complete, connected Riemannian manifold whose sectional curvature is constant and nonzero (or for short, a two-dimensional orientable space form with nonzero curvature), and was surprised to find that all such covariant derivatives are scalar multiples of the function $f$. In other words, the $\Pi(\Ten(TM^\C))$-submodule in $C^\infty(U)_\C$ generated by $f$ is simply the one-dimensional $\C f$. Moreover, the scalar is a polynomial depending on the eigenvalue of $f$. Using properties of the polynomials, we discovered that the irreducible modules for the MOSVA generated by eigenfunctions with eigenvalues $\lambda = Kp(p+1) (p = 0, 1, 2, ...)$ differ from those with generic eigenvalues. 

This paper serves as the first step of higher dimensional generalization of results in \cite{Q-MOSVA-2d}. We show that over higher-dimensional orientable and non-orientable space forms with nonzero curvature, every covariant derivative of an eigenfunction along a parallel tensor is a scalar multiple of the function. We also prove that the scalar is a polynomial in eigenvalues and discussed some combinatorial properties of these polynomials. Since $O(n, \R)$ and $SO(n, \R)$ are non-abelian, their invariant theories are more complicated than that for the commutative $SO(2, \R)$. So are proofs in this paper. 

It is also expected that irreducible modules generated by eigenfunctions of eigenvalue $Kp(p+n-1) (p=0, 1, 2, ...)$ are different and possess similar properties as in \cite{Q-MOSVA-2d}. However, our limited understanding of these polynomials obstructs the study. We summarize the obstruction as a conjecture, which suggests the existence of interesting structures in these polynomials that requires further exploration. 

This paper is organized as follow: 

In Section 2, we discuss the holonomy group of the tensor bundles $(TM^\C)^{\otimes r}$ and the tensor algebra bundle $\Ten(TM^\C)$. The discussion reduces the problem of finding parallel tensors to the invariant theory of $O(n,\R)$ and $SO(n, \R)$. We then use the results in \cite{LZ} and \cite{GW} to give a spanning set of the space $\Pi((TM^\C)^{\otimes r})$, thus characterizing the space $\Pi(\Ten(TM^\C))$. 

In Section 3, we discuss the fundamental lemma of covariant derivatives to be used in this work. The lemma was proved in \cite{Q-MOSVA-2d}. The proof is repeated here for the convenience of readers. 

In Section 4, using the fundamental lemma of covariant derivatives extensively, we give a proof to the main result. In particular, for orientable space forms, the results in Section 2 gives two different types of parallel tensors for orientable space forms, one type is $O(n, \R)$-invariant, and the other is not. We show that only those $O(n, \R)$-invariant ones can have nonzero actions. All parallel tensors that are not $O(n, \R)$-invariant annihilates $f$. 

In Section 5, we encode the $O(n, \R)$-invariant tensors by certain words and graphs. We derive a recursion to compute the scalar, which is a polynomial depending on the eigenvalue. We exhibit these polynomials for all parallel tensors of degrees 2, 4, and 6, together with the proof of some combinatorial properties. 

In Section 6, we announce the conjecture that obstructs the study of the MOSVAs and modules on higher-dimensional space forms. A linking operator is introduced on the graphs representing the parallel tensor. For the special graph whose polynomials has the ``largest'' highest degree component, we consider the linear combination of polynomials obtained from all possible ways of linking the right half of the graph. The coefficients are determined by a linear system, consisting of vanishing condition obtained from linking the left half of all the graphs. We conjecture that the polynomial is a polynomial that vanishes when the eigenvalues are $Kp(p+n-1) (p=0, 1, 2, ...)$. Numerical evidence and explanation of motivations are also provided.

\textbf{Acknowledgements.} The author would like to thank Yi-Zhi Huang for his long-term support. The author would also like to thank Roe Goodman and Nolan Wallach for discussions on invariant theory, and Johannes Flake for bringing the author's attention to the work \cite{LZ}. 


\section{Parallel tensors}

Let $M$ be a $n$-dimensional Riemannian manifold with constant sectional curvature $K$. For convenience, we assume $M$ is connected and complete. We will also focus on the case $K\neq 0$ and $n\geq 2$. 


\subsection{The curvature tensor} \label{R-tensor}
Fix $p\in U$, let $\{e_1,... e_n\}$ be an orthonormal basis of $T_p M$. Then for some neighborhood $U$ of $p$, let $X_1, ..., X_n: U \to TM$ be local sections such that $X_i|_p = e_i$ and for every $q\in U$, $(X_i|_q, X_j|_q) = \delta_{ij}$. For convenience, we will not distinguish the tangent vectors at a point and the sections over an open set when there is no confusion. 

Since the section curvature is constant and equal to $K$, for every $q\in U$ and every $v_1, v_2, v_3\in T_qM$, we have
$$R(v_1, v_2)v_3 = -K (g(v_1, v_3)v_2 - g(v_2, v_3)v_1)$$
(See \cite{P}, Proposition 3.1.3). In particular, for mutually distinct $i,j,k$, we have
\begin{align*}
   R(X_i, X_j)X_k &= 0, R(X_i, X_j)X_i &= -K X_j, R(X_i, X_j)X_j &= K X_i. 
\end{align*}

Regarded as a linear endomorphism on $T_qM$, the matrix of $R(X_i, X_j)$ with respect to the basis $\{X_1, ..., X_n\}$ is the skew-symmetric matrix $KE_{ij} - KE_{ji}$, where $E_{ab}$ is $n\times n$-matrix with $(a,b)$-entry being one, and all other entries being zero. In the case $K\neq 0$, the subspace spanned by the matrices of $R(X_i, X_j)$ in $\End(T_q M)$ coincides with the subspace spanned by the skew-symmetric matrices, which is precisely the Lie algebra of $SO(n, \mathbb{R})$. 

\subsection{Holonomy of the tangent bundle}

Recall that the holonomy group of a bundle $E$ based at a point $p\in M$ is the subgroup generated by all the parallel translations along piecewise smooth loops based on $p$. To determine the holonomy group of $TM$, we will use the following version of Ambrose-Singer theorem over vector bundles. 

\begin{lemma}[\cite{J}, Theorem 2.4.3(a)]\label{ASThm}
Let $M$ be a manifold, $E$ a vector bundle over $M$, and $\nabla$ a connection
on $E$. Fix $p \in M$, so that $\mathfrak{hol}_p(TM)$ is a Lie subalgebra of $\End(T_p M)$. Then
$\mathfrak{hol}_p(TM)$ is the vector subspace of $\End(T_p M)$ spanned by all elements of $\End(T_p M)$ of
the form $P_\gamma^{-1}[R(v, w)]P_\gamma$ where $R$ is the curvature tensor, $p \in M$ is a point, $\gamma : [0, 1] \to M$ is piecewise
smooth with $\gamma(0) = p$ and $\gamma(1) = q$, $P_\gamma : T_p M \to T_qM$ is the parallel translation
map, and $v,w \in T_q M$.
\end{lemma}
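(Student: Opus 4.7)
The plan is to prove both inclusions. Write $\mathfrak{h} \subseteq \End(T_p M)$ for the vector subspace spanned by all elements of the form $P_\gamma^{-1}[R(v,w)]P_\gamma$ appearing in the statement; the task is to show $\mathfrak{h} = \mathfrak{hol}_p(TM)$.

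For the inclusion $\mathfrak{h} \subseteq \mathfrak{hol}_p(TM)$, I would use the standard ``lasso loop'' argument. Fix a piecewise-smooth path $\gamma$ from $p$ to $q$ and vectors $v, w \in T_q M$. Extend $v, w$ to commuting vector fields $V, W$ in a neighborhood of $q$, and form the two-parameter family of loops $\sigma_{s,t}$ based at $p$ that travels to $q$ along $\gamma$, traces a small curvilinear parallelogram of side-lengths $s$ and $t$ along the flows of $V$ and $W$, and returns along $\gamma^{-1}$. Each parallel transport $P_{\sigma_{s,t}}$ lies in $\Hol_p(TM)$, and a classical calculation differentiating parallel transport around the parallelogram twice yields
\[
\left.\frac{\partial^2}{\partial s\,\partial t}\right|_{s=t=0} P_{\sigma_{s,t}} \;=\; P_\gamma^{-1}\,[R(v,w)]\,P_\gamma.
\]
Since $\Hol_p(TM)$ is a Lie subgroup of $GL(T_pM)$ by Yamabe's theorem on arcwise-connected subgroups, this tangent vector belongs to $\mathfrak{hol}_p(TM)$, giving the inclusion.

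For the reverse inclusion, I would pass to the frame bundle $FM$ and argue by Frobenius. First, $\mathfrak{h}$ is Ad-invariant under $\Hol_p(TM)$: for any loop $\eta$ based at $p$, the conjugate $P_\eta \cdot P_\gamma^{-1}\,[R(v,w)]\,P_\gamma \cdot P_\eta^{-1}$ equals $P_{\eta \cdot \gamma}^{-1}\,[R(v,w)]\,P_{\eta\cdot \gamma}$, which is again a generator of $\mathfrak{h}$. Consider on (the holonomy reduction of) $FM$ the distribution consisting of all horizontal vectors together with the vertical vectors corresponding to $\mathfrak{h}$. The second Bianchi identity, combined with the Ad-invariance just established, implies this distribution is involutive, so the Frobenius theorem produces a maximal integral submanifold whose horizontal parallel transports realize every element of the restricted holonomy group, forcing $\mathfrak{hol}_p(TM) \subseteq \mathfrak{h}$.

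The main obstacle is this second inclusion: translating algebraic Ad-invariance and the Bianchi identity into involutivity of a distribution on the principal bundle requires the reduction-of-structure-group machinery and careful bookkeeping of the holonomy subbundle. An efficient alternative is to invoke the original Ambrose--Singer theorem on principal bundles and specialize to $FM$ equipped with the Levi-Civita connection, which is precisely the route taken in \cite{J}.
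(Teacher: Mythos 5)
The paper does not prove this lemma at all: it is quoted verbatim as Theorem 2.4.3(a) of Joyce's book \cite{J} and used as a black box, so there is no in-paper argument to compare against. Your sketch is the standard textbook proof of the Ambrose--Singer theorem and is essentially sound in outline: the ``lasso loop'' computation for $\mathfrak{h} \subseteq \mathfrak{hol}_p$ (using Yamabe's theorem to know the holonomy group is a Lie subgroup) and the Frobenius argument on the holonomy subbundle of the frame bundle for the reverse inclusion are exactly the two halves of the classical proof. One technical correction: the involutivity of the distribution (horizontal vectors plus vertical vectors valued in $\mathfrak{h}$) does not come from the second Bianchi identity. It comes from Cartan's structure equation: for horizontal vector fields $X,Y$ one has $\omega([X,Y]) = -\Omega(X,Y)$, and the right-hand side lies in $\mathfrak{h}$ precisely because $\mathfrak{h}$ is defined to contain all curvature values along the holonomy subbundle; the Ad-invariance you establish is what makes $\mathfrak{h}$ a well-defined subalgebra fibrewise. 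With that substitution your outline matches the proof in Kobayashi--Nomizu and in \cite{J}, and your closing remark --- that one may simply invoke the principal-bundle version and specialize to the frame bundle --- is in effect what the paper does by citing the result.
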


\begin{lemma}
For every $p\in M$, the holonomy group $\Hol_p(TM)$ of the tangent bundle $TM$ is 
$$\Hol_p(TM) =\left\{ \begin{array}{ll}
SO(n, \R) & \text{ if }M\text{ is orientable,}\\
O(n, \R) & \text{ if }M\text{ is non-orientable. }\\
\end{array}\right.$$
\end{lemma}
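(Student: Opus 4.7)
The plan is a three-step argument: (i) constrain $\Hol_p(TM)$ inside $O(n,\R)$ using the metric compatibility of the Levi-Civita connection, (ii) apply Lemma~\ref{ASThm} together with the curvature computation of Subsection~\ref{R-tensor} to identify the holonomy Lie algebra as all of $\mathfrak{so}(n,\R)$, and (iii) distinguish the orientable and non-orientable cases via the classical equivalence between orientability of $M$ and the inclusion $\Hol_p(TM)\subseteq SO(n,\R)$.

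For step (i), since the Levi-Civita connection is compatible with the metric, parallel transport $P_\gamma$ along every piecewise-smooth path is a linear isometry; identifying $T_pM$ with $\R^n$ via the orthonormal basis $\{e_1,\ldots,e_n\}$ therefore places $\Hol_p(TM)$ inside $O(n,\R)$ and $\mathfrak{hol}_p(TM)$ inside $\mathfrak{so}(n,\R)$. For step (ii), Lemma~\ref{ASThm} describes $\mathfrak{hol}_p(TM)$ as the span of operators of the form $P_\gamma^{-1}R(v,w)P_\gamma$. Already the choice $q=p$ with $\gamma$ the constant loop suffices: the conjugation disappears, and Subsection~\ref{R-tensor} shows that the operators $R(e_i,e_j)$ with $1\le i<j\le n$ form a basis for the space of skew-symmetric endomorphisms of $T_pM$ when $K\neq 0$. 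Combined with the upper bound from step (i), this yields $\mathfrak{hol}_p(TM)=\mathfrak{so}(n,\R)$, so the identity component of $\Hol_p(TM)$ is all of $SO(n,\R)$.

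For step (iii), I invoke the standard fact that a connected Riemannian manifold $M$ is orientable if and only if every parallel translation along a loop at $p$ has determinant $+1$, i.e., $\Hol_p(TM)\subseteq SO(n,\R)$. If $M$ is orientable, combining this with $SO(n,\R)\subseteq \Hol_p(TM)$ from step (ii) gives $\Hol_p(TM)=SO(n,\R)$. If $M$ is non-orientable, there exists a loop $\gamma$ based at $p$ with $\det P_\gamma=-1$, and then $\Hol_p(TM)$ contains both $SO(n,\R)$ and an orientation-reversing isometry while still sitting inside $O(n,\R)$, forcing $\Hol_p(TM)=O(n,\R)$.

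The main obstacle, such as it is, lies in step (iii): one must justify the orientability equivalence. In one direction, an orientation on $M$ reduces the orthonormal frame bundle to a principal $SO(n,\R)$-subbundle preserved by the Levi-Civita connection, so the holonomy cannot escape $SO(n,\R)$; in the other direction, if $\Hol_p(TM)\subseteq SO(n,\R)$, then parallel transport of a chosen orientation on $T_pM$ is independent of the path and furnishes a consistent global orientation on $M$. Neither direction is difficult, but stating them precisely is what distinguishes this part of the argument from the purely infinitesimal content provided by Ambrose-Singer.
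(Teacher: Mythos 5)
Your proposal is correct and follows essentially the same route as the paper: Ambrose--Singer with the constant loop plus the curvature computation of Subsection~\ref{R-tensor} to get $SO(n,\R)\subseteq \Hol_p(TM)$, then the orientability criterion $\Hol_p(TM)\subseteq SO(n,\R)$ to split the two cases. You additionally make explicit the upper bound $\Hol_p(TM)\subseteq O(n,\R)$ from metric compatibility, which the paper leaves implicit but which is indeed needed to pin down the non-orientable case.
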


\begin{proof}
From Lemma \ref{ASThm} with $\gamma(t)=p$ being the trivial loop, we see that the Lie algebra $\mathfrak{hol}_p(TM)$ contains all $R(v,w)$ for $v,w\in T_pM$. It follows from the discussion in \ref{R-tensor} that $\mathfrak{hol}_p(TM)$ contains the Lie algebra of $SO(n,\mathbb{R})$. Thus $\Hol_p(TM) \supset SO(n,\mathbb{R})$. The conclusion then follows from the fact that $M$ is orientable if and only if $\Hol_p(TM) \subset SO(n,\R)$ (see \cite{P}). 
\end{proof}

\begin{lemma}
For every $p\in M$, the holonomy group $\Hol_p(TM^\C)$ of the complexified tangent bundle $TM$ is 
$$\Hol_p(TM^\C) =\left\{ \begin{array}{ll}
SO(n, \R) & \text{ if }M\text{ is orientable,}\\
O(n, \R) & \text{ if }M\text{ is non-orientable. }\\
\end{array}\right.$$
\end{lemma}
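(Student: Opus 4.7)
The plan is to show that the holonomy of $TM^{\C}$ is nothing more than the $\C$-linear extension of the holonomy of $TM$, under the natural identification with real matrices.

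First, I would recall the construction. The complexified bundle $TM^{\C} = \C\otimes_{\R} TM$ has fibers $T_pM^{\C} = \C\otimes_{\R} T_pM$, and the natural connection $\nabla^{\C}$ on $TM^{\C}$ is defined by $\C$-linear extension of $\nabla$, i.e.\ $\nabla^{\C}_V(z\otimes X) = z\otimes \nabla_V X$ for $z\in\C$ and a real vector field $X$. From the definition of parallel transport as the unique solution to the ODE $\nabla^{\C}_{\dot\gamma(t)} s(t) = 0$, it follows immediately that if $s(t) = 1\otimes X(t) + i\otimes Y(t)$ with $X(t), Y(t)$ real, then $s$ is $\nabla^{\C}$-parallel along $\gamma$ if and only if both $X(t)$ and $Y(t)$ are $\nabla$-parallel along $\gamma$. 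Hence the parallel transport map $P_\gamma^{\C}: T_pM^{\C} \to T_qM^{\C}$ satisfies
\[
P_\gamma^{\C} = \mathrm{id}_{\C}\otimes P_\gamma,
\]
where $P_\gamma: T_pM \to T_qM$ is the real parallel transport.

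Next, fixing an orthonormal basis $\{e_1,\dots,e_n\}$ of $T_pM$, the same set (viewed via $e_i \mapsto 1\otimes e_i$) is a $\C$-basis of $T_pM^{\C}$. With respect to these bases, the matrix of $P_\gamma^{\C}$ for a loop $\gamma$ at $p$ coincides with the real orthogonal matrix representing $P_\gamma$; in particular it lies in $O(n,\R)\subset GL(n,\C)$, and is in $SO(n,\R)$ exactly when $P_\gamma\in SO(n,\R)$. Thus, under this identification, $\Hol_p(TM^{\C})$ sits inside $GL(T_pM^{\C})$ as exactly the same set of matrices as $\Hol_p(TM)\subset GL(T_pM)$.

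The conclusion then follows by applying the previous lemma, which identifies $\Hol_p(TM)$ as $SO(n,\R)$ or $O(n,\R)$ depending on orientability. No real obstacle is expected; the only subtle point is making precise the identification between $\Hol_p(TM^{\C})$, a priori a subgroup of $GL(T_pM^{\C})$, and its incarnation as a subgroup of $O(n,\R)$, which is handled by choosing the basis above and observing that complexification of a real orthogonal operator is represented by the same matrix.
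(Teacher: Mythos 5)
Your proposal is correct and is essentially an expanded version of the paper's own (one-line) proof, which invokes the decomposition $TM^\C = TM \oplus \sqrt{-1}TM$ to identify the complexified parallel transport with the real one acting on both summands. Your explicit verification that $P_\gamma^{\C} = \mathrm{id}_{\C}\otimes P_\gamma$ and the basis identification fills in exactly the details the paper leaves implicit.
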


\begin{proof}
This essentially follows from the fact that as a bundle, $TM^\C = TM \oplus \sqrt{-1}TM$. 
\end{proof}

For every $r\in \Z_+$, let $\Pi((TM^\C)^{\otimes r})$ be the space of parallel sections of the tensor bundle $(TM^\C)^{\otimes r}$. For convenience, elements of $\Pi((TM^\C)^{\otimes r})$ will simply be called parallel tensors. It is well-known that $\Pi((TM^\C)^{\otimes r})$ can be identified with the fixed point subspace $((T_pM^\C)^{\otimes r})^{\Hol_p((TM^\C)^{\otimes r})}$ in $(T_pM^\C)^{\otimes r}$. We start by explicitly determining the holonomy group of $(TM^\C)^{\otimes r}$. 

\begin{lemma}\label{HolTMr} There is a natural surjective homomorphism $\Hol_p(TM^\C) \to \Hol_p((TM^\C)^{\otimes r})$ of holonomy groups, where $g\in \Hol_p(TM^\C)$ is mapped to $g^{\otimes r}: (T_pM^\C)^{\otimes r} \to (T_pM^\C)^{\otimes r}$ defined by 
$$(g^{\otimes r})(v_1\otimes \cdots \otimes v_r) = gv_1\otimes \cdots \otimes gv_r.$$
for $v_1, ..., v_r\in T_p M$. A tensor $X\in (T_pM^\C)^{\otimes r}$ is fixed by every element in $\Hol_p((TM^\C)^{\otimes r})$, if and only if $g^{\otimes r}X = X$ for every $g\in \Hol_p(TM^\C)$. 
\end{lemma}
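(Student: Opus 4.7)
The plan is to identify parallel transport in the tensor bundle with the tensor power of parallel transport in the complexified tangent bundle, and then deduce both claims formally.

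First I would recall that the natural connection $\nabla^{\otimes r}$ on $(TM^\C)^{\otimes r}$ is defined by extending the Levi-Civita connection $\nabla$ on $TM^\C$ by the Leibniz rule: for local sections $Y_1, \ldots, Y_r$ of $TM^\C$ and a vector field $Z$,
\[
\nabla^{\otimes r}_Z (Y_1 \otimes \cdots \otimes Y_r) = \sum_{i=1}^r Y_1 \otimes \cdots \otimes \nabla_Z Y_i \otimes \cdots \otimes Y_r.
\]
The immediate consequence is that if $v_1(t), \ldots, v_r(t)$ are parallel sections of $TM^\C$ along a curve $\gamma:[0,1]\to M$, then the pointwise tensor product $v_1(t) \otimes \cdots \otimes v_r(t)$ is parallel along $\gamma$ for $\nabla^{\otimes r}$. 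Hence, writing $P_\gamma: T_pM^\C \to T_{\gamma(1)}M^\C$ for the parallel transport on $TM^\C$, the parallel transport on $(TM^\C)^{\otimes r}$ along $\gamma$ is exactly $P_\gamma^{\otimes r}$.

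Applying this to a piecewise smooth loop based at $p$, every generator of $\Hol_p((TM^\C)^{\otimes r})$ has the form $P_\gamma^{\otimes r}$ where $P_\gamma$ is the corresponding generator of $\Hol_p(TM^\C)$. The map $g \mapsto g^{\otimes r}$ is a group homomorphism because $(gh)^{\otimes r} = g^{\otimes r} h^{\otimes r}$, and its image contains a generating set of $\Hol_p((TM^\C)^{\otimes r})$, hence it is surjective.

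For the second statement, the ``only if'' direction is immediate since each $g^{\otimes r}$ lies in $\Hol_p((TM^\C)^{\otimes r})$. Conversely, if $g^{\otimes r}X = X$ for every $g\in \Hol_p(TM^\C)$, then by surjectivity every element of $\Hol_p((TM^\C)^{\otimes r})$ fixes $X$. The only step requiring actual care is verifying that the induced tensor product connection has parallel transport equal to the tensor power of the base parallel transport; this follows from the uniqueness of solutions to the parallel transport ODE, since the tensor product of parallel sections satisfies $\nabla^{\otimes r}_{\dot\gamma}(v_1\otimes\cdots\otimes v_r) = 0$ by the Leibniz formula. Once this is in place, the rest of the lemma is bookkeeping about group homomorphisms.
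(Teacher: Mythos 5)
Your proposal is correct and follows essentially the same route as the paper: both identify the parallel transport on $(TM^\C)^{\otimes r}$ with the $r$-th tensor power of the parallel transport on $TM^\C$ via the Leibniz-rule definition of the induced connection, and then read off the surjective homomorphism and the fixed-point characterization. Your added remark about uniqueness of solutions of the parallel transport ODE just makes explicit a step the paper leaves implicit.
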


\begin{proof}
For any piecewise smooth path $\gamma: [0, 1]\to M$ with $\gamma(0)=p$, let $P_{\gamma(t)}: T_p M \to T_{\gamma(1)}M$ be the parallel transport along $\gamma$ on the bundle $E$; let  $P_{\gamma(t)}^r: (T_pM^\C)^{\otimes r} \to T_{\gamma(1)}M^{\otimes r}$ be the parallel transport along $\gamma$ with respect to the bundle $(TM^\C)^{\otimes r}$. Then 
from the definition of the connection on $(TM^\C)^{\otimes r}$: 
$$\nabla(X_1 \otimes \cdots \otimes X_n) = \sum_{i=1}^n X_1 \otimes \cdots \otimes \nabla(X_i) \otimes \cdots \otimes X_n, $$
it follows that
$$P_{\gamma(t)}^r(v_1\otimes \cdots \otimes v_r) = P_{\gamma(t)}v_1 \otimes \cdots \otimes P_{\gamma(t)}v_r.$$
In the case that $\gamma(t)$ is a loop based at $p$, this essentially realizes every element of $\Hol_p((TM^\C)^{\otimes r})$ as $g^{\otimes r}$ for $g\in \Hol_p(TM^\C)$. So the map $g\mapsto g^{\otimes r}$ gives a natural surjective homomorphism  $\Hol_p(TM^\C) \to \Hol_p((TM^\C)^{\otimes r})$. The second conclusion follows directly from this realization. 
\end{proof}

Therefore, to identify $\Pi((TM^\C)^{\otimes r})$, it suffices to identify $((T_pM^\C)^{\otimes r})^{SO(n, \R)}$ if $M$ is orientable; $((T_pM^\C)^{\otimes r})^{O(n, \R)}$ if $M$ is non-orientable.


\subsection{Parallel tensors} We now use the first fundamental theorem of invariant theory of orthogonal groups to give a spanning set of the space of parallel tensors. We will state the theorem of $O(n,\C)$ and $SO(n, \C)$, then apply Weyl's unitary trick to reduce to $O(n, \R)$ and $SO(n,\R)$. 

For every integer $r$, we denote the symmetric group of $\{1, ..., r\}$ by $\text{Sym}_r$, which acts naturally on $(T_pM^\C)^{\otimes r}$ by permutation: 
$$\sigma(v_1\otimes\cdots\otimes v_r) = v_{\sigma^{-1}(1)}\otimes \cdots \otimes v_{\sigma^{-1}(r)}. $$
Consider now the tensors 
\begin{align*}
    \theta &= \sum_{i=1}^n X_i \otimes X_i\\
    \Lambda &= \sum_{\sigma\in \text{Sym}_n} (-1)^\sigma X_{\sigma(1)}\otimes \cdots \otimes X_{\sigma(n)}
\end{align*}
Roughly speaking, $\theta$ corresponds to the metric form; $\Lambda$ corresponds to the volume form.

\begin{lemma}[\cite{GW}, Theorem 5.3.3]\label{3-9}
For every $r\in \Z_+$,  
$$((T_pM^\C)^{\otimes r})^{O(n, \C)} = \left\{\begin{array}{ll} 
\text{span}_\C\{\sigma_{r}(\theta^{\otimes k}), \sigma_{r}\in \text{Sym}_{r}\} & \text{if $r=2k$ is even,}\\
0 & \text{otherwise.}
\end{array}\right.$$
\end{lemma}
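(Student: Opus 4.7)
The plan is to reduce the statement to the First Fundamental Theorem (FFT) of invariant theory for $O(n,\C)$ in its classical multilinear form, and then identify the resulting spanning set with the one stated in the lemma. The odd case is handled first by a simple parity argument: the central element $-I \in O(n, \C)$ acts on $(T_pM^\C)^{\otimes r}$ as multiplication by $(-1)^r$, so when $r$ is odd every $O(n,\C)$-invariant tensor must vanish. This gives the zero claim.

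For $r = 2k$ even, I would use the $O(n,\C)$-equivariant isomorphism $T_pM^\C \cong (T_pM^\C)^*$ induced by the non-degenerate symmetric bilinear form $g$, extended $\C$-bilinearly from the Riemannian metric. This yields an $O(n,\C)$-equivariant identification of $(T_pM^\C)^{\otimes 2k}$ with the space of $2k$-multilinear forms on $T_pM^\C$, under which $\theta = \sum_i X_i \otimes X_i$ corresponds to $g$. The classical FFT for $O(n,\C)$ then asserts that the space of invariant $2k$-multilinear forms is spanned by products $g(v_{i_1}, v_{j_1}) \cdots g(v_{i_k}, v_{j_k})$ indexed by perfect matchings $\{\{i_\ell, j_\ell\}\}_{\ell = 1}^k$ of $\{1, \ldots, 2k\}$.

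Translating back via the above identification, each perfect matching $\mu$ corresponds to the invariant tensor $\theta_\mu \in (T_pM^\C)^{\otimes 2k}$ obtained by inserting copies of $\theta$ into the matched tensor positions. Any permutation $\sigma \in \text{Sym}_{2k}$ that sends the standard pairing $\{\{1,2\}, \{3,4\}, \ldots, \{2k-1,2k\}\}$ to the matching $\mu$ satisfies $\sigma(\theta^{\otimes k}) = \theta_\mu$, and conversely every $\sigma(\theta^{\otimes k})$ arises as some $\theta_\mu$. Hence the invariant subspace equals $\text{span}_\C\{\sigma(\theta^{\otimes k}) : \sigma \in \text{Sym}_{2k}\}$, as claimed.

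The main obstacle is the FFT for $O(n, \C)$ itself, whose proof requires nontrivial machinery, such as the Capelli identity, Weyl's unitarian trick combined with integration over the compact form $O(n, \R)$, or highest weight arguments for the orthogonal Lie algebra. Since that theorem is available from the cited source, the remainder is the bookkeeping above, converting the matching-indexed spanning vectors into the $\text{Sym}_{2k}$-orbit spanning vectors; a minor subtlety worth checking is that the map from permutations to matchings is surjective (each matching $\mu$ is hit by $2^k k!$ permutations corresponding to swapping entries within a pair and reordering the $k$ pairs), which ensures no invariant is missed by the orbit.
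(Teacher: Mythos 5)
The paper states this lemma as a direct quotation of \cite{GW}, Theorem 5.3.3 and supplies no proof of its own, so there is nothing to diverge from: your argument simply reduces the statement to that same cited first fundamental theorem, and the two pieces you add on top of it --- the parity argument via $-I\in O(n,\C)$ for odd $r$, and the identification of the matching-indexed invariants $\theta_\mu$ with the $\text{Sym}_{2k}$-orbit of $\theta^{\otimes k}$ (using transitivity of $\text{Sym}_{2k}$ on perfect matchings) --- are both correct. This is essentially the same approach as the paper's, with the routine translation written out.
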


\begin{lemma}[\cite{LZ}, Theorem 2.1]\label{3-10}
For every $r\in \Z_+$, the space $((T_pM^\C)^{\otimes r})^{SO(n, \C)}$ can be decomposed as
$$((T_pM^\C)^{\otimes r})^{O(n,\C)} \oplus ((T_pM^\C)^{\otimes r})^{O(n, \C),\det}$$
where $((T_pM^\C)^{\otimes r})^{O(n,\C)}$ is defined as in Lemma \ref{3-9}, and 
\begin{align*}
((T_pM^\C)^{\otimes r})^{O(n,\C), \text{det}} &= \left\{\begin{array}{ll}
\text{span}_\C\{\sigma_{r}  (\theta^{\otimes k} \otimes \Lambda), \sigma_r \in \text{Sym}_r \}) & \text{if $r = n+2k\geq 0$ is even, }\\
0 & \text{otherwise.}
\end{array}\right. 
\end{align*}
\end{lemma}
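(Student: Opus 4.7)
The plan is to first identify the direct sum as the $\pm 1$ eigenspace decomposition for the two-element quotient $O(n,\C)/SO(n,\C)$ acting on the $SO(n,\C)$-invariants, then verify that the elements $\sigma_r(\theta^{\otimes k}\otimes\Lambda)$ lie in the $\det$-isotypic summand, and finally deduce that they span it using the first fundamental theorem for $O(n,\C)$ (Lemma~\ref{3-9}) together with a contraction-by-$\Lambda^*$ argument.

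For the direct sum, fix a reflection $s\in O(n,\C)\setminus SO(n,\C)$. Because $s^2=1$ and the field is $\C$, the operator $s$ acts as an involution on $V:=((T_pM^\C)^{\otimes r})^{SO(n,\C)}$ and splits it into its $\pm 1$ eigenspaces; since $s$ generates $O(n,\C)/SO(n,\C)$ with $\det(s)=-1$, the $+1$ eigenspace coincides with $((T_pM^\C)^{\otimes r})^{O(n,\C)}$ and the $-1$ eigenspace is precisely $((T_pM^\C)^{\otimes r})^{O(n,\C),\det}$. The containment $\supseteq$ in the description of the latter is immediate: $\theta$ corresponds to the canonical element of $\End(T_pM^\C)$ under the metric identification and so is $O(n,\C)$-invariant, while $\Lambda$ is the volume form and satisfies $g^{\otimes n}\Lambda=\det(g)\Lambda$ for every $g\in O(n,\C)$; hence every $\sigma_r(\theta^{\otimes k}\otimes\Lambda)$ is $SO(n,\C)$-invariant and transforms by $\det$ under $O(n,\C)$.

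For the reverse inclusion I would use the $O(n,\C)$-equivariant injection
\[
\mu\colon ((T_pM^\C)^{\otimes r})^{O(n,\C),\det}\hookrightarrow ((T_pM^\C)^{\otimes(r+n)})^{O(n,\C)},\qquad w\mapsto w\otimes\Lambda,
\]
which is well defined because $\det\otimes\det$ is trivial. By Lemma~\ref{3-9} the codomain is spanned by permutations of $\theta^{\otimes(r+n)/2}$, which forces $r+n$ to be even, i.e.\ $r=n+2k$ for some $k\geq 0$. To invert $\mu$ one contracts with the dual volume form $\Lambda^*\in((T_pM^\C)^*)^{\otimes n}$ in the last $n$ slots; since $\langle\Lambda,\Lambda\rangle=n!$, this contraction yields $n!\,w$ when applied to $\mu(w)$. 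Applied to a generic permutation $\sigma(\theta^{\otimes(r+n)/2})$ from the codomain's spanning set, the same contraction either vanishes---precisely when some $\theta$-factor has both of its slots inside the last $n$ positions, by antisymmetry of $\Lambda^*$ against the symmetry of $\theta$'s paired indices---or reduces to a permutation of $\theta^{\otimes k}\otimes\Lambda$, the copy of $\Lambda$ being assembled from the $n$ half-contracted $\theta$-factors. The main obstacle is the combinatorial bookkeeping at this last step: one must track which pairings survive, and verify that after collecting the signs coming from $\Lambda^*$ the surviving contractions reproduce every permutation of $\theta^{\otimes k}\otimes\Lambda$ with the correct multiplicity; once this is done, the spanning property follows and the decomposition is complete.
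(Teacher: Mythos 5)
The paper does not prove this lemma at all: it is stated as a quotation of Lehrer--Zhang (\cite{LZ}, Theorem 2.1), so there is no internal argument to compare against. Your proposal is a genuine, self-contained derivation of the $SO(n,\C)$ statement from the $O(n,\C)$ statement (Lemma \ref{3-9}), and it is essentially the standard route taken in the invariant-theory literature. The three steps are all sound: the $\pm1$-eigenspace splitting of the $SO(n,\C)$-invariants under a fixed reflection $s$ is legitimate (any two elements of $O(n,\C)\setminus SO(n,\C)$ act identically on the $SO(n,\C)$-invariants since they differ by an element of $SO(n,\C)$, so the splitting is canonical, and the $-1$-eigenspace is exactly the $\det$-isotypic part); the containment $\supseteq$ is immediate from $g^{\otimes n}\Lambda=\det(g)\Lambda$; and the contraction argument correctly forces $r\equiv n\pmod 2$ and, via the pigeonhole observation that some $\theta$-factor must sit entirely in the last $n$ slots when $r<n$, also forces $r\geq n$. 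One remark: the ``main obstacle'' you flag at the end is lighter than you fear. For the spanning claim you only need that the contraction of each $\sigma(\theta^{\otimes(r+n)/2})$ lands \emph{in} the span of $\{\sigma_r(\theta^{\otimes k}\otimes\Lambda)\}$ --- each nonzero contraction is a single signed permutation of $\theta^{\otimes k}\otimes\Lambda$, because the copy of $\Lambda$ assembled from the half-contracted $\theta$-factors is totally antisymmetric, so reordering its slots only changes an overall sign. You do not need to verify that every permutation of $\theta^{\otimes k}\otimes\Lambda$ is produced, nor to control multiplicities; the identity $C(\mu(w))=n!\,w$ already gives $w$ as a linear combination of the surviving contractions. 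With that simplification your sketch closes into a complete proof.
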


\begin{prop}
$((T_pM^\C)^{\otimes r})^{SO(n, \mathbb{R})} = ((T_pM^\C)^{\otimes r})^{SO(n, \mathbb{C})}$
\end{prop}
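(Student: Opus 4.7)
The inclusion $((T_pM^\C)^{\otimes r})^{SO(n, \mathbb{C})} \subseteq ((T_pM^\C)^{\otimes r})^{SO(n, \mathbb{R})}$ is immediate, since $SO(n,\R)$ is a subgroup of $SO(n,\C)$ (viewing $SO(n,\C)$ as acting on $T_pM^\C$ in the natural way after choosing the basis $X_1,\dots,X_n$). So the real content is the reverse inclusion, which is exactly what Weyl's unitary trick delivers. My plan is to carry this out at the infinitesimal level, which is the cleanest way.

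First I would identify $T_p M^\C$ with $\C^n$ via the orthonormal basis $X_1,\dots,X_n$, so that both $SO(n,\R)$ and $SO(n,\C)$ act on $(T_pM^\C)^{\otimes r}$ by the $r$-th tensor power of the defining representation, and record the standard fact that $\mathfrak{so}(n,\C) = \mathfrak{so}(n,\R) \otimes_\R \C$ as complex Lie algebras, i.e., every skew-symmetric complex matrix decomposes uniquely as $A + \sqrt{-1} B$ with $A, B \in \mathfrak{so}(n,\R)$. The derived action of $\mathfrak{so}(n,\C)$ on $(T_pM^\C)^{\otimes r}$ is by $\C$-linear endomorphisms, and it extends the derived action of $\mathfrak{so}(n,\R)$ $\C$-linearly, precisely because tensoring with $\C$ preserves the Leibniz-type formula for the action on a tensor product.

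Next, suppose $X \in ((T_pM^\C)^{\otimes r})^{SO(n,\R)}$. Since $SO(n,\R)$ is a connected Lie group, this is equivalent to $A \cdot X = 0$ for every $A \in \mathfrak{so}(n,\R)$, where the dot denotes the derived action. Writing an arbitrary element of $\mathfrak{so}(n,\C)$ as $A + \sqrt{-1} B$ with $A, B \in \mathfrak{so}(n,\R)$, the $\C$-linearity of the action gives $(A + \sqrt{-1} B) \cdot X = A\cdot X + \sqrt{-1}(B\cdot X) = 0$. Hence $X$ is annihilated by all of $\mathfrak{so}(n,\C)$.

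Finally, I would invoke that $SO(n,\C)$ is connected (for $n \geq 2$), so that the exponential map's image generates the whole group. Therefore $\exp(Z) \cdot X = X$ for every $Z \in \mathfrak{so}(n,\C)$, and thus $g \cdot X = X$ for every $g \in SO(n,\C)$, giving $X \in ((T_pM^\C)^{\otimes r})^{SO(n,\C)}$. The only step requiring any care is the compatibility of the $\R$-linear and $\C$-linear derived actions on the tensor power, but this is routine since the action on $(T_pM^\C)^{\otimes r}$ is defined by the same Leibniz formula regardless of whether one restricts scalars to $\R$ or $\C$; the connectedness of $SO(n,\C)$ can be quoted from standard references on complex classical groups.
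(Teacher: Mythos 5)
Your argument is correct and is essentially the same as the paper's: both prove the nontrivial inclusion by passing to the Lie algebra, using $\mathfrak{so}(n,\C) = \mathfrak{so}(n,\R)\otimes_\R\C$ and connectedness to go back to the group. Your write-up simply makes explicit the connectedness and exponential-map steps that the paper leaves implicit.
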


\begin{proof}
Since $SO(n, \R)\subset SO(n, \C)$, the left-hand-side contains the right-hand-side. A standard application of the unitary trick shows that that the left-hand-side is included in the right-hand-side. In greater detail, let $X$ be an element in the left-hand-side. Then $X$ is annihilated by every element in the Lie algebra $\mathfrak{so}(n, \R)$. Since $\mathfrak{so}(n, \C) = \mathfrak{so}(n, \R)\otimes_\R \C$, $X$ is also annihilated by every element in the Lie algebra $\mathfrak{so}(n, \C)$. Thus $X$ is fixed by every element in $SO(n, \mathbb{C})$.
\end{proof}

Roughly speaking, the parallel tensors are generated by applying the permutations to $\theta^k$ and $\theta^k \cdot \Lambda$. For convenience, the permutations of $\theta^k$ will be called $O(n,\R)$-invariant tensors, and the permutations of $\theta^k\otimes \Lambda$ will be called non-$O(n,\R)$-invariant tensors.






\section{Fundamental lemma of covariant derivatives}


\begin{thm}\label{Comm-Cov}
Let $f: U\to \C$ be a complex-valued smooth function. Then for $n \geq 3$, we have
$$(\nabla^n f)(Z_1, ..., Z_{n-1}, Z_n ) - (\nabla^n f)(Z_1, ..., Z_n, Z_{n-1}) = 0,$$
and for $i = 1, ..., n-2$,
\begin{align*}
    & (\nabla^n f)(Z_1, ..., Z_i, Z_{i+1}, ..., Z_n) - (\nabla^n f)(Z_1, ..., Z_{i+1}, Z_{i}, ..., Z_n)\\
    = & \sum_{j=i+2}^n(\nabla^{n-2} f)(Z_1, ..., -R(Z_i, Z_{i+1})Z_{j}, ..., Z_n) \\
    = & (\nabla^{n-2} f)(Z_1, ..., -R(Z_i, Z_{i+1})Z_{i+2}, Z_{i+3} ..., Z_n) \\
    & + (\nabla^{n-2} f)(Z_1, ..., Z_{i+2}, -R(Z_i, Z_{i+1})Z_{i+3}, ..., Z_n) + \\
    & + \cdots \cdots \\
    & + (\nabla^{n-2} f)(Z_1, ..., Z_{i+2}, Z_{i+3}, ..., -R(Z_i, Z_{i+1})Z_n)
\end{align*}
\end{thm}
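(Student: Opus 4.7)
The first assertion is the $i = n - 1$ case of the second's formula, since then the summation range $\{i + 2, \ldots, n\}$ is empty. The plan is therefore to prove a unified identity valid for all $i \in \{1, \ldots, n - 1\}$ by establishing the Ricci identity at a suitable inner level of covariant derivatives and then propagating it through the outer ones.

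First I would establish the Ricci identity for any $(0, k)$-tensor $T$:
\[
(\nabla^2 T)(X, Y, V_1, \ldots, V_k) - (\nabla^2 T)(Y, X, V_1, \ldots, V_k) = -\sum_{j=1}^{k} T(V_1, \ldots, R(X, Y) V_j, \ldots, V_k).
\]
This follows from the formula $(\nabla^2 T)(X, Y, \cdot) = (\nabla_X \nabla_Y T)(\cdot) - (\nabla_{\nabla_X Y} T)(\cdot)$, the identity $R(X, Y) = \nabla_X \nabla_Y - \nabla_Y \nabla_X - \nabla_{[X, Y]}$, and the derivation action of $R(X, Y)$ on covariant tensors. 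Applying it with $T = \nabla^{n - i - 1} f$ and $k = n - i - 1$ yields a tensor identity for $\nabla^{n - i + 1} f$ in which the first two arguments are swapped.

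Next I would apply $\nabla^{i - 1}$ to both sides of this tensor identity. The key observation for the left-hand side is that covariant differentiation commutes with permutations of evaluation arguments: $\nabla(U \circ \sigma) = (\nabla U) \circ \tilde \sigma$, where $\tilde \sigma$ fixes the newly inserted differentiation slot and acts as $\sigma$ on the original slots. Iterating produces $(\nabla^n f)(Z_1, \ldots, Z_n) - (\nabla^n f)(Z_1, \ldots, Z_{i - 1}, Z_{i + 1}, Z_i, Z_{i + 2}, \ldots, Z_n)$, the left-hand side of the claim. For the right-hand side, letting $T_j'(X, Y, V_1, \ldots, V_k) = -(\nabla^{n - i - 1} f)(V_1, \ldots, R(X, Y) V_j, \ldots, V_k)$, the key lemma I would prove is that in a space form (where $\nabla R = 0$),
\[
\nabla_{X_0} T_j' = (\nabla_{X_0}(\nabla^{n - i - 1} f))_j',
\]
where $(\cdot)_j'$ denotes the insertion of $R(X, Y)$ at position $j$. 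Expanding $\nabla_{X_0} T_j'$ by the Leibniz rule, the differentiation of the curvature factor $R(X, Y) V_j$ produces the four terms $(\nabla_{X_0} R)(X, Y) V_j + R(\nabla_{X_0} X, Y) V_j + R(X, \nabla_{X_0} Y) V_j + R(X, Y) \nabla_{X_0} V_j$; the last three are cancelled exactly by the Leibniz correction terms coming from the $X$, $Y$, and $V_j$ slots of $T_j'$, while the first vanishes because $\nabla R = 0$. Iterating gives $\nabla^{i - 1} T_j' = (\nabla^{n - 2} f)_j'$; reindexing $j' = i + 1 + j$ and substituting $X = Z_i$, $Y = Z_{i + 1}$, $V_\ell = Z_{i + 1 + \ell}$ recovers the sum on the right.

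The hard part will be the bookkeeping in the key lemma. Tracking each Leibniz correction term of $\nabla_{X_0} T_j'$ and matching it with its cancellation partner requires care, and the constant sectional curvature hypothesis enters precisely at the residual $(\nabla_{X_0} R)(X, Y) V_j$ term, which vanishes by $\nabla R = 0$. Without this hypothesis, the right-hand side would acquire additional terms involving derivatives of $R$; the clean form of the stated identity is specific to manifolds with parallel curvature tensor.
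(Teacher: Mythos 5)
Your proposal is correct and is essentially the paper's argument in cleaner packaging: the paper's base case ($i=1$, proved by expanding $\nabla^n f=\nabla^2(\nabla^{n-2}f)$ and cancelling terms) is exactly your Ricci identity, and its induction on $i$, which peels off one outer covariant derivative at a time until the swapped slots sit in the first two positions, is precisely the iteration of your key commutation lemma. The one substantive point you add is to isolate explicitly the hypothesis $\nabla R=0$ in the propagation step; the paper uses this silently, since the seven-term expression in its inductive step can only equal the claimed right-hand side because the $(\nabla_{Z_1}R)(Z_i,Z_{i+1})Z_j$ contribution from the Leibniz rule vanishes on a space form.
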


\begin{proof}
We prove the first equation by induction on $n$. For $n=3$, we have
\begin{align*}
    (\nabla^3 f)(Z_1, Z_2, Z_3) &= (\nabla_{Z_1} (\nabla^2 f))(Z_2, Z_3) \\
    & = \nabla_{Z_1} ((\nabla^2 f)(Z_2, Z_3)) - (\nabla^2 f)(\nabla_{Z_1}Z_2, Z_3) - (\nabla^2 f)(Z_2, \nabla_{Z_1}Z_3) \\
    & \quad \text{(note that $\nabla^2 f(X, Y) = \nabla^2 f(Y, X)$)} \\
    & = \nabla_{Z_1} ((\nabla^2 f)(Z_3, Z_2)) - (\nabla^2 f)(Z_3, \nabla_{Z_1}Z_2) - (\nabla^2 f)(\nabla_{Z_1}Z_3, Z_2)  = (\nabla^3 f)(Z_1, Z_3, Z_2)
\end{align*}
Assume the equation holds for $n-1$, so we have
\begin{align*}
    (\nabla^n f)(Z_1, ..., Z_{n-1}, Z_n) & = (\nabla_{Z_1}(\nabla^{n-1}f))(Z_2, ..., Z_{n-1}, Z_n) \\ 
    & = \nabla_{Z_1}((\nabla^{n-1}f)(Z_2, ..., Z_{n-1}, Z_n)) - (\nabla^{n-1}f)(\nabla_{Z_1}Z_2, ..., Z_{n-1}, Z_n) \\
    & \quad -\cdots - (\nabla^{n-1}f)(Z_2, ..., \nabla_{Z_1}Z_{n-1}, Z_n) - (\nabla^{n-1}f)(Z_2, ..., Z_{n-1}, \nabla_{Z_1}Z_n)\\
    & \quad \text{(by induction hypothesis)}\\
    & = \nabla_{Z_1}((\nabla^{n-1}f)(Z_2, ..., Z_n, Z_{n-1})) - (\nabla^{n-1}f)(\nabla_{Z_1}Z_2, ..., Z_n, Z_{n-1}) \\
    & \quad -\cdots - (\nabla^{n-1}f)(Z_2, ..., Z_n, \nabla_{Z_1}Z_{n-1}) - (\nabla^{n-1}f)(Z_2, ..., \nabla_{Z_1}Z_n, Z_{n-1})\\
    & = (\nabla^n f)(Z_1, ..., Z_n, Z_{n-1})
\end{align*}
So the first equation is proved. 

For the second equation, we first consider the case $i=1$:
\begin{align}
    &\quad (\nabla^n f)(Z_1, Z_2, Z_3, \cdots, Z_n)= (\nabla_{Z_1} (\nabla^{n-1}f))(Z_2, Z_3, \cdots, Z_n)\nonumber\\
    &= \nabla_{Z_1}((\nabla^{n-1}f) (Z_2, Z_3 ..., Z_n)) - (\nabla^{n-1}f)(\nabla_{Z_1}Z_2, Z_3, ..., Z_n) - \sum_{j=3}^n (\nabla^{n-1}f)(Z_2, ..., \nabla_{Z_1}Z_j, ..., Z_n) \nonumber\\
    &= \nabla_{Z_1}\nabla_{Z_2}((\nabla^{n-2}f) (Z_3, ..., Z_n)) - \sum_{j=3}^n \nabla_{Z_1}((\nabla^{n-2}f)(Z_3, ..., \nabla_{Z_2} Z_j, ..., Z_n))\label{Line1}\\
    & \quad -\nabla_{\nabla_{Z_1}Z_2} ((\nabla^{n-2}f)(Z_3, ..., Z_n))+\sum_{j=3}^n(\nabla^{n-2}f)(Z_3,..., \nabla_{\nabla_{Z_1}Z_2}Z_j, ..., Z_n)\label{Line2}\\
    & \quad 
    - \sum_{j=3}^n \left(\nabla_{Z_2}(\nabla^{n-2}f)(Z_3, ..., \nabla_{Z_1}Z_j, ..., Z_n) - \sum_{k=3}^{j-1}(\nabla^{n-2}f)(Z_3,..., \nabla_{Z_2}Z_k, ..., \nabla_{Z_1}Z_j, ..., Z_n)\right) \label{Line3} \\
    & \quad - \sum_{j=3}^n \left(-(\nabla^{n-2}f)(Z_3, ..., \nabla_{Z_2}\nabla_{Z_1}Z_j, ..., Z_n)-\sum_{k=j+1}^{n} (\nabla^{n-2}f)(Z_3, ..., \nabla_{Z_1} Z_j , ..., \nabla_{Z_2}Z_k, ..., Z_n)\right)\label{Line4}
\end{align}
Similarly, 
\begin{align}
    &\quad (\nabla^n f)(Z_2, Z_1, Z_3, \cdots, Z_n)= (\nabla_{Z_2} (\nabla^{n-1}f))(Z_1, Z_3, \cdots, Z_n)\nonumber\\
    & = \nabla_{Z_2}\nabla_{Z_1}((\nabla^{n-2}f) (Z_3, ..., Z_n)) - \sum_{j=3}^n \nabla_{Z_2}((\nabla^{n-2}f)(Z_3, ..., \nabla_{Z_1} Z_j, ..., Z_n))\label{Line5}\\
    & \quad -\nabla_{\nabla_{Z_2}Z_1} ((\nabla^{n-2}f)(Z_3, ..., Z_n))+\sum_{j=3}^n(\nabla^{n-2}f)(Z_3,..., \nabla_{\nabla_{Z_2}Z_1}Z_j, ..., Z_n)\label{Line6}\\
& \quad 
    - \sum_{j=3}^n \left(\nabla_{Z_1}(\nabla^{n-2}f)(Z_3, ..., \nabla_{Z_2}Z_j, ..., Z_n) - \sum_{k=3}^{j-1}(\nabla^{n-2}f)(Z_3,..., \nabla_{Z_1}Z_k, ..., \nabla_{Z_2}Z_j, ..., Z_n)\right)  \label{Line7}\\
    & \quad - \sum_{j=3}^n \left(-(\nabla^{n-2}f)(Z_3, ..., \nabla_{Z_1}\nabla_{Z_2}Z_j, ..., Z_n)-\sum_{k=j+1}^{n} (\nabla^{n-2}f)(Z_3, ..., \nabla_{Z_2} Z_j , ..., \nabla_{Z_1}Z_k, ..., Z_n)\right)\label{Line8}
\end{align}
Then in the difference, the second sum in (\ref{Line1}) cancels out with the first term in the sum of (\ref{Line7}); the first term in the sum of (\ref{Line3}) cancels out with the second sum in (\ref{Line5}); the second term in the sum of (\ref{Line3}), together with second term in the sum of (\ref{Line4}), cancel out those in (\ref{Line7}) and (\ref{Line8}). So the difference is
\begin{align*}
 & \quad (\nabla^n f)(Z_1, Z_2, Z_3, ..., Z_n) - (\nabla^n f)(Z_2, Z_1, Z_3, ..., Z_n)\\
    & = (\nabla_{Z_1}\nabla_{Z_2}-\nabla_{Z_2}\nabla_{Z_1})((\nabla^{n-2}f)(Z_3, ..., Z_n)) - \nabla_{\nabla_{Z_1}Z_2 - \nabla_{Z_2}Z_1} ((\nabla^{n-2}f)(Z_3, ..., Z_n) \\
    & \quad+ \sum_{j=3}^n (\nabla^{n-2}f)(Z_3, ..., \nabla_{\nabla_{Z_1}Z_2 -\nabla_{Z_2}Z_1} Z_j, ..., Z_n) + \sum_{j=3}^n (\nabla^{n-2}f)(Z_3, ..., (\nabla_{Z_2}\nabla_{Z_1}-\nabla_{Z_1}\nabla_{Z_2})Z_j, ..., Z_n)\\
    & = \sum_{j=3}^n (\nabla^{n-2}f)(Z_3, ..., (\nabla_{Z_2}\nabla_{Z_1}-\nabla_{Z_1}\nabla_{Z_2}+ \nabla_{\nabla_{Z_1}Z_2 - \nabla_{Z_2}Z_1})Z_j, ..., Z_n)\\
    &=\sum_{j=3}^n (\nabla^{n-2}f)(Z_3, ..., -R(Z_1, Z_2)Z_j, ..., Z_n).
\end{align*}
So the case $i=1$ is proved for arbitrary $n$. 

We proceed by induction on $i$. The base case has been proved above. Now we proceed with the inductive step. 
\begin{align*}
    & \quad (\nabla^n f)(Z_1, ..., Z_i, Z_{i+1}, ..., Z_n) = (\nabla_{Z_1}(\nabla^n f))(Z_2, ..., Z_i, Z_{i+1}, ..., Z_n) \\
    &= \nabla_{Z_1}((\nabla^{n-1} f)(Z_2, ..., Z_i, Z_{i+1}, ..., Z_n)) - \sum_{k=2}^{i-1}(\nabla^{n-1} f)(Z_2, ..., \nabla_{Z_1}Z_k, ..., Z_i, Z_{i+1}, ..., Z_n) \\
    & \quad - (\nabla^{n-1} f)(Z_2, ..., \nabla_{Z_1} Z_i, Z_{i+1}, ..., Z_n) - (\nabla^{n-1} f)(Z_2, ..., Z_i, \nabla_{Z_1} Z_{i+1}, ..., Z_n) \\
    & \quad - \sum_{k=i+2}^n  (\nabla^{n-1} f)(Z_2, ..., Z_i, Z_{i+1}, ..., \nabla_{Z_1}Z_k, ..., Z_n)
\end{align*}
Similarly, 
\begin{align*}
    & \quad (\nabla^n f)(Z_1, ..., Z_{i+1}, Z_i, ..., Z_n) = (\nabla_{Z_1}(\nabla^n f))(Z_2, ..., Z_{i+1}, Z_i, ..., Z_n) \\
    &= \nabla_{Z_1}((\nabla^{n-1} f)(Z_2, ..., Z_{i+1}, Z_i, ..., Z_n)) - \sum_{k=2}^{i-1}(\nabla^{n-1} f)(Z_2, ..., \nabla_{Z_1}Z_k, ..., Z_{i+1}, Z_i, ..., Z_n) \\
    & \quad - (\nabla^{n-1} f)(Z_2, ..., \nabla_{Z_1} Z_{i+1}, Z_i, ..., Z_n) - (\nabla^{n-1} f)(Z_2, ..., Z_{i+1}, \nabla_{Z_1} Z_i, ..., Z_n) \\
    & \quad - \sum_{k=i+2}^n  (\nabla^{n-1} f)(Z_2, ..., Z_{i+1}, Z_i, ..., \nabla_{Z_1}Z_k, ..., Z_n)
\end{align*}
We use the induction hypothesis to see that the difference is expressed as
\begin{align*}
    & \nabla_{Z_1}\left(\sum_{j=i+2}^n (\nabla^{n-3}f)(Z_2, ..., -R(Z_i, Z_{i+1})Z_j, ..., Z_n)  \right) \\ 
    & - \sum_{j=i+2}^n\sum_{k=2}^{i-1}(\nabla^{n-3}f)(Z_2, ..., \nabla_{Z_1}Z_k, ..., -R(Z_i, Z_{i+1})Z_j,..., Z_n) \\
    & - \sum_{j=i+2}^n(\nabla^{n-3}f)(Z_2, ..., -R(\nabla_{Z_1}Z_i, Z_{i+1})Z_j, ..., Z_n)\\
    & - \sum_{j=i+2}^n(\nabla^{n-3}f)(Z_2, ..., -R(Z_i, \nabla_{Z_1}Z_{i+1})Z_j, ..., Z_n)\\
    & -  \sum_{k=i+2}^n\sum_{j=i+2}^{k-1} (\nabla^{n-3}f)(Z_2, ..., -R(Z_i, Z_{i+1})Z_j, ..., \nabla_{Z_1}Z_k, ..., Z_n) \\
    & -\sum_{k=i+2}^n (\nabla^{n-3}f)(Z_2, ..., Z_{i+2},..., -R(Z_i, Z_{i+1})\nabla_{Z_1}Z_k, ..., Z_n)\\ 
    & -  \sum_{k=i+2}^n\sum_{j=k+1}^n(\nabla^{n-3}f)(Z_2, ..., Z_{i+2},..., \nabla_{Z_1}Z_k, ..., -R(Z_i, Z_{i+1})Z_j, ..., Z_n)
\end{align*}
which is equal to the right-hand-side. 
\end{proof}


\section{Covariant derivatives of an eigenfunction along parallel tensors}

\subsection{Terminologies and Notations}
Let $f\in C^\infty(U)_\C$ be an eigenfunction for the Laplace-Beltrami operator, i.e., 
$$\Delta f = -\lambda f. $$
We now compute the covariant derivative of $f$ along parallel tensors. By Lemma \ref{3-10}, it suffices to consider the covariant derivatives of $f$ along the $O(n, \R)$-invariant tensors and non-$O(n,\R)$-invariant tensors. 

To avoid the clumsy double-subscript, we use $\left|i\right\rangle$ to denote the vector field $X_i$. The tensor field $X_{i_1} \otimes \cdots \otimes X_{i_r}$ will be denoted by $\state{i_1 \cdots i_r}$, as well as $\state{i_1\cdots i_j}$ $\cdot \state{i_{j+1}\cdots i_r}$ and  $\state{i_1\cdots i_j}\state{i_{j+1}\cdots i_r}$, for any $j = 1,..., r-1$.

With the new notation,  
$$\theta^{\otimes k} = \left(\sum_{i=1}^n \left|ii\right\rangle\right)^{\otimes k} = \sum_{a_1=a_2=1}^n \cdots \sum_{a_{2k-1}=a_{2k}=1}^n \state{a_1a_2\cdots a_{2k-1}a_{2k}}.$$
For any $\sigma \in \text{Sym}_{2k}$, 
$$\sigma(\theta^{\otimes k}) = \sum_{a_1=a_2=1}^n \cdots \sum_{a_{2k-1}=a_{2k}=1}^n \state{a_{\sigma^{-1}(1)}\cdots a_{\sigma^{-1}(2k)}}.$$

\subsection{Along the $O(n,\R)$-invariant tensors}

\begin{thm}\label{4-2}
For any $\sigma \in \text{Sym}_{2k}$, $(\nabla^{2k} f)(\sigma(\theta^{\otimes k})) \in \C f$
\end{thm}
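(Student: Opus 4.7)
The plan is a double induction: outer induction on $k$, and for each $k$, inner induction on the length $\ell(\sigma)$ (number of inversions) of $\sigma$. The outer base case $k=1$ is immediate since $\theta$ is symmetric, so $\sigma(\theta) = \theta$ for every $\sigma \in \mathrm{Sym}_2$, and $(\nabla^2 f)(\theta) = \sum_{i=1}^n (\nabla^2 f)(X_i, X_i) = \Delta f = -\lambda f \in \mathbb{C}f$.

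For the outer inductive step at degree $2k$, I would first treat the inner base case $\sigma = \mathrm{id}$ by invoking the associativity identity (\ref{Assoc}) from the introduction. By definition of $\psi_U$, we have $\psi_U(\theta)g = \sum_{i=1}^n(\nabla^2 g)(X_i, X_i) = \Delta g$ for every smooth $g$, so $\psi_U(\theta)$ coincides with the Laplace-Beltrami operator. Iterating (\ref{Assoc}) gives $\psi_U(\theta^{\otimes k}) = \psi_U(\theta)^k = \Delta^k$, whence $(\nabla^{2k}f)(\theta^{\otimes k}) = \Delta^k f = (-\lambda)^k f \in \mathbb{C}f$.

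For the inner inductive step, suppose $\sigma \neq \mathrm{id}$ and choose an index $i$ with $\sigma^{-1}(i) > \sigma^{-1}(i+1)$, so that $\sigma' := \tau_i \sigma$ (with $\tau_i = (i,i+1)$) satisfies $\ell(\sigma') = \ell(\sigma) - 1$. Swapping the arguments at positions $i, i+1$ in $(\nabla^{2k}f)(\sigma(\theta^{\otimes k}))$ turns it into $(\nabla^{2k}f)(\sigma'(\theta^{\otimes k}))$, so the second equation of Theorem \ref{Comm-Cov} gives
\begin{align*}
(\nabla^{2k}f)(\sigma(\theta^{\otimes k})) - (\nabla^{2k}f)(\sigma'(\theta^{\otimes k})) = \sum_{j=i+2}^{2k}\sum_{a_1,\ldots,a_k=1}^n (\nabla^{2k-2}f)\bigl(\ldots,\, -R(Y_i, Y_{i+1})Y_j,\, \ldots\bigr),
\end{align*}
where $Y_l$ denotes the basis vector field at position $l$ in the expansion of $\sigma(\theta^{\otimes k})$. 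Substituting the constant-curvature formula $-R(X_a, X_b)X_c = K(\delta_{ac} X_b - \delta_{bc} X_a)$ (Section \ref{R-tensor}), each Kronecker delta contracts a pair of summation indices; combined with the deletion of positions $i, i+1$ and the substitution at position $j$, the matching of pairs encoded by $\sigma$ is modified into a new perfect matching of the remaining $2k-2$ positions. Thus each correction term is a scalar multiple of $(\nabla^{2k-2}f)(\tilde\sigma(\theta^{\otimes(k-1)}))$ for some $\tilde\sigma \in \mathrm{Sym}_{2k-2}$, which by the outer inductive hypothesis lies in $\mathbb{C}f$. Combined with the inner hypothesis applied to $\sigma'$, this completes the step.

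The principal obstacle is the bookkeeping: given the matching encoded by $\sigma$, I must describe explicitly which matching of $\{1,\ldots,2k\}\setminus\{i,i+1\}$ is produced by each of the two $R$-contractions and verify that the remaining summation variables label the new pairs correctly. Several degenerate configurations need separate treatment --- if $i$ and $i+1$ are already paired under $\sigma$ then $Y_i = Y_{i+1}$ in every summand and the correction vanishes by antisymmetry of $R$; if $j$ happens to be the $\sigma$-partner of $i$ or of $i+1$, one Kronecker delta becomes redundant and contributes an extra factor of $n$ from the free summation over the now-disappeared variable. In every case the output is still a scalar multiple of a permutation of $\theta^{\otimes(k-1)}$, so the outer inductive hypothesis applies.
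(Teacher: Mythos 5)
Your proposal is correct and follows essentially the same route as the paper: induct on $k$, anchor at $(\nabla^{2k}f)(\theta^{\otimes k})=(-\lambda)^k f$, and reduce a general $\sigma$ to the identity one adjacent transposition at a time, using the second identity of Theorem \ref{Comm-Cov} together with the constant-curvature formula to show each correction term is a scalar multiple of $(\nabla^{2k-2}f)(\tilde\sigma(\theta^{\otimes(k-1)}))$ and hence lies in $\C f$ by the degree-$(k-1)$ hypothesis (your inner induction on inversion number is just the paper's telescoping over a reduced word, and the degenerate configurations you flag are exactly the paper's Cases 1 and 2.1--2.3, where the only quibble is that the contracted-pair case yields a factor $n-1$ rather than $n$, which is immaterial for membership in $\C f$).
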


\begin{proof}
The proof is by induction. In the case $k=1$, any $\sigma\in \text{Sym}_2$ stabilizes $\theta$ and $\nabla^{2}f (\theta) = \Delta f = -\lambda f$. Now we assume that the conclusion holds for degree $k-1$ and argue for $k$. Based on the induction hypothesis, we first prove the following technical proposition. 
\begin{prop}\label{4-3}
For any $\sigma\in \text{Sym}_{2k}$ and any $i=1, ..., 2k-1$,  
$$(\nabla^{2k}f)(\sigma (\theta^{\otimes k})) - (\nabla^{2k}f)((i, i+1)\sigma(\theta^{\otimes k})) \in \C f.$$
\end{prop}

\noindent \textit{Proof of the Proposition.}
By definition, 
\begin{align*}
    \sigma(\theta^{\otimes k}) &= \sum_{a_1=a_2=1}^n \cdots \sum_{a_{2k-1}=a_{2k}=1}^n \state{a_{\sigma^{-1}(1)}\cdots a_{\sigma^{-1}(2k)}},\\
    (i, i+1)\sigma(\theta^{\otimes k}) &= \sum_{a_1=a_2=1}^n \cdots \sum_{a_{2k-1}=a_{2k}=1}^n \state{a_{\sigma^{-1}(1)}\cdots a_{\sigma^{-1}(i+1)}a_{\sigma^{-1}(i)}\cdots a_{\sigma^{-1}(2k)}}.
\end{align*}
In the case $i=2k-1$, we know from the first part of Theorem \ref{Comm-Cov} that \begin{align*}
    (\nabla^{2k} f)\left(\state{a_{\sigma^{-1}(1)}\cdots a_{\sigma^{-1}(2k-1)}a_{\sigma^{-1}(2k)}}\right) 
    =(\nabla^{2k} f)\left(\state{a_{\sigma^{-1}(1)}\cdots a_{\sigma^{-1}(2k)}a_{\sigma^{-1}(2k-1)}}\right).
\end{align*} 
Thus, the difference is a sum of zeros and the conclusion follows. For all other $i=1, 2,...,2k-2$, by the second part of Theorem \ref{Comm-Cov}, 
\begin{align}
    & \sum_{\substack{a_{2l-1}=a_{2l}=1\\
    l=1,...,k}}^n(\nabla^{2k} f)\left(\state{a_{\sigma^{-1}(1)}\cdots a_{\sigma^{-1}(i)}a_{\sigma^{-1}(i+1)} \cdots a_{\sigma^{-1}(2k)}}\right) \nonumber \\
    - & \sum_{\substack{a_{2l-1}=a_{2l}=1\\
    l=1,...,k}}^n (\nabla^{2k} f)\left(\state{a_{\sigma^{-1}(1)}\cdots a_{\sigma^{-1}(i+1)}a_{\sigma^{-1}(i)} \cdots  a_{\sigma^{-1}(2k)}}\right)\nonumber\\
    = & -\sum_{\substack{a_{2l-1}=a_{2l}=1\\
    l=1,...,k}}^n\sum_{j=i+2}^{2k}(\nabla^{2k-2}f)\bigg{(}\state{a_{\sigma_{-1}(1)} \cdots a_{\sigma^{-1}(i-1)} a_{\sigma^{-1}(i+2)} \cdots a_{\sigma_{-1}(j-1)}}\nonumber\\
    & \hskip 4.8cm \cdot\state{ R(a_{\sigma^{-1}(i)}a_{\sigma^{-1}(i+1)})a_{\sigma^{-1}(j)}}\state{a_{\sigma^{-1}(j+1)}\cdots a_{\sigma^{-1}(2k)}} \bigg{)}. \label{difference}
\end{align}  
The following cases arise with respect to the arrangement of $\sigma^{-1}(i)$ and $\sigma^{-1}(i+1)$: \\
\textbf{Case 1. } $\{\sigma^{-1}(i), \sigma^{-1}(i+1)\} = \{2s-1, 2s\}$ for some $s=1, ..., k$, then since $a_{2s-1}=a_{2s}$, the curvature tensor is constantly zero. So (\ref{difference}) is simply zero. \\
\textbf{Case 2. } $\sigma^{-1}(i)\in \{2s-1,2s\}, \sigma^{-1}(i+1) \in \{2t-1, 2t\}$ for different $s, t\in \{1, ..., k\}$. Without loss of generality, assume $\sigma^{-1}(i)=2s, \sigma^{-1}(i+1)=2t$. Then (\ref{difference}) becomes
\begin{align}
    -\sum_{\substack{a_{2l-1}=a_{2l}=1\\
    l=1,...,k}}^n\sum_{j=i+2}^{2k}(\nabla^{2k-2}f)\bigg{(}& \state{a_{\sigma_{-1}(1)} \cdots a_{\sigma^{-1}(i-1)} a_{\sigma^{-1}(i+2)} \cdots a_{\sigma_{-1}(j-1)}}\nonumber\\
    & \cdot\state{ R(a_{2s}a_{2t})a_{\sigma^{-1}(j)}}\state{a_{\sigma^{-1}(j+1)}\cdots a_{\sigma^{-1}(2k)}} \bigg{)}\label{difference-2}
\end{align}

There will be a few situations that requires independent treatment. \\
\textbf{Case 2.1. } Both $2s-1$ and $2t-1$ does not appear in $\{\sigma^{-1}(i+2), ..., \sigma^{-1}(2k)\}$. Then  (\ref{difference-2}) can be rewritten as
\begin{align*}
    & \quad \begin{aligned}-\sum_{\substack{a_{2l-1}=a_{2l}=1\\
    l=1,...,k}}^n\sum_{j=i+2}^{2k}(\nabla^{2k-2}f)\bigg{(}&\state{a_{\sigma_{-1}(1)} \cdots a_{2s-1} \cdots a_{2t-1} \cdots a_{\sigma^{-1}(i-1)} a_{\sigma^{-1}(i+2)} \cdots a_{\sigma_{-1}(j-1)}}\nonumber\\
    &\cdot\state{ R(a_{2s}a_{2t})a_{\sigma^{-1}(j)}}\state{a_{\sigma^{-1}(j+1)}\cdots a_{\sigma^{-1}(2k)}} \bigg{)}
    \end{aligned}\\
    & \begin{aligned}= -\sum_{j=i+2}^{2k}\sum_{\substack{a_{2l-1}=a_{2l}=1\\
    l=1,...,k\\ 
    l\neq s, l\neq t}}^n\sum_{u,v=1}^n(\nabla^{2k-2}f)\bigg{(}&\state{a_{\sigma_{-1}(1)} \cdots u \cdots v \cdots a_{\sigma^{-1}(i-1)} a_{\sigma^{-1}(i+2)} \cdots a_{\sigma_{-1}(j-1)}}\nonumber\\
    &\cdot\state{ R(uv)a_{\sigma^{-1}(j)}}\state{a_{\sigma^{-1}(j+1)}\cdots a_{\sigma^{-1}(2k)}} \bigg{)}.
    \end{aligned}
\end{align*}
    Here the order of  $a_{2s-1}$ and $a_{2t-1}$ is not important, as will be explained later. 
    
    Now for every fixed $j$,   $\sigma^{-1}(j) \in \{2m-1,2m\}$ for some $m\in \{1,..., k\}$. Without loss of generality,  assume $\sigma^{-1}(j)=2m$ and $2m-1$ appears in $\{\sigma^{-1}(1), ..., \sigma^{-1}(i-1)\}$. From the discussion in Section 2.1, 
    $$\state{R(u,v)a_{\sigma^{-1}(j)}} = \state{R(u,v)a_{2m}} = \left\{ \begin{array}{ll}
    -K\state{v} & \text{ if }a_{2m}=u,\\
    K\state{u} & \text{ if }a_{2m} = v,\\
    0 & \text{ otherwise.}
    \end{array}\right. $$
Thus the sum can be rewritten as 
\begin{align*}
    & \begin{aligned}\quad K\sum_{j=i+2}^{2k}\sum_{\substack{a_{2l-1}=a_{2l}=1\\
    l=1,...,k\\ 
    l\neq s, l\neq t, l\neq m}}^n\sum_{u,v=1}^n(\nabla^{2k-2}f)\bigg{(}&\state{a_{\sigma_{-1}(1)} \cdots u \cdots v \cdots u \cdots a_{\sigma^{-1}(i-1)} a_{\sigma^{-1}(i+2)} \cdots a_{\sigma_{-1}(j-1)}}\nonumber\\
    &\cdot\state{ v}\state{a_{\sigma^{-1}(j+1)}\cdots a_{\sigma^{-1}(2k)}} \bigg{)}
    \end{aligned}\\
    & \begin{aligned} -K\sum_{j=i+2}^{2k}\sum_{\substack{a_{2l-1}=a_{2l}=1\\
    l=1,...,k\\ 
    l\neq s, l\neq t, l\neq m}}^n\sum_{u,v=1}^n(\nabla^{2k-2}f)\bigg{(}&\state{a_{\sigma_{-1}(1)} \cdots u \cdots v \cdots v\cdots a_{\sigma^{-1}(i-1)} a_{\sigma^{-1}(i+2)} \cdots a_{\sigma_{-1}(j-1)}}\nonumber\\
    &\cdot\state{ u}\state{a_{\sigma^{-1}(j+1)}\cdots a_{\sigma^{-1}(2k)}} \bigg{)}
    \end{aligned}
\end{align*}    
    Writing $\theta^{\otimes (k-1)}$ as    $$\theta^{\otimes (k-1)} =  \sum_{\substack{a_{2l-1}=a_{2l}=1\\
    l\neq s,t,m}}^n \state{a_{1}a_{2} \cdots \widehat{a_{2s-1}a_{2s}}\cdots \widehat{a_{2t-1}a_{2t}} \cdots \widehat{a_{2m-1}a_{2m}} \cdots a_{2k-1}a_{2k}}\sum_{u,v=1}^n\state{uuvv}.$$
    we see that for each $j=i+2, ..., 2k$, the tensors
    \begin{align}
        \sum_{\substack{a_{2l-1}=a_{2l}=1\\
        l\neq s, t, m}}^n\sum_{u,v=1}^n \state{a_{\sigma_{-1}(1)} \cdots u \cdots v \cdots u \cdots a_{\sigma^{-1}(i-1)} a_{\sigma^{-1}(i+2)} \cdots  a_{\sigma_{-1}(j-1)} v a_{\sigma^{-1}(j+1)}\cdots a_{\sigma^{-1}(2k)}} \label{actingtensor-1} \\
        \sum_{\substack{a_{2l-1}=a_{2l}=1 \\
        l\neq s, t, m}}^n\sum_{u,v=1}^n \state{a_{\sigma_{-1}(1)} \cdots u \cdots v \cdots v \cdots a_{\sigma^{-1}(i-1)} a_{\sigma^{-1}(i+2)} \cdots  a_{\sigma_{-1}(j-1)} u a_{\sigma^{-1}(j+1)}\cdots a_{\sigma^{-1}(2k)}} \label{actingtensor-2} 
    \end{align}
    are both permutations of $\theta^{\otimes (k-1)}$. From the induction hypothesis of the theorem, their actions on $f$ via $\nabla^{2k-2}$ results in a scalar multiple of $f$. So summing up different $j$ results in a scalar multiple of $f$ as well. Thus the conclusion follows in this case.
    
    Now we explain why our assumptions above bring no loss of generality. 
    \begin{enumerate}
        \item If instead, $2m-1$ is in $\{\sigma^{-1}(i+2), ..., \sigma^{-1}(j-1)\}$ (or $\{\sigma^{-1}(j+1), ...,\sigma^{-1}(2k)\}$), then the corresponding tensors (\ref{actingtensor-1}) and (\ref{actingtensor-2}) are modified to  
    \begin{align*}
        \sum_{\substack{a_{2l-1}=a_{2l}=1\\
        l\neq s, t, m}}^n\sum_{u,v=1}^n \state{a_{\sigma_{-1}(1)} \cdots u \cdots v  \cdots a_{\sigma^{-1}(i-1)} a_{\sigma^{-1}(i+2)} \cdots u\cdots  a_{\sigma_{-1}(j-1)} v a_{\sigma^{-1}(j+1)}\cdots a_{\sigma^{-1}(2k)}} \\
        \sum_{\substack{a_{2l-1}=a_{2l}=1\\
        l\neq s, t, m}}^n\sum_{u,v=1}^n \state{a_{\sigma_{-1}(1)} \cdots u \cdots v  \cdots a_{\sigma^{-1}(i-1)} a_{\sigma^{-1}(i+2)} \cdots v\cdots  a_{\sigma_{-1}(j-1)} u a_{\sigma^{-1}(j+1)}\cdots a_{\sigma^{-1}(2k)}},
    \end{align*}
    or 
        \begin{align*}
        \sum_{\substack{a_{2l-1}=a_{2l}=1\\
        l\neq s, t, m}}^n\sum_{u,v=1}^n \state{a_{\sigma_{-1}(1)} \cdots u \cdots v  \cdots a_{\sigma^{-1}(i-1)} a_{\sigma^{-1}(i+2)} \cdots  a_{\sigma_{-1}(j-1)} v a_{\sigma^{-1}(j+1)}\cdots u\cdots a_{\sigma^{-1}(2k)}} \\
        \sum_{\substack{a_{2l-1}=a_{2l}=1\\
        l\neq s, t, m}}^n\sum_{u,v=1}^n \state{a_{\sigma_{-1}(1)} \cdots u \cdots v  \cdots a_{\sigma^{-1}(i-1)} a_{\sigma^{-1}(i+2)} \cdots  a_{\sigma_{-1}(j-1)} u a_{\sigma^{-1}(j+1)}\cdots v\cdots a_{\sigma^{-1}(2k)}},
    \end{align*}    
    all of which are permutations of $\theta^{\otimes k}$. 
    \item If instead, $\sigma^{-1}(j) = 2m-1$, it is easy for the reader to check that the process is indeed verbatim, as $a_{2m}=a_{2m-1}$. We will not elaborate the process here. 
    \item If the order of $a_{2s-1}$ and $a_{2t-1}$ is swapped, this amounts to be swapping the position of the first $u$ and $v$. The corresponding tensors (\ref{actingtensor-1}) and (\ref{actingtensor-2}) stay as permutations of $\theta^{\otimes k}$. 
    \item If $(\sigma^{-1}(i), \sigma^{-1}(i+1)) = (2s-1, 2t)$ (or $(2s, 2t-1)$, or $(2s-1, 2t-1)$, respectively), then with the assumption that both $2s$ and $2t-1$ (or $2s-1$ and $2t)$, or $2s$ and $2t)$, respectively) are sitting in $\{\sigma^{-1}(1), ..., \sigma^{-1}(i-1)\}$, it is also easy for the reader to find that the process repeats verbatim, as $a_{2s}=a_{2s-1}$ and $a_{2t}=a_{2t-1}$. We will not elaborate the process here.
    \end{enumerate}
    This comment on the generality applies to all the subcases discussed below and shall not be repeated henceforth.

\noindent\textbf{Case 2.2. } One of $2s-1, 2t-1$ appears in $\{\sigma^{-1}(i+2), ..., \sigma^{-1}(2k)\}$ but the other does not. Without loss of generality, say $2s-1 \in\{\sigma^{-1}(1), ..., \sigma^{-1}(i-1)\}$ and $2t-1 = \sigma^{-1}(j_0)$ for some $j_0\geq i+2$. Then (\ref{difference}) becomes 
\begin{align}
    & \begin{aligned}-\sum_{\substack{a_{2l-1}=a_{2l}=1\\
    l=1,...,k}}^n\sum_{j=i+2}^{j_0-1}(\nabla^{2k-2}f)\bigg{(}& \state{a_{\sigma_{-1}(1)}\cdots a_{2s-1} \cdots a_{\sigma^{-1}(i-1)} a_{\sigma^{-1}(i+2)} \cdots a_{\sigma_{-1}(j-1)}}\\
    & \cdot\state{ R(a_{2s}a_{2t})a_{\sigma^{-1}(j)}}\state{a_{\sigma^{-1}(j+1)}\cdots a_{2t-1} \cdots  a_{\sigma^{-1}(2k)}} \bigg{)}
    \end{aligned}\label{difference-2-1}\\
    & \begin{aligned}-\sum_{\substack{a_{2l-1}=a_{2l}=1\\
    l=1,...,k}}^n(\nabla^{2k-2}f)\bigg{(}& \state{a_{\sigma_{-1}(1)}\cdots a_{2s-1} \cdots a_{\sigma^{-1}(i-1)} a_{\sigma^{-1}(i+2)} \cdots a_{\sigma_{-1}(j-1)}}\\
    & \cdot\state{ R(a_{2s}a_{2t})a_{2t-1}}\state{a_{\sigma^{-1}(j+1)}\cdots \cdots  a_{\sigma^{-1}(2k)}} \bigg{)}
    \end{aligned}\label{difference-2-2}\\
    & \begin{aligned}-\sum_{\substack{a_{2l-1}=a_{2l}=1\\
    l=1,...,k}}^n\sum_{j=j_0+1}^{2k}(\nabla^{2k-2}f)\bigg{(}& \state{a_{\sigma_{-1}(1)}\cdots a_{2s-1} \cdots a_{\sigma^{-1}(i-1)} a_{\sigma^{-1}(i+2)} \cdots a_{2t-1} \cdots a_{\sigma_{-1}(j-1)}}\\
    & \cdot\state{ R(a_{2s}a_{2t})a_{\sigma^{-1}(j)}}\state{a_{\sigma^{-1}(j+1)} \cdots  a_{\sigma^{-1}(2k)}} \bigg{)}.
    \end{aligned} \label{difference-2-3}
\end{align}
The sums (\ref{difference-2-1}) and (\ref{difference-2-3}) can be similarly handled as in Case 2.1. For the sum (\ref{difference-2-2}), using the fact that $a_{2t}=a_{2t-1}$, we have $$\state{R(a_{2s}a_{2t})a_{2t-1}} =\left\{\begin{array}{ll}
K\state{a_{2s}} & \text{ if }a_{2t} = a_{2t-1}\neq a_{2s},\\
0 & \text{ if }a_{2t}=a_{2t-1}=a_{2s}. \end{array}\right. $$ 
Thus (\ref{difference-2-2}) becomes 
\begin{align*}
-K \sum_{\substack{a_{2l-1}=a_{2l}=1\\
    l=1,...,k\\
    a_{2s}\neq a_{2t}}}^n(\nabla^{2k-2}f)\bigg{(}& \state{a_{\sigma_{-1}(1)}\cdots a_{2s-1} \cdots a_{\sigma^{-1}(i-1)} a_{\sigma^{-1}(i+2)} \cdots a_{\sigma_{-1}(j-1)}}\\
    & \cdot\state{ a_{2s}}\state{a_{\sigma^{-1}(j+1)}\cdots \cdots  a_{\sigma^{-1}(2k)}} \bigg{)},
\end{align*}
which can be further simplified as
\begin{align*}
-K(n-1) \sum_{\substack{a_{2l-1}=a_{2l}=1\\
    l=1,...,k, l \neq t}}^n(\nabla^{2k-2}f)\bigg{(}& \state{a_{\sigma_{-1}(1)}\cdots a_{2s-1} \cdots a_{\sigma^{-1}(i-1)} a_{\sigma^{-1}(i+2)} \cdots a_{\sigma_{-1}(j_0-1)}}\\
    & \cdot\state{a_{2s}a_{\sigma^{-1}(j_0+1)}\cdots \cdots  a_{\sigma^{-1}(2k)}} \bigg{)}.
\end{align*}
If we write $\theta^{\otimes (k-1)}$ as
$$\theta^{\otimes (k-1)} = \sum_{\substack{a_{2l-1}=a_{2l}=1\\l=1,..., k,l\neq t}}^n \state{a_1a_2 \cdots a_{2s-1}a_{2s} \cdots \widehat{a_{2t-1}a_{2t}}\cdots a_{2k-1}a_{2k}},$$
then it is immediately seen that the sum (\ref{difference-2-2}) is the action of a permutation of $\theta^{k-1}$. By induction hypothesis, (\ref{difference-2-2}) is also a scalar multiple of $f$. 

    
\noindent\textbf{Case 2.3. } Both $2s-1$ and $2t-1$ appear in $\{\sigma^{-1}(i+2), ..., \sigma^{-1}(2k)\}$. Without loss of generality, assume $\sigma^{-1}(j_1)=2s-1, \sigma^{-1}(j_2)=2t-1$ with $j_1 < j_2$. Then (\ref{difference-2}) can be written as
\begin{align}
    & \begin{aligned}
    -\sum_{\substack{a_{2l-1}=a_{2l}=1\\
    l=1,...,k}}^n\sum_{j=i+2}^{j_1-1}(\nabla^{2k-2}f)\bigg{(}& \state{a_{\sigma_{-1}(1)} \cdots a_{\sigma^{-1}(i-1)} a_{\sigma^{-1}(i+2)} \cdots a_{\sigma_{-1}(j-1)}}\\
    & \cdot\state{ R(a_{2s}a_{2t})a_{\sigma^{-1}(j)}}\state{a_{\sigma^{-1}(j+1)}\cdots a_{2s-1} \cdots a_{2t-1} \cdots a_{\sigma^{-1}(2k)}} \bigg{)}
\end{aligned}\label{difference-2-4}\\
& \begin{aligned}
    -\sum_{\substack{a_{2l-1}=a_{2l}=1\\
    l=1,...,k}}^n(\nabla^{2k-2}f)\bigg{(}& \state{a_{\sigma_{-1}(1)} \cdots a_{\sigma^{-1}(i-1)} a_{\sigma^{-1}(i+2)} \cdots a_{\sigma_{-1}(j-1)}}\\
    & \cdot\state{ R(a_{2s}a_{2t})a_{2s-1}}\state{a_{\sigma^{-1}(j+1)}\cdots a_{2t-1}\cdots a_{\sigma^{-1}(2k)}} \bigg{)}
\end{aligned}\label{difference-2-5}\\
& \begin{aligned}
    -\sum_{\substack{a_{2l-1}=a_{2l}=1\\
    l=1,...,k}}^n\sum_{j=j_1+1}^{j_2-1}(\nabla^{2k-2}f)\bigg{(}& \state{a_{\sigma_{-1}(1)} \cdots a_{\sigma^{-1}(i-1)} a_{\sigma^{-1}(i+2)} \cdots a_{2s-1}\cdots a_{\sigma_{-1}(j-1)}}\\
    & \cdot\state{ R(a_{2s}a_{2t})a_{\sigma^{-1}(j)}}\state{a_{\sigma^{-1}(j+1)}\cdots a_{2t-1}\cdots a_{\sigma^{-1}(2k)}} \bigg{)}
\end{aligned}\label{difference-2-6}\\
& \begin{aligned}
    -\sum_{\substack{a_{2l-1}=a_{2l}=1\\
    l=1,...,k}}^n(\nabla^{2k-2}f)\bigg{(}& \state{a_{\sigma_{-1}(1)} \cdots a_{\sigma^{-1}(i-1)} a_{\sigma^{-1}(i+2)} \cdots a_{2s-1} \cdots a_{\sigma_{-1}(j-1)}}\\
    & \cdot\state{ R(a_{2s}a_{2t})a_{2t-1}}\state{a_{\sigma^{-1}(j+1)}\cdots a_{\sigma^{-1}(2k)}} \bigg{)}
\end{aligned}\label{difference-2-7}\\
& \begin{aligned}
    -\sum_{\substack{a_{2l-1}=a_{2l}=1\\
    l=1,...,k}}^n\sum_{j=i+2}^{2k}(\nabla^{2k-2}f)\bigg{(}& \state{a_{\sigma_{-1}(1)} \cdots a_{\sigma^{-1}(i-1)} a_{\sigma^{-1}(i+2)} \cdots a_{2s-1}\cdots a_{2t-1} \cdots a_{\sigma_{-1}(j-1)}}\\
    & \cdot\state{ R(a_{2s}a_{2t})a_{\sigma^{-1}(j)}}\state{a_{\sigma^{-1}(j+1)}\cdots a_{\sigma^{-1}(2k)}} \bigg{)}.
\end{aligned}\label{difference-2-8}
\end{align}
The sums (\ref{difference-2-4}), (\ref{difference-2-6}), and (\ref{difference-2-8}) can be similarly processed as in Case 2.1, while the sums (\ref{difference-2-5}) and (\ref{difference-2-7}) can be processed similarly as in Case 2.2 with trivial modifications. We shall not repeat the discussion here. If instead, $j_1 > j_2$, this amounts to be swapping $a_{2s-1}$ and $a_{2t-1}$. The process is still similar.


\noindent\textit{Proof of the Theorem.} We first note that 
$$(\nabla^{2k}f)(\theta^{\otimes k}) = (-\lambda)^k f\in \C f. $$
Since $t_1 = (12), t_2 = (23), ..., t_{2k-1} = (2k-1, 2k)$ generates $\text{Sym}_{2k}$, any $\sigma\in \text{Sym}_{2k}$ can be written as a product $t_{i_k}\cdots t_{i_1}$. The proposition then allows us to conclude that
$$(\nabla^{2k} f)(t_{i_{j+1}} t_{i_j}\cdots t_{i_1} \theta^{\otimes k}) - (\nabla^{2k}f)(t_{i_j} \cdots t_{i_1} \theta^{\otimes k}) \in \C f.$$
for every $j = 0, ..., k-1$. Summed up with all $j$'s, we see that
$$(\nabla^{2k}f)(\sigma \theta^{\otimes k}) - (\nabla^{2k}f)(\theta^{\otimes k}) \in \C f.$$
It then follows from $(\nabla^{2k}f)(\theta^{\otimes k})\in \C f$ that 
$$(\nabla^{2k}f)(\sigma (\theta^{\otimes k}))\in \C f. $$
\end{proof}

\begin{rema}
Indeed the same argument shows that for every $\sigma\in \text{Sym}_{2k}$, the action of $\sigma(\theta^k)$ on $f$ is the same as the action of some polynomial in $\theta$ determined by $\sigma$. In the next section we will discuss the properties of such polynomials. 
\end{rema}

\subsection{Along the non-$O(n,\R)$-invariant tensors}

Now we look at the action of the other type of parallel tensors. We have the following theorem: 
\begin{thm}
For any $\sigma\in \text{Sym}_{2k+n}$, 
$$(\nabla^{2k+n}f) (\sigma (\theta^{\otimes k}\cdot \Lambda)) = 0. $$
\end{thm}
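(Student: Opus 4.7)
The plan is to mirror the structure of the proof of Theorem \ref{4-2}, proceeding by induction on $k$, but with ``belongs to $\C f$'' replaced by ``vanishes''. For the base case $k = 0$, I would use that $\Lambda$ is totally antisymmetric, so $\sigma(\Lambda) = (-1)^\sigma \Lambda$ for every $\sigma$; thus it suffices to show $(\nabla^n f)(\Lambda) = 0$. Expanding gives $\sum_{\tau \in \text{Sym}_n}(-1)^\tau (\nabla^n f)(X_{\tau(1)}, \ldots, X_{\tau(n)})$. By Theorem \ref{Comm-Cov}, swapping any adjacent pair of arguments $X_{\tau(i)}, X_{\tau(i+1)}$ (with $i < n-1$) produces correction terms of the form $(\nabla^{n-2}f)(\ldots, -R(X_{\tau(i)}, X_{\tau(i+1)})X_{\tau(j)}, \ldots)$ for $j > i+1$, while swapping the last pair produces no correction at all; since $\tau$ is a bijection, $\tau(i), \tau(i+1), \tau(j)$ are pairwise distinct labels, so by the calculation in Section \ref{R-tensor} each such $R$-factor is zero. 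Hence $(\nabla^n f)(X_{\tau(1)}, \ldots, X_{\tau(n)})$ is independent of $\tau$, and summing $(-1)^\tau$ over $\text{Sym}_n$ gives zero. The same observation, applied only to the last $n$ slots, proves the stronger unpermuted statement $(\nabla^{2k+n}f)(\theta^{\otimes k}\otimes \Lambda) = 0$ for every $k$, because the swaps still take place among $n$ distinct basis vectors and so produce no correction.

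For the inductive step, I would assume the theorem for all $k' < k$ and show, for every $\sigma \in \text{Sym}_{2k+n}$ and every adjacent transposition $t_i = (i, i+1)$,
$$(\nabla^{2k+n}f)(\sigma(\theta^{\otimes k}\otimes \Lambda)) - (\nabla^{2k+n}f)(t_i\sigma(\theta^{\otimes k}\otimes \Lambda)) = 0.$$
Chaining such identities reduces $\sigma$ to the identity, for which the claim is already proved in the previous paragraph. By Theorem \ref{Comm-Cov}, the above difference equals a sum over $j = i+2, \ldots, 2k+n$ of $(\nabla^{2k+n-2}f)$-evaluations with positions $i, i+1$ deleted and $-R(Z_i, Z_{i+1})Z_j$ inserted at position $j$. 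I would then split by whether positions $i, i+1, j$ fall in the $\theta$-part or the $\Lambda$-part of $\sigma(\theta^{\otimes k}\otimes \Lambda)$, paralleling the case analysis of Proposition \ref{4-3}: each correction either vanishes outright (because of $R(v,v) = 0$, or because $R$ is evaluated at three distinct basis vectors from the $\Lambda$-block), or, after absorbing the Kronecker deltas from $R(v_1, v_2)v_3 = -K(g(v_1, v_3)v_2 - g(v_2, v_3)v_1)$ into the surrounding sums over $a_1, \ldots, a_k$ and the alternating sum over $\tau$, collapses to $(\nabla^{2k+n-2}f)$ evaluated on a permutation of $\theta^{\otimes(k-1)}\otimes \Lambda$, which is zero by the induction hypothesis.

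The main obstacle will be the combinatorial bookkeeping in the mixed cases of the inductive step, especially when $\{i, i+1\}$ straddles a $\theta$-slot and a $\Lambda$-slot, or when $i, i+1$ both sit in the $\Lambda$-block while $j$ sits in a $\theta$-slot. In each such situation one has to perform the Kronecker-delta contraction carefully, re-index the resulting sums, and recognize the output as a permutation of $\theta^{\otimes(k-1)}\otimes \Lambda$ with its ``reduced'' $\Lambda$-block possibly relocated to new positions after absorbing a $\theta$-pair and removing a $\tau$-label. The conceptual point throughout is the same as in the base case: swaps within an already-antisymmetric block cost nothing, so $\theta^{\otimes k}\otimes \Lambda$ itself is killed directly, and every other permuted version differs from it only by lower-degree corrections that the induction hypothesis eliminates.
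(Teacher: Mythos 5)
Your proposal is correct and follows essentially the same route as the paper: induction on $k$, with the inductive step consisting of showing that adjacent-transposition differences vanish via Theorem \ref{Comm-Cov} and recognizing each surviving curvature correction, after contracting the Kronecker deltas, as the action of a permutation of $\theta^{\otimes(k-1)}\otimes\Lambda$ (the paper's Proposition \ref{4-5} and its Cases 1--8 carry out exactly the bookkeeping you flag as the main obstacle). The only cosmetic difference is in the base case and the terminal $\sigma=\mathrm{id}$ case: the paper pairs each even permutation with its composition with $(n-1,n)$ and uses only the symmetry of the last two slots (handling $\theta^{\otimes k}\otimes\Lambda$ implicitly via the factorization $\psi_U(X\otimes Y)=\psi_U(X)\psi_U(Y)$), whereas you derive full symmetry of $\nabla^{n}f$ on the $\Lambda$-block from $R(X_a,X_b)X_c=0$ for distinct $a,b,c$; both are valid.
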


\begin{proof}
We argue by induction on $k$. For the base case $k=0$, it suffices to argue that 
$$(\nabla^{n}f)(\Lambda) = 0. $$
as permutations on $\Lambda$ only changes the sign of the tensor. Using the notation introduced in the proof of Proposition \ref{4-3}, 
$$\Lambda = \sum_{\mu\in \text{Sym}_n}(-1)^\mu \state{\mu(1) \cdots \mu(n)}.$$
Let $A_n$ be the subgroup formed by the even permutations. Then 
$$\text{Sym}_n = A_n \cup (n-1, n)A_n.$$ 
This allows us to write 
$$\Lambda = \sum_{\mu\in A_n} \state{\mu(1) \cdots \mu(n-1)\mu(n)} - \sum_{\mu\in A_n} \state{\mu(1) \cdots \mu(n)\mu(n-1)}. $$
Note that for every $\mu\in A_n$, it follows from the first part of Theorem \ref{Comm-Cov} that
$$(\nabla^n f)(\state{\mu(1) \cdots \mu(n-1)\mu(n)}) = (\nabla^n f)(\state{\mu(1) \cdots \mu(n)\mu(n-1)}).$$
This shows $(\nabla^n f)(\Lambda)$ is simply a sum of zeros. Thus follows the conclusion. 

Now assume the conclusion holds for $k-1$. Based on the induction hypothesis, we similar proceed to prove the following proposition. 

\begin{prop}\label{4-5}
For any $\sigma\in \text{Sym}_{2k+n}$ and any $i=1, ..., 2k+n-1$,  
$$(\nabla^{2k}f)(\sigma (\theta^{\otimes k}\otimes \Lambda)) - (\nabla^{2k}f)((i, i+1)\sigma(\theta^{\otimes k}\otimes \Lambda)) = 0.$$
\end{prop}

\noindent\textit{Proof of the Proposition. }
Let 
$$\sigma = \begin{pmatrix} 1 & 2 & \cdots & 2k & 2k+1 & 2k+2 & \cdots & 2k+n\\
\sigma(1) & \sigma(2) & \cdots & \sigma(2k) & i_1 & i_2 &\cdots & i_n
\end{pmatrix}.$$
Without loss of generality, we assume that $i_1 < i_2 < \cdots < i_n$, as permuting $i_1, ..., i_n$ only brings a sign change to the tensor $\sigma(\theta^{\otimes k} \otimes \Lambda)$. Then we can write the tensor $\sigma(\theta^{\otimes k} \otimes \Lambda)$ as 
\begin{align*}
    \sum_{\substack{a_{2l-1}=a_{2l}=1\\l=1,...,k}}^n \sum_{\mu\in \text{Sym}_n}(-1)^\mu \state{ a_{\sigma^{-1}(1)} \cdots a_{\sigma^{-1}(i_1-1)}\mu(1) a_{\sigma^{-1}(i_1+1)}\cdots \cdots a_{\sigma^{-1}(i_{n}-1)}\mu(n) a_{\sigma^{-1}(i_n+1)} \cdots a_{\sigma^{-1}(2k+n)}  }.
\end{align*}
Regarding the transposition $(i,i+1)$, essentially there are four cases to consider. For exposition purposes, we will first study four special subcases, then generalize. 

\noindent \textbf{Case 1. } $i\geq i_n+1$. In this case, 
\begin{align*}
    & \quad (\nabla^{2k+n}f)(\sigma(\theta^{\otimes k}\otimes \Lambda)) - (\nabla^{2k+n}f)((i,i+1)\sigma(\theta^{\otimes k}\otimes \Lambda)) \\
    & \begin{aligned}
    =  - \sum_{\substack{a_{2l-1}=a_{2l}=1\\l=1,...,k}}^n \sum_{\mu\in \text{Sym}_n}\sum_{j=i+2}^{2k+n} (-1)^\mu (\nabla^{2k+n-2}f)&\bigg{(}\state{ a_{\sigma^{-1}(1)} \cdots \mu(1) \cdots \cdots \mu(n) a_{\sigma^{-1}(i_n+1)}\cdots a_{\sigma^{-1}(j-1)}}\\ & \cdot \state{R(a_{\sigma^{-1}(i)}, a_{\sigma^{-1}(i+1)})a_{\sigma^{-1}(j)}}\state{a_{\sigma^{-1}(j+1)}\cdots a_{\sigma^{-1}(2k+n)}  }\bigg{)}. 
    \end{aligned}
\end{align*}
For each fixed $\mu\in \text{Sym}_n$, we can take care of the sum similarly as in Proposition \ref{4-3}. The only difference here lies on the occurrence of $\mu(1), ..., \mu(n)$ in the front, which does not change the process. Finally, $(\nabla^{2k+n}f)(\sigma(\theta^{\otimes k} \otimes \Lambda))- (\nabla^{2k+n}f)(\sigma(\theta^{\otimes k} \otimes \Lambda))$ is the same as an action of some $(2k+n-2)$-degree tensor of $f$
which is a permutation of $\theta^{\otimes (k-1)}\otimes \Lambda$. By the induction hypothesis, $(\nabla^{2k+n}f)(\sigma(\theta^{\otimes k} \otimes \Lambda))- (\nabla^{2k+n}f)(\sigma(\theta^{\otimes k} \otimes \Lambda)) = 0$. 

\noindent\textbf{Case 2. }$i = i_n.$ In this case, 
\begin{align}
    & \quad (\nabla^{2k+n}f)(\sigma(\theta^{\otimes k}\otimes \Lambda)) - (\nabla^{2k+n}f)((i,i+1)\sigma(\theta^{\otimes k}\otimes \Lambda)) \nonumber\\
    & \begin{aligned}
    =  - \sum_{\substack{a_{2l-1}=a_{2l}=1\\l=1,...,k}}^n \sum_{\mu\in \text{Sym}_n}\sum_{j=i+2}^{2k+n} (-1)^\mu (\nabla^{2k+n-2}f)&\bigg{(}\state{ a_{\sigma^{-1}(1)} \cdots \mu(1) \cdots \cdots \mu(n-1) \cdots a_{\sigma^{-1}(i_n-1)}}\\ & \cdot \state{a_{\sigma(i_n+2)}\cdots a_{\sigma^{-1}(j-1)}}\state{R(\mu(n), a_{\sigma^{-1}(i_n+1)})a_{\sigma^{-1}(j)}}\\
    & \cdot \state{a_{\sigma^{-1}(j+1)}\cdots a_{\sigma^{-1}(2k+n)}  }\bigg{)}. 
    \end{aligned}\label{difference-3}
\end{align}
The process is similar to Proposition \ref{4-3} but with a slightly less trivial modification. For exposition purposes, we will go through the details here. 

Let $\sigma^{-1}(i_{n+1})\in \{2s-1, 2s\}$ for some $s=1, ..., k$. Without loss of generality, assume $\sigma^{-1}(i_{n+1})=2s$. 

\noindent\textbf{Case 2.1. } 
$2s-1$ appears in $\{\sigma^{-1}(1), ..., \sigma^{-1}(i_{n}-1)\}$. Then (\ref{difference-3}) can be rewritten as
\begin{align}
    & \begin{aligned}
    -  \sum_{j=i+2}^{2k+n}\sum_{\substack{a_{2l-1}=a_{2l}=1\\l=1,...,k}}^n \sum_{\mu\in \text{Sym}_n} (-1)^\mu (\nabla^{2k+n-2}f)&\bigg{(}\state{ a_{\sigma^{-1}(1)} \cdots \mu(1) \cdots a_{2s-1} \cdots \mu(n-1) \cdots a_{\sigma^{-1}(i_n-1)}}\\ & \cdot \state{a_{\sigma(i_n+2)}\cdots a_{\sigma^{-1}(j-1)}}\state{R(\mu(n), a_{2s})a_{\sigma^{-1}(j)}}\\
    & \cdot \state{a_{\sigma^{-1}(j+1)}\cdots a_{\sigma^{-1}(2k+n)}  }\bigg{)}. 
    \end{aligned}\label{difference-3-1}
\end{align}
Here the exact position of $a_{2s-1}$ is not important. For each fixed $j\in \{i+2, ..., 2k+n\}$, let $\sigma^{-1}(j)\in \{2t-1,2t\}$ for some $t=\{1,...,k\}$ depending on $j$. Without loss of generality, assume $\sigma^{-1}(j)=2t$ and $2t-1$ also also appears in $\{\sigma^{-1}(1), ..., \sigma^{-1}(i_n-1)\}$. So  (\ref{difference-3-1}) can be computed as follows
\begin{align}
    & \begin{aligned}
    \quad  - \sum_{j=i+2}^{2k+n}\sum_{\substack{a_{2l-1}=a_{2l}=1\\l=1,...,k}}^n \sum_{\mu\in \text{Sym}_n} (-1)^\mu (\nabla^{2k+n-2}f)&\bigg{(}\state{ a_{\sigma^{-1}(1)} \cdots \mu(1) \cdots a_{2s-1} \cdots a_{2t-1} \cdots \mu(n-1) \cdots a_{\sigma^{-1}(i_n-1)}}\\ & \cdot \state{a_{\sigma(i_n+2)}\cdots a_{\sigma^{-1}(j-1)}}\state{R(\mu(n), a_{2s})a_{2t}}\\
    & \cdot \state{a_{\sigma^{-1}(j+1)}\cdots a_{\sigma^{-1}(2k+n)}  }\bigg{)} 
    \end{aligned}\nonumber\\
    & \begin{aligned}
    = - \sum_{j=i+2}^{2k+n}\sum_{\substack{a_{2l-1}=a_{2l}=1\\l=1,...,k\\ l\neq s,l\neq t}}^n \sum_{\mu\in \text{Sym}_n}\sum_{u,v=1}^n (-1)^\mu (\nabla^{2k+n-2}f)&\bigg{(}\state{ a_{\sigma^{-1}(1)} \cdots \mu(1) \cdots u \cdots v \cdots \mu(n-1) \cdots a_{\sigma^{-1}(i_n-1)}}\\ & \cdot \state{a_{\sigma(i_n+2)}\cdots a_{\sigma^{-1}(j-1)}}\state{R(\mu(n), u)v}\\
    & \cdot \state{a_{\sigma^{-1}(j+1)}\cdots a_{\sigma^{-1}(2k+n)}  }\bigg{)}. \end{aligned}\label{difference-3-2}
    \end{align}
Here again the exact position of $a_{2t-1}$ is not important. Note that $$\state{R(\mu(n), u)v} = \left\{\begin{array}{ll} 
K \state{\mu(n)} & \text{ if }u=v\neq \mu(n)\\
-K \state{u} & \text{ if }v=\mu(n)\\
0 & \text{ otherwise. }
\end{array}\right.$$ 
Excluding the zero terms in (\ref{difference-3-2}), the sum is equal to 
    \begin{align}
    & \begin{aligned}
    - K(n-1) \sum_{j=i+2}^{2k+n}\sum_{\substack{a_{2l-1}=a_{2l}=1\\l=1,...,k\\ l\neq s,l\neq t}}^n \sum_{\mu\in \text{Sym}_n}\sum_{u=1}^n (-1)^\mu (\nabla^{2k+n-2}f)&\bigg{(}\state{ a_{\sigma^{-1}(1)} \cdots \mu(1) \cdots u \cdots u \cdots \mu(n-1) \cdots a_{\sigma^{-1}(i_n-1)}}\\ & \cdot \state{a_{\sigma(i_n+2)}\cdots a_{\sigma^{-1}(j-1)}\mu(n)}\\
    & \cdot \state{a_{\sigma^{-1}(j+1)}\cdots a_{\sigma^{-1}(2k+n)}  }\bigg{)} \end{aligned}\label{difference-3-3}\\
     & \begin{aligned}
    + K \sum_{j=i+2}^{2k+n}\sum_{\substack{a_{2l-1}=a_{2l}=1\\l=1,...,k\\ l\neq s,l\neq t}}^n \sum_{\mu\in \text{Sym}_n}\sum_{u=1}^n (-1)^\mu (\nabla^{2k+n-2}f)&\bigg{(}\state{ a_{\sigma^{-1}(1)} \cdots \mu(1) \cdots u \cdots \mu(n) \cdots \mu(n-1) \cdots a_{\sigma^{-1}(i_n-1)}}\\ & \cdot \state{a_{\sigma(i_n+2)}\cdots a_{\sigma^{-1}(j-1)}u}\state{a_{\sigma^{-1}(j+1)}\cdots a_{\sigma^{-1}(2k+n)}  }\bigg{)}. \end{aligned}\label{difference-3-4}
\end{align}
Writing $\theta^{\otimes (k-1)}\otimes \Lambda$ as
$$\sum_{\substack{a_{2l-1}=a_{2l}=1\\l=1,...,k\\l\neq s,l\neq t}}^n\state{a_1a_2 \cdots \widehat{a_{2s-1}a_{2s}} \cdots \widehat{a_{2t-1}a_{2t}} \cdots a_{2k-1}a_{2k}}\sum_{u=1}^n \state{uu}\sum_{\mu\in \text{Sym}_n}(-1)^\mu \state{\mu(1) \cdots \mu(n)},$$
we see that for each $j=i+2, ..., 2k+n$, the corresponding summand in both (\ref{difference-3-3}) and (\ref{difference-3-4})  coincide with a scalar multiple of the action of some permutation of $\theta^{\otimes (k-1)}\otimes \Lambda$ on $f$. By induction hypothesis, both (\ref{difference-3-3}) and (\ref{difference-3-4}) are then a sum of zeros. The conclusion then follows in this case. 

Now we explain why our assumptions bring no loss of generality. 

\begin{enumerate}
    \item If instead, $\sigma^{-1}(i_{n}) = 2s-1$, then with the assumption that $2s\in \{\sigma^{-1}(1), ..., \sigma^{-1}(i_n-1)\}$, the discussion is verbatim, as $a_{2s}=a_{2s-1}$. 
    \item If $\sigma^{-1}(j)=2t$ and $2t-1$ now appears in $\{\sigma^{-1}(i_n+1), ..., \sigma^{-1}(2k+n)\}$, this amounts to be moving the first $v$ in (\ref{difference-3-2}) to either the second line or the third line. Thus, the second $u$ in (\ref{difference-3-3}) is moved to the second line or the third line, and $\mu(n)$ in (\ref{difference-3-4}) is moved into the second line, before $u$ or after $u$. The conclusion that (\ref{difference-3-3}) and (\ref{difference-3-4}) are actions of permutations of $\theta^{k-1}\otimes \Lambda$ is unchanged. 
    \item If $\sigma^{-1}(j)=2t-1$ and $2t$ appears anywhere, the discussion is also verbatim, as $a_{2t-1} = a_{2t}$.
\end{enumerate}
This comment applies to all the cases to be discussed below and shall not be repeated. 

\noindent\textbf{Case 2.2. }$\sigma^{-1}(i_n+1)=2s$ for some $s=1,..., k$ and $2s-1$ appears in $\{\sigma^{-1}(i_n+2),...,\sigma^{-1}(2k+n)\}$. Say $\sigma^{-1}(j_0)=2s-1$. Then (\ref{difference-3}) becomes
\begin{align}
    & \begin{aligned}
    - \sum_{\substack{a_{2l-1}=a_{2l}=1\\l=1,...,k}}^n \sum_{\mu\in \text{Sym}_n}\sum_{j=i+2}^{j_0-1} (-1)^\mu (\nabla^{2k+n-2}f)&\bigg{(}\state{ a_{\sigma^{-1}(1)} \cdots \mu(1) \cdots \cdots \mu(n-1) \cdots a_{\sigma^{-1}(i_n-1)}}\\ & \cdot \state{a_{\sigma(i_n+2)}\cdots a_{\sigma^{-1}(j-1)}}\state{R(\mu(n), a_{2s})a_{\sigma^{-1}(j)}}\\
    & \cdot \state{a_{\sigma^{-1}(j+1)}\cdots a_{2s-1}\cdots a_{\sigma^{-1}(2k+n)}  }\bigg{)} 
    \end{aligned}\label{difference-3-5}\\
    & \begin{aligned}
    - \sum_{\substack{a_{2l-1}=a_{2l}=1\\l=1,...,k}}^n \sum_{\mu\in \text{Sym}_n} (-1)^\mu (\nabla^{2k+n-2}f)&\bigg{(}\state{ a_{\sigma^{-1}(1)} \cdots \mu(1) \cdots \cdots \mu(n-1) \cdots a_{\sigma^{-1}(i_n-1)}}\\ & \cdot \state{a_{\sigma(i_n+2)}\cdots a_{\sigma^{-1}(j-1)}}\state{R(\mu(n), a_{2s})a_{2s-1}}\\
    & \cdot \state{a_{\sigma^{-1}(j+1)}\cdots a_{\sigma^{-1}(2k+n)}  }\bigg{)} 
    \end{aligned}\label{difference-3-6}\\
    & \begin{aligned}
    - \sum_{\substack{a_{2l-1}=a_{2l}=1\\l=1,...,k}}^n \sum_{\mu\in \text{Sym}_n}\sum_{j=j_0+1}^{2k+n} (-1)^\mu (\nabla^{2k+n-2}f)&\bigg{(}\state{ a_{\sigma^{-1}(1)} \cdots \mu(1) \cdots \cdots \mu(n-1) \cdots a_{\sigma^{-1}(i_n-1)}}\\ & \cdot \state{a_{\sigma(i_n+2)}\cdots a_{2s-1}\cdots a_{\sigma^{-1}(j-1)}}\state{R(\mu(n), a_{2s})a_{\sigma^{-1}(j)}}\\
    & \cdot \state{a_{\sigma^{-1}(j+1)}\cdots a_{\sigma^{-1}(2k+n)}  }\bigg{)}. 
    \end{aligned}\label{difference-3-7}
\end{align}
The sums (\ref{difference-3-5}) and (\ref{difference-3-7}) can be handled similarly as in Case 2.1. For the sum (\ref{difference-3-6}), note that $\state{R(\mu(n),a_{2s})a_{2s-1}}$ is nonzero only when $a_{2s}=a_{2s-1}\neq \mu(n)$, with the value being $K\state{\mu(n)}$. Thus (\ref{difference-3-4}) is simply %
\begin{align*}
    - K(n-1) \sum_{\substack{a_{2l-1}=a_{2l}=1\\l=1,...,k,l\neq s}}^n \sum_{\mu\in \text{Sym}_n} (-1)^\mu (\nabla^{2k+n-2}f)&\bigg{(}\state{ a_{\sigma^{-1}(1)} \cdots \mu(1) \cdots \cdots \mu(n-1) \cdots a_{\sigma^{-1}(i_n-1)}}\\ & \cdot \state{a_{\sigma(i_n+2)}\cdots a_{\sigma^{-1}(j-1)}\mu(n)a_{\sigma^{-1}(j+1)}\cdots a_{\sigma^{-1}(2k+n)}  }\bigg{)}. 
\end{align*}
which coincides with the action of a permutation of $\theta^{\otimes (k-1)}\otimes \Lambda$ and thus is zero. 

\noindent\textbf{Case 3. }$i=i_{n}-1>i_{n-1}$. In this case,
\begin{align*}
    & \quad(\nabla^{2k+n}f)(\sigma(\theta^{\otimes k}\otimes \Lambda) - (\nabla^{2k+n}f)((i,i+1)\sigma(\theta^{\otimes k}\otimes \Lambda) \\
    & \begin{aligned}
    = - \sum_{\substack{a_{2l-1}=a_{2l}=1\\l=1,...,k}}^n \sum_{\mu\in \text{Sym}_n}\sum_{j=i+2}^{2k+n} (-1)^\mu (\nabla^{2k+n-2}f)&\bigg{(}\state{ a_{\sigma^{-1}(1)} \cdots \mu(1) \cdots \cdots \mu(n-1) \cdots a_{\sigma^{-1}(i_n-2)} }\\ & \cdot\state{a_{\sigma^{-1}(i_n+1)} \cdots a_{\sigma^{-1}(j-1)}} \state{R(a_{\sigma^{-1}(i_n-1)}, \mu(n))a_{\sigma^{-1}(j)} }\\& \cdot \state{a_{\sigma^{-1}(j+1)}\cdots a_{\sigma^{-1}(2k+n)}  }\bigg{)}. 
    \end{aligned}
\end{align*}
Using $R(a_{\sigma^{-1}(i_n-1)},\mu(n)) = -R(\mu(n),a_{\sigma^{-1}(i_n-1)})$, we can apply the same procedure as we did for Case 2. We shall not repeat the details here. 

\noindent\textbf{Case 4.} $i=i_n-1 = i_{n-1}$. In this case, 
\begin{align}
    & \quad (\nabla^{2k+n}f)(\sigma(\theta^{\otimes k}\otimes \Lambda) - (\nabla^{2k+n}f)((i,i+1)\sigma(\theta^{\otimes k}\otimes \Lambda) \nonumber\\
    & \begin{aligned}
    = - \sum_{\substack{a_{2l-1}=a_{2l}=1\\l=1,...,k}}^n \sum_{\mu\in \text{Sym}_n}\sum_{j=i+2}^{2k+n} (-1)^\mu (\nabla^{2k+n-2}f)&\bigg{(}\state{ a_{\sigma^{-1}(1)} \cdots \mu(1) \cdots \cdots \mu(n-2) \cdots a_{\sigma^{-1}(i_n-2)} }\\ & \cdot\state{a_{\sigma^{-1}(i_n+1)} \cdots a_{\sigma^{-1}(j-1)}} \state{R(\mu(n-1), \mu(n))a_{\sigma^{-1}(j)} }\\& \cdot \state{a_{\sigma^{-1}(j+1)}\cdots a_{\sigma^{-1}(2k+n)}  }\bigg{)}. 
    \end{aligned}\label{difference-4}
\end{align}
We fix $j$. Without loss of generality, let $\sigma^{-1}(j) = 2s$ and let $2s-1$ appears before the curvature tensor. Note that $R(\mu(n-1),\mu(n))a_{2s}$ is nonzero only when $a_{2s}=\mu(n-1)$ and $a_{2s}=\mu(n)$. Thus
(\ref{difference-4}) simplifies as 
\begin{align*}
    & \begin{aligned}
    \quad - \sum_{\substack{a_{2l-1}=a_{2l}=1\\l=1,...,k}}^n \sum_{\mu\in \text{Sym}_n}\sum_{j=i+2}^{2k+n} (-1)^\mu (\nabla^{2k+n-2}f)&\bigg{(}\state{ a_{\sigma^{-1}(1)} \cdots \mu(1) \cdots a_{2s-1} \cdots \mu(n-2) \cdots a_{\sigma^{-1}(i_n-2)} }\\ & \cdot\state{a_{\sigma^{-1}(i_n+1)} \cdots a_{\sigma^{-1}(j-1)}} \state{R(\mu(n-1), \mu(n))a_{2s} }\\& \cdot \state{a_{\sigma^{-1}(j+1)}\cdots a_{\sigma^{-1}(2k+n)}  }\bigg{)} 
    \end{aligned}\\
    & \begin{aligned}
    = K \sum_{\substack{a_{2l-1}=a_{2l}=1\\l=1,...,k\\l\neq s}}^n \sum_{\mu\in \text{Sym}_n}\sum_{j=i+2}^{2k+n} (-1)^\mu (\nabla^{2k+n-2}f)&\bigg{(}\state{ a_{\sigma^{-1}(1)} \cdots \mu(1) \cdots \cdots \mu(n-1) \cdots \mu(n-2) \cdots a_{\sigma^{-1}(i_n-2)} }\\ & \cdot\state{a_{\sigma^{-1}(i_n+1)} \cdots a_{\sigma^{-1}(j-1)}} \state{\mu(n)}\\& \cdot \state{a_{\sigma^{-1}(j+1)}\cdots a_{\sigma^{-1}(2k+n)}  }\bigg{)} 
    \end{aligned}\\
    & \begin{aligned}
    \quad - K \sum_{\substack{a_{2l-1}=a_{2l}=1\\l=1,...,k}}^n \sum_{\mu\in \text{Sym}_n}\sum_{j=i+2}^{2k+n} (-1)^\mu (\nabla^{2k+n-2}f)&\bigg{(}\state{ a_{\sigma^{-1}(1)} \cdots \mu(1) \cdots \cdots \mu(n) \cdots \mu(n-2) \cdots a_{\sigma^{-1}(i_n-2)} }\\ & \cdot\state{a_{\sigma^{-1}(i_n+1)} \cdots a_{\sigma^{-1}(j-1)}} \state{\mu(n-1)}\\& \cdot \state{a_{\sigma^{-1}(j+1)}\cdots a_{\sigma^{-1}(2k+n)}  }\bigg{)}. 
    \end{aligned}
\end{align*}
Each sum is zero by the same reason at the end of the previous cases. 

Having studied the special cases above, we now investigate their generalizations. 

\noindent\textbf{Case 5. }$i_{m-1}+1 \leq i \leq i_{m}-2$ for each $m = 1, 2, ..., n$ (here we regard $i_0=0$). In this case, 
\begin{align}
    & \quad (\nabla^{2k+n}f)(\sigma(\theta^{\otimes k}\otimes \Lambda) - (\nabla^{2k+n}f)((i,i+1)\sigma(\theta^{\otimes k}\otimes \Lambda) \nonumber\\
    & \begin{aligned}
    = - \sum_{\substack{a_{2l-1}=a_{2l}=1\\l=1,...,k}}^n \sum_{\mu\in \text{Sym}_n}\sum_{j=i+2}^{i_{m}-1} (-1)^\mu (\nabla^{2k+n-2}f)&\bigg{(}\state{ a_{\sigma^{-1}(1)} \cdots \mu(1) \cdots \cdots \mu(m-1) a_{\sigma^{-1}(i_{m-1}+1)}\cdots a_{\sigma^{-1}(i-1)}}\\ & \cdot
    \state{a_{\sigma^{-1}(i+2)}\cdots a_{\sigma^{-1}(j-1)}}\state{R(a_{\sigma^{-1}(i)}, a_{\sigma^{-1}(i+1)})a_{\sigma^{-1}(j)}}\\
    & \cdot \state{a_{\sigma^{-1}(j+1)}\cdots \mu(m) \cdots \cdots \mu(n) a_{\sigma^{-1}(2k+n)}  }\bigg{)} 
    \end{aligned}\label{difference-5-1}\\
    & \begin{aligned}
    \quad - \sum_{\substack{a_{2l-1}=a_{2l}=1\\l=1,...,k}}^n \sum_{\mu\in \text{Sym}_n} (-1)^\mu (\nabla^{2k+n-2}f)&\bigg{(}\state{ a_{\sigma^{-1}(1)} \cdots \mu(1) \cdots \cdots \mu(m-1) a_{\sigma^{-1}(i_{m-1}+1)}\cdots a_{\sigma^{-1}(i-1)}}\\ & \cdot
    \state{a_{\sigma^{-1}(i+2)}\cdots a_{\sigma^{-1}(i_m-1)}}\state{R(a_{\sigma^{-1}(i)}, a_{\sigma^{-1}(i+1)})\mu(m)}\\
    & \cdot \state{a_{\sigma^{-1}(j+1)}\cdots \mu(m) \cdots \cdots \mu(n) a_{\sigma^{-1}(2k+n)}  }\bigg{)} 
    \end{aligned}\label{difference-5-2}\\
    & \begin{aligned}
    \quad - \sum_{p=m+1}^{n-1}\sum_{\substack{a_{2l-1}=a_{2l}=1\\l=1,...,k}}^n \sum_{\mu\in \text{Sym}_n}\sum_{j=i_p + 1}^{i_{p+1}-1} (-1)^\mu (\nabla^{2k+n-2}f)&\bigg{(}\state{ a_{\sigma^{-1}(1)} \cdots \mu(1) \cdots \mu(m-1) a_{\sigma^{-1}(i_{m-1}+1)}\cdots a_{\sigma^{-1}(i-1)}}\\ & \cdot
    \state{a_{\sigma^{-1}(i+2)} \cdots \mu(m) \cdots \cdots \mu(p-1) \cdots a_{\sigma^{-1}(j-1)}}\\
    &\cdot \state{R(a_{\sigma^{-1}(i)}, a_{\sigma^{-1}(i+1)})a_{\sigma^{-1}(j)}}\\
    & \cdot \state{a_{\sigma^{-1}(j+1)}\cdots \mu(p+1) \cdots \cdots \mu(n) a_{\sigma^{-1}(2k+n)}  }\bigg{)}
    \end{aligned}\label{difference-5-3}\\
    & \begin{aligned}
    \quad - \sum_{p=m+1}^{n}\sum_{\substack{a_{2l-1}=a_{2l}=1\\l=1,...,k}}^n \sum_{\mu\in \text{Sym}_n} (-1)^\mu (\nabla^{2k+n-2}f)&\bigg{(}\state{ a_{\sigma^{-1}(1)} \cdots \mu(1) \cdots \cdots \mu(m-1) a_{\sigma^{-1}(i_{m-1}+1)}\cdots a_{\sigma^{-1}(i-1)}}\\ & \cdot
    \state{a_{\sigma^{-1}(i+2)}\cdots \mu(m) \cdots \cdots \mu(p-1) a_{\sigma^{-1}(i_p-1)}}\\& \cdot\state{R(a_{\sigma^{-1}(i)}, a_{\sigma^{-1}(i+1)})\mu(p)}\\
    & \cdot \state{a_{\sigma^{-1}(j+1)}\cdots \mu(p+1) \cdots \cdots \mu(n) a_{\sigma^{-1}(2k+n)}  }\bigg{)}
    \end{aligned}\label{difference-5-4}\\
    & \begin{aligned}
    \quad - \sum_{\substack{a_{2l-1}=a_{2l}=1\\l=1,...,k}}^n \sum_{\mu\in \text{Sym}_n}\sum_{j=i_n + 1}^{2k+n} (-1)^\mu (\nabla^{2k+n-2}f)&\bigg{(}\state{ a_{\sigma^{-1}(1)} \cdots \mu(1) \cdots \cdots \mu(m-1) a_{\sigma^{-1}(i_{m-1}+1)}\cdots a_{\sigma^{-1}(i-1)}}\\ & \cdot
    \state{a_{\sigma^{-1}(i+2)} \cdots \mu(m) \cdots \cdots \mu(n) \cdots a_{\sigma^{-1}(j-1)}}\\
    &\cdot \state{R(a_{\sigma^{-1}(i)}, a_{\sigma^{-1}(i+1)})a_{\sigma^{-1}(j)}} \state{a_{\sigma^{-1}(j+1)}\cdots  a_{\sigma^{-1}(2k+n)}  }\bigg{)} 
    \end{aligned}\label{difference-5-5}
\end{align}
The sums (\ref{difference-5-1}), (\ref{difference-5-3}) and (\ref{difference-5-5}) can be handled as in Proposition \ref{4-3} or Case 1 here. Also note that the sum (\ref{difference-5-2}) is simply a special case for (\ref{difference-5-4}) when $p=m$. Thus we will handle the summand of (\ref{difference-5-4}) for each $p=m, ..., n$. 

\noindent\textbf{Case 5.1. } $\{\sigma^{-1}(i), \sigma^{-1}(i+1)\} = \{2s-1, 2s\}$ for some $s=1,..., k$. Similar as Case 2.1 in Proposition \ref{4-3}, we have zero. 

\noindent\textbf{Case 5.2. }
$\sigma^{-1}(i)=2s, \sigma^{-1}(i+1)=2t$ for some $s, t=1,...,k, s\neq t$. Without loss of generality, we assume $2s-1$ and $2t-1$ both appear in $\{\sigma^{-1}(1), ..., \sigma^{-1}(i-1)\}$. Then the sum of (\ref{difference-5-2}) and (\ref{difference-5-4}) can be rewritten as
\begin{align*}
    & \begin{aligned}
    \quad - \sum_{p=m}^{n}\sum_{\substack{a_{2l-1}=a_{2l}=1\\l=1,...,k\\
    l\neq s, l\neq t}}^n \sum_{\mu\in \text{Sym}_n}\sum_{u,v=1}^n (-1)^\mu (\nabla^{2k+n-2}f)&\bigg{(}\state{ a_{\sigma^{-1}(1)} \cdots \mu(1) \cdots u  \cdots v \cdots \mu(m-1) \cdots a_{\sigma^{-1}(i-1)}}\\ & \cdot
    \state{a_{\sigma^{-1}(i+2)}\cdots \mu(m) \cdots \cdots \mu(p-1) \cdots a_{\sigma^{-1}(i_p-1)}}\\& \cdot\state{R(u,v)\mu(p)}  \state{a_{\sigma^{-1}(j+1)}\cdots \mu(p+1) \cdots \mu(n) \cdots a_{\sigma^{-1}(2k+n)}  }\bigg{)}
    \end{aligned}
\end{align*}
Note that 
\begin{align*}
    \state{R(u,v)\mu(p)} = \left\{\begin{array}{ll} 
    -K\state{v} & \text{ if } u = \mu(p), v\neq \mu(p), \\
    K\state{u}& \text{ if }u\neq \mu(p), v=\mu(p),\\
    0 & \text{ otherwise.}
    \end{array}
    \right.
\end{align*}
Thus, (\ref{difference-5-4}) is equal to
\begin{align*}
    & \begin{aligned}
    \quad K \sum_{p=m}^{n}\sum_{\substack{a_{2l-1}=a_{2l}=1\\l=1,...,k\\
    l\neq s, l\neq t}}^n \sum_{\mu\in \text{Sym}_n}\sum_{\substack{v=1\\v\neq \mu(p)}}^n (-1)^\mu (\nabla^{2k+n-2}f)&\bigg{(}\state{ a_{\sigma^{-1}(1)} \cdots \mu(1) \cdots \cdots \mu(p) \cdots v \cdots \mu(m-1) \cdots a_{\sigma^{-1}(i-1)}}\\ & \cdot
    \state{a_{\sigma^{-1}(i+2)}\cdots \mu(m) \cdots \cdots \mu(p-1) \cdots a_{\sigma^{-1}(i_p-1)}}\\& \cdot\state{v}  \state{a_{\sigma^{-1}(j+1)}\cdots \mu(p+1) \cdots \cdots \mu(n) \cdots a_{\sigma^{-1}(2k+n)}  }\bigg{)}
    \end{aligned}\\
    & \begin{aligned}
    \quad -K \sum_{p=m}^{n}\sum_{\substack{a_{2l-1}=a_{2l}=1\\l=1,...,k\\
    l\neq s, l\neq t}}^n \sum_{\mu\in \text{Sym}_n}\sum_{\substack{u=1\\u\neq \mu(p)}}^n (-1)^\mu (\nabla^{2k+n-2}f)&\bigg{(}\state{ a_{\sigma^{-1}(1)} \cdots \mu(1) \cdots u  \cdots \mu(p) \cdots \mu(m-1) \cdots a_{\sigma^{-1}(i-1)}}\\ & \cdot
    \state{a_{\sigma^{-1}(i+2)}\cdots \mu(m) \cdots \cdots \mu(p-1) \cdots a_{\sigma^{-1}(i_p-1)}}\\& \cdot\state{u}  \state{a_{\sigma^{-1}(j+1)}\cdots \mu(p+1) \cdots \cdots \mu(n) \cdots a_{\sigma^{-1}(2k+n)}  }\bigg{)}
    \end{aligned}
\end{align*}
Notice that the requirements $v\neq \mu(p)$ in the first sum and $u\neq \mu(p)$ in the second sum can be eliminated, as the extra terms introduced adds up to zero. Then for each $p=m, ..., n$, the summands of both the first and the second sums are actions of a permutation of $\theta^{\otimes (k-1)}\otimes \Lambda$ on $f$. By the induction hypothesis, they are all zero.


\noindent\textbf{Case 6. } $i=i_{m-1}<i_{m}-1$ for $m=2, ..., n-1$. In this case, 
\begin{align}
    & \quad (\nabla^{2k+n}f)(\sigma(\theta^{\otimes k}\otimes \Lambda) - (\nabla^{2k+n}f)((i,i+1)\sigma(\theta^{\otimes k}\otimes \Lambda) \nonumber\\
    & \begin{aligned}
    = - \sum_{\substack{a_{2l-1}=a_{2l}=1\\l=1,...,k}}^n \sum_{\mu\in \text{Sym}_n}\sum_{j=i+2}^{i_{m}-1} (-1)^\mu (\nabla^{2k+n-2}f)&\bigg{(}\state{ a_{\sigma^{-1}(1)} \cdots \mu(1) \cdots \cdots \mu(m-2) \cdots a_{\sigma^{-1}(j-1)}}\\ & \cdot
    \state{R(\mu(m-1), a_{\sigma^{-1}(i_{m-1}+1)})a_{\sigma^{-1}(j)}}\\
    & \cdot \state{a_{\sigma^{-1}(j+1)}\cdots \mu(m) \cdots \cdots \mu(n) a_{\sigma^{-1}(2k+n)}  }\bigg{)} 
    \end{aligned}\label{difference-6-1}\\
    & \begin{aligned}
    \quad - \sum_{\substack{a_{2l-1}=a_{2l}=1\\l=1,...,k}}^n \sum_{\mu\in \text{Sym}_n} (-1)^\mu (\nabla^{2k+n-2}f)&\bigg{(}\state{ a_{\sigma^{-1}(1)} \cdots \mu(1) \cdots \cdots \mu(m-2) \cdots a_{\sigma^{-1}(i_{m-1}-1)}}\\ & \cdot
    \state{R(\mu(m-1), a_{\sigma^{-1}(i_{m-1}+1)})\mu(m)}\\
    & \cdot \state{a_{\sigma^{-1}(i_m+1)}\cdots \mu(m+1) \cdots \cdots \mu(n) a_{\sigma^{-1}(2k+n)}  }\bigg{)} 
    \end{aligned}\label{difference-6-2}\\
    & \begin{aligned}
    \quad - \sum_{p=m}^{n-1}\sum_{\substack{a_{2l-1}=a_{2l}=1\\l=1,...,k}}^n \sum_{\mu\in \text{Sym}_n}\sum_{j=i_p+1}^{i_{p+1}-1} (-1)^\mu (\nabla^{2k+n-2}f)&\bigg{(}\state{ a_{\sigma^{-1}(1)} \cdots \mu(1) \cdots  \mu(m-2) \cdots \mu(m) \cdots \mu(p-1)}\\
    & \cdot \state{a_{\sigma^{-1}(i_{p-1}+1)}\cdots a_{\sigma^{-1}(j-1)}} \\ 
    & \cdot \state{R(\mu(m-1), a_{\sigma^{-1}(i_{m-1}+1)})a_{\sigma^{-1}(j)}}\\
    & \cdot \state{a_{\sigma^{-1}(j+1)}\cdots \mu(p+1) \cdots \cdots \mu(n) a_{\sigma^{-1}(2k+n)}  }\bigg{)} 
    \end{aligned}\label{difference-6-3}\\
    & \begin{aligned}
    \quad - \sum_{p=m+1}^{n}\sum_{\substack{a_{2l-1}=a_{2l}=1\\l=1,...,k}}^n \sum_{\mu\in \text{Sym}_n} (-1)^\mu (\nabla^{2k+n-2}f)&\bigg{(}\state{ a_{\sigma^{-1}(1)} \cdots \mu(1) \cdots \mu(m-2)\cdots \mu(m) \cdots \mu(p-1)}\\& \cdot \state{a_{\sigma^{-1}(i_p+1)} \cdots a_{\sigma^{-1}(i_{p}-1)}}
    \state{R(\mu(m-1), a_{\sigma^{-1}(i_{m-1}+1)})\mu(p)}\\
    & \cdot \state{a_{\sigma^{-1}(i_p+1)}\cdots \mu(p+1) \cdots \cdots \mu(n) a_{\sigma^{-1}(2k+n)}  }\bigg{)} 
    \end{aligned}\label{difference-6-4}\\
    & \begin{aligned}
    \quad - \sum_{\substack{a_{2l-1}=a_{2l}=1\\l=1,...,k}}^n \sum_{\mu\in \text{Sym}_n}\sum_{j=i_n+1}^{2k+n} (-1)^\mu (\nabla^{2k+n-2}f)&\bigg{(}\state{ a_{\sigma^{-1}(1)} \cdots \mu(1) \cdots \mu(m-2) \cdots \mu(m) \cdots \mu(n) \cdots a_{\sigma^{-1}(j-1)}}\\ & \cdot
    \state{R(\mu(m-1), a_{\sigma^{-1}(i_{m-1}+1)})a_{\sigma^{-1}(j)}}\\
    & \cdot \state{a_{\sigma^{-1}(j+1)}\cdots  a_{\sigma^{-1}(2k+n)}  }\bigg{)} 
    \end{aligned}\label{difference-6-5}
\end{align}
The sums (\ref{difference-6-1}), (\ref{difference-6-3}) and (\ref{difference-6-5}) can be handled as in Case 2. The sum (\ref{difference-6-2}) is simply a special case for (\ref{difference-6-4}) when $p=m$. We will handle the summand of (\ref{difference-6-4}) for each $p=m, ..., n$. 

Without loss of generality, let $\sigma^{-1}(i_{m-1}) = 2s$ and assume $2s-1$ appears in $\{\sigma^{-1}(1), ..., \sigma^{-1}(i)\}$. Then the sum of (\ref{difference-6-2}) and (\ref{difference-6-4}) can be rewritten as

\begin{align*}
    & \begin{aligned}    \quad - \sum_{p=m}^{n}\sum_{\substack{a_{2l-1}=a_{2l}=1\\l=1,...,k}}^n \sum_{\mu\in \text{Sym}_n}\sum_{u=1}^n (-1)^\mu (\nabla^{2k+n-2}f)&\bigg{(}\state{ a_{\sigma^{-1}(1)} \cdots \mu(1) \cdots u \cdots \mu(m-2) \cdots \mu(m) \cdots \mu(p-1)}\\ & \cdot \state{a_{\sigma^{-1}(i_{p-1}+1)} \cdots a_{\sigma^{-1}(i_{p}-1)}}
    \state{R(\mu(m-1), u)\mu(p)}\\
    & \cdot \state{a_{\sigma^{-1}(i_p+1)}\cdots \mu(p+1) \cdots \cdots \mu(n) a_{\sigma^{-1}(2k+n)}  }\bigg{)} 
    \end{aligned}\\
    & \begin{aligned}    = -K \sum_{p=m}^{n}\sum_{\substack{a_{2l-1}=a_{2l}=1\\l=1,...,k}}^n \sum_{\mu\in \text{Sym}_n} (-1)^\mu (\nabla^{2k+n-2}f)&\bigg{(}\state{ a_{\sigma^{-1}(1)} \cdots \mu(1)  \cdots \mu(p) \cdots \mu(m-2) \cdots \mu(m)\cdots \mu(p-1)}\\ & \cdot \state{a_{\sigma^{-1}(i_{p-1}+1)} \cdots a_{\sigma^{-1}(i_{p}-1)}}\state{\mu(m-1)}\\ & \cdot
    \state{a_{\sigma^{-1}(i_p+1)}\cdots \mu(p+1) \cdots \cdots \mu(n) a_{\sigma^{-1}(2k+n)}  }\bigg{)} 
    \end{aligned}
\end{align*}
This is again zero. 

\noindent\textbf{Case 7. }$i=i_m - 1 > i_{m-1}$ for $m=1, 2, ..., n-1$. In this case, 
\begin{align*}
    & \quad (\nabla^{2k+n}f)(\sigma(\theta^{\otimes k}\otimes \Lambda) - (\nabla^{2k+n}f)((i,i+1)\sigma(\theta^{\otimes k}\otimes \Lambda) \nonumber\\
    & \begin{aligned}
    = - \sum_{\substack{a_{2l-1}=a_{2l}=1\\l=1,...,k}}^n \sum_{\mu\in \text{Sym}_n}\sum_{j=i+2}^{i_{m+1}-1} (-1)^\mu (\nabla^{2k+n-2}f)&\bigg{(}\state{ a_{\sigma^{-1}(1)} \cdots \mu(1) \cdots \cdots \mu(m-1) \cdots a_{\sigma^{-1}(i_{m}-2)}}\\ & \cdot
    \state{R(a_{\sigma^{-1}(i_{m}-1)}, \mu(m))a_{\sigma^{-1}(j)}}\\
    & \cdot \state{a_{\sigma^{-1}(j+1)}\cdots \mu(m+1) \cdots \cdots \mu(n) a_{\sigma^{-1}(2k+n)}  }\bigg{)} 
    \end{aligned}\\
    & \begin{aligned}
    \quad - \sum_{\substack{a_{2l-1}=a_{2l}=1\\l=1,...,k}}^n \sum_{\mu\in \text{Sym}_n} (-1)^\mu (\nabla^{2k+n-2}f)&\bigg{(}\state{ a_{\sigma^{-1}(1)} \cdots \mu(1) \cdots \cdots \mu(m-1) \cdots a_{\sigma^{-1}(i_{m}-2)}}\\ & \cdot
    \state{R(a_{\sigma^{-1}(i_{m}-1)}, \mu(m))\mu(m+1)}\\
    & \cdot \state{a_{\sigma^{-1}(j+1)}\cdots \mu(m+1) \cdots \cdots \mu(n) a_{\sigma^{-1}(2k+n)}  }\bigg{)} 
    \end{aligned}\\
    & \begin{aligned}
    \quad - \sum_{p=m+1}^{n-1}\sum_{\substack{a_{2l-1}=a_{2l}=1\\l=1,...,k}}^n \sum_{\mu\in \text{Sym}_n}\sum_{j=i_p+1}^{i_{p+1}-1} (-1)^\mu (\nabla^{2k+n-2}f)&\bigg{(}\state{ a_{\sigma^{-1}(1)} \cdots \mu(1) \cdots \mu(m-1) \cdots \mu(m+1) \cdots \mu(p-1)}\\
    & \cdot \state{a_{\sigma^{-1}(i_{p-1}+1)} \cdots a_{\sigma^{-1}(j-1)}}
    \state{R( a_{\sigma^{-1}(i_{m}-1)},\mu(m))a_{\sigma^{-1}(j)}}\\
    & \cdot \state{a_{\sigma^{-1}(j+1)}\cdots \mu(p+1) \cdots \cdots \mu(n) a_{\sigma^{-1}(2k+n)}  }\bigg{)} 
    \end{aligned}\\
    & \begin{aligned}
    \quad - \sum_{p=m+1}^{n}\sum_{\substack{a_{2l-1}=a_{2l}=1\\l=1,...,k}}^n \sum_{\mu\in \text{Sym}_n} (-1)^\mu (\nabla^{2k+n-2}f)&\bigg{(}\state{ a_{\sigma^{-1}(1)} \cdots \mu(1) \cdots \mu(m-1) \cdots \mu(m+1) \cdots \mu(p-1)}\\&\cdot \state{ a_{\sigma^{-1}(i_{p-1}+1)} \cdots a_{\sigma^{-1}(i_{p}-1)}}
    \state{R( a_{\sigma^{-1}(i_{m}-1)}, \mu(m))\mu(p)}\\
    & \cdot \state{a_{\sigma^{-1}(i_p+1)}\cdots \mu(p+1) \cdots \cdots \mu(n) a_{\sigma^{-1}(2k+n)}  }\bigg{)} 
    \end{aligned}\\
    & \begin{aligned}
    \quad - \sum_{\substack{a_{2l-1}=a_{2l}=1\\l=1,...,k}}^n \sum_{\mu\in \text{Sym}_n}\sum_{j=i_n+1}^{2k+n} (-1)^\mu (\nabla^{2k+n-2}f)&\bigg{(}\state{ a_{\sigma^{-1}(1)} \cdots \mu(1) \cdots \mu(m-1) \cdots \mu(m+1) \cdots \mu(n)}\\
    & \cdot \state{a_{\sigma^{-1}(i_n-1)+1}\cdots  a_{\sigma^{-1}(j-1)}}    \state{R( a_{\sigma^{-1}(i_{m}-1)},\mu(m))a_{\sigma^{-1}(j)}}\\
    & \cdot \state{a_{\sigma^{-1}(j+1)}\cdots  a_{\sigma^{-1}(2k+n)}  }\bigg{)} 
    \end{aligned}
\end{align*}
Using $R(a_{\sigma^{-1}(i_m-1)}, \mu(m))=-R(\mu(m), a_{\sigma^{-1}(i_m-1)})$, this case can be handled similarly as in Case 6. We will not repeat the discussion here. 

\noindent\textbf{Case 8. } $ i = i_{m-1} = i_m - 1$ for $m=2, ..., n-1$. In this case, 
\begin{align}
    & \quad (\nabla^{2k+n}f)(\sigma(\theta^{\otimes k}\otimes \Lambda) - (\nabla^{2k+n}f)((i,i+1)\sigma(\theta^{\otimes k}\otimes \Lambda) \nonumber\\
    & \begin{aligned}
    = - \sum_{\substack{a_{2l-1}=a_{2l}=1\\l=1,...,k}}^n \sum_{\mu\in \text{Sym}_n}\sum_{j=i+2}^{i_{m+1}-1} (-1)^\mu (\nabla^{2k+n-2}f)&\bigg{(}\state{ a_{\sigma^{-1}(1)} \cdots \mu(1) \cdots \cdots \mu(m-2) \cdots a_{\sigma^{-1}(i_{m-1}-1)}}\\ & \cdot
    \state{R(\mu(m-1), \mu(m))a_{\sigma^{-1}(j)}}\\
    & \cdot \state{a_{\sigma^{-1}(j+1)}\cdots \mu(m+1) \cdots \cdots \mu(n) a_{\sigma^{-1}(2k+n)}  }\bigg{)} 
    \end{aligned}\label{difference-9-1}\\
    & \begin{aligned}
    \quad - \sum_{\substack{a_{2l-1}=a_{2l}=1\\l=1,...,k}}^n \sum_{\mu\in \text{Sym}_n} (-1)^\mu (\nabla^{2k+n-2}f)&\bigg{(}\state{ a_{\sigma^{-1}(1)} \cdots \mu(1) \cdots \cdots \mu(m-2) \cdots a_{\sigma^{-1}(i_{m}-2)}}\\ & \cdot
    \state{R(\mu(m-1), \mu(m))\mu(m+1)}\\
    & \cdot \state{a_{\sigma^{-1}(j+1)}\cdots \mu(m+1) \cdots \cdots \mu(n) a_{\sigma^{-1}(2k+n)}  }\bigg{)} 
    \end{aligned}\label{difference-9-2}\\
    & \begin{aligned}
    \quad - \sum_{p=m+1}^{n-1}\sum_{\substack{a_{2l-1}=a_{2l}=1\\l=1,...,k}}^n \sum_{\mu\in \text{Sym}_n}\sum_{j=i_p+1}^{i_{p+1}-1} (-1)^\mu (\nabla^{2k+n-2}f)&\bigg{(}\state{ a_{\sigma^{-1}(1)} \cdots \mu(1) \cdots \mu(m-2) \cdots \mu(m+1) \cdots \mu(p-1)} \\ & \cdot \state{a_{\sigma^{-1}(i_{p-1}+1)} \cdots a_{\sigma^{-1}(j-1)}}
    \state{R(\mu(m-1), \mu(m))a_{\sigma^{-1}(j)}}\\
    & \cdot \state{a_{\sigma^{-1}(j+1)}\cdots \mu(p+1) \cdots \cdots \mu(n) a_{\sigma^{-1}(2k+n)}  }\bigg{)} 
    \end{aligned}\label{difference-9-3}\\
    & \begin{aligned}
    \quad - \sum_{p=m+2}^{n}\sum_{\substack{a_{2l-1}=a_{2l}=1\\l=1,...,k}}^n \sum_{\mu\in \text{Sym}_n} (-1)^\mu (\nabla^{2k+n-2}f)&\bigg{(}\state{ a_{\sigma^{-1}(1)} \cdots \mu(1) \cdots \mu(m-2) \cdots \mu(m+1) \cdots \mu(p-1)}\\ & \cdot \state{a_{\sigma^{-1}(i_{p-1}+1)} \cdots a_{\sigma^{-1}(i_{p}-1)}}
    \state{R(\mu(m-1), \mu(m))\mu(p)}\\
    & \cdot \state{a_{\sigma^{-1}(i_p+1)}\cdots \mu(p+1) \cdots \cdots \mu(n) a_{\sigma^{-1}(2k+n)}  }\bigg{)} 
    \end{aligned}\label{difference-9-4}\\
    & \begin{aligned}
    \quad - \sum_{\substack{a_{2l-1}=a_{2l}=1\\l=1,...,k}}^n \sum_{\mu\in \text{Sym}_n}\sum_{j=i_n+1}^{2k+n} (-1)^\mu (\nabla^{2k+n-2}f)&\bigg{(}\state{ a_{\sigma^{-1}(1)} \cdots \mu(1) \cdots \mu(m-2) \cdots \mu(m+1) \cdots \mu(n)}\\
    & \cdot \state{a_{\sigma^{-1}(i_n+1)}\cdots a_{\sigma^{-1}(j-1)}}    \state{R(\mu(m-1), \mu(m))a_{\sigma^{-1}(j)}}\\
    & \cdot \state{a_{\sigma^{-1}(j+1)}\cdots  a_{\sigma^{-1}(2k+n)}  }\bigg{)} 
    \end{aligned}\label{difference-9-5}
\end{align}
The sums (\ref{difference-9-1}), (\ref{difference-9-3}) and (\ref{difference-9-5}) can all be handled similarly as in Case 4, while (\ref{difference-9-2}) and (\ref{difference-9-4}) are all zero, as $\mu(m-1), \mu(m)$ and $\mu(p)$ are all distinct for $p=m+1, ..., n$. 

\noindent\textit{Proof of the Theorem.} Write $\sigma$ as a product of transpositions and proceed with induction similar to Theorem \ref{4-2}. We will not repeat the arguments here.
\end{proof}

In conclusion, we have the following theorem. 

\begin{thm}\label{ParallelScalar}
Let $f$ be an eigenfunction of the Laplace-Beltrami operator. Then as a vector space, $\Pi(T(TM))f = \C f$.  
\end{thm}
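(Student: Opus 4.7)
The plan is to assemble the already-established pieces: the structure theorem for parallel tensors from Section~2, together with Theorem~\ref{4-2} and the non-$O(n,\R)$-invariant theorem just proved. First I would write $\Pi(\Ten(TM^\C)) = \bigoplus_{r\geq 0} \Pi((TM^\C)^{\otimes r})$ and recall, via Lemma~\ref{HolTMr} together with Lemmas~\ref{3-9}, \ref{3-10} and the subsequent unitary-trick proposition, that each graded piece $\Pi((TM^\C)^{\otimes r})$ is spanned by permutations of $\theta^{\otimes k}$ (when $r=2k$), supplemented by permutations of $\theta^{\otimes k}\otimes \Lambda$ (when $r=2k+n$ and $M$ is orientable), and is zero otherwise. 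The degree-zero summand consists of scalars, which act on $f$ by multiplication and manifestly preserve $\C f$.

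Next I would feed each spanning element into the appropriate computation just finished. By Theorem~\ref{4-2}, every $O(n,\R)$-invariant parallel tensor $\sigma(\theta^{\otimes k})$ satisfies $(\nabla^{2k}f)(\sigma(\theta^{\otimes k})) \in \C f$. By the non-$O(n,\R)$-invariant theorem above, every tensor of the form $\sigma(\theta^{\otimes k}\otimes \Lambda)$ satisfies $(\nabla^{2k+n}f)(\sigma(\theta^{\otimes k}\otimes \Lambda)) = 0 \in \C f$. Extending by $\C$-linearity, every element of $\Pi((TM^\C)^{\otimes r})$ sends $f$ into $\C f$, and summing the actions across all degrees $r$ gives $\Pi(\Ten(TM^\C))\cdot f \subset \C f$. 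The reverse inclusion is trivial because the unit $1 \in \Pi((TM^\C)^{\otimes 0})$ fixes $f$, so $\C f \subset \Pi(\Ten(TM^\C))\cdot f$, and equality follows.

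I do not anticipate any obstacle: essentially all the substantive combinatorial and curvature-theoretic work was absorbed into Theorem~\ref{4-2} and its non-invariant analogue, and into the invariant-theoretic description of $\Pi((TM^\C)^{\otimes r})$ from Section~2. This concluding theorem is therefore just the assembly of those inputs, with one minor bookkeeping point worth flagging explicitly, namely that the two cases (orientable versus non-orientable) are handled uniformly since in the non-orientable case the $\Lambda$-family simply does not contribute.
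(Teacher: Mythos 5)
Your proposal is correct and matches the paper's intent exactly: the paper states Theorem \ref{ParallelScalar} as an immediate consequence ("In conclusion...") of the spanning-set description of $\Pi((TM^\C)^{\otimes r})$ from Section 2 together with Theorem \ref{4-2} and the vanishing theorem for the non-$O(n,\R)$-invariant tensors, which is precisely the assembly you carry out. Your explicit remarks on the degree-zero summand, the reverse inclusion, and the uniform treatment of the orientable and non-orientable cases are all sound.
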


\begin{rema}
The same argument also shows that 
$$\Pi(T(TM))f = \R f$$
for eigenfunctions $f$ with real eigenvalues. 
\end{rema}

\section{Scalars of $O(n,\R)$-invariant tensors}

In this section we study the scalar given by the action of $O(n, \R)$-invariant tensors. The proof of Theorem \ref{4-2} showed that the scalar is indeed a polynomial in $\lambda$. We will study the polynomial by an alternative combinatorial characterization. 

\subsection{Words, their graph and parallel tensors}

\begin{defn}
A word is a sequence of letters where each letter appears exactly twice. The number of letters appearing in a word is called the length of the word. It is clear that the length of a word must be even. For each word of length $2k$, we associate a graph with $2k$ vertices. An edge exists between two vertices if and only if the letters associated with the vertices are the same. 
\end{defn}

For example, the word $``aabccb"$ is associated with the following graph\\

\begin{center}
\begin{tikzpicture}[line cap=round,line join=round,>=triangle 45,x=1cm,y=1cm, scale = 0.5]
\draw [shift={(1.5,0)},line width=2pt]  plot[domain=0:3.141592653589793,variable=\t]({1*0.5*cos(\t r)+0*0.5*sin(\t r)},{0*0.5*cos(\t r)+1*0.5*sin(\t r)});
\draw [shift={(4.5,0)},line width=2pt]  plot[domain=0:3.141592653589793,variable=\t]({1*1.5*cos(\t r)+0*1.5*sin(\t r)},{0*1.5*cos(\t r)+1*1.5*sin(\t r)});
\draw [shift={(4.5,0)},line width=2pt]  plot[domain=0:3.141592653589793,variable=\t]({1*0.5*cos(\t r)+0*0.5*sin(\t r)},{0*0.5*cos(\t r)+1*0.5*sin(\t r)});
\begin{scriptsize}
\draw [fill=ffffff] (1,0) circle (2.5pt);
\draw [fill=ffffff] (6,0) circle (2.5pt);
\draw [fill=ffffff] (2,0) circle (2.5pt);
\draw [fill=ffffff] (5,0) circle (2.5pt);
\draw [fill=ffffff] (3,0) circle (2.5pt);
\draw [fill=ffffff] (4,0) circle (2.5pt);
\end{scriptsize}
\end{tikzpicture}
\end{center}
Here we arrange the vertices all in a row and with edges lying above. 

We now associate each word with a parallel tensor. First, the word $``a_1a_2 \cdots a_{2k-1}a_{2k}"$ with $a_1=a_2, ..., a_{2k-1}=a_{2k}$ is associated with
$$\theta^k = \sum_{a_1=a_2=1}^n \cdots \sum_{\substack{a_{2k-1}=a_{2k}=1}}^n \state{a_1a_2\cdots a_{2k-1}a_{2k}}.$$
For a general word $``a_1a_2\cdots a_{2k-1}a_{2k}"$ with $a_{i_1}=a_{i_2}, ...., a_{i_{2k-1}}=a_{i_{2k}}$, let $\sigma\in \text{Sym}_{2k}$ be the inverse of the permutation $$\begin{pmatrix}
1 & 2 & \cdots & 2k-1 & 2k\\
i_1 & i_2 & \cdots & i_{2k-1} & i_{2k}
\end{pmatrix}.$$
The parallel tensor associated to this word is simply 
$$\sigma(\theta^k) = \sum_{a_1=a_2=1}^n \cdots \sum_{\substack{a_{2k-1}=a_{2k}=1}}^n \state{a_{\sigma^{-1}(1)}a_{\sigma^{-1}(2)}\cdots a_{\sigma^{-1}(2k-1)}a_{\sigma^{-1}(2k)}}.$$
For example the parallel tensors associated to the word $``aabccb"$ is simply 
$$\sum_{a_1=a_2=1}^n \sum_{a_3=a_4=1}^n \sum_{a_{5}=a_{6}=1}^n \state{a_1a_2 a_3 a_5a_6 a_4}$$
Moreover, if we arrange the vertices of the graph of the word in two rows, with first $k$ vertices in the first and last $k$ vertices in the second, then we get the $(k, k)$-Brauer diagrams for parallel tensors. Here is the example for $``aabccb"$: \\

\begin{center}
    \begin{tikzpicture}[line cap=round,line join=round,>=triangle 45,x=1cm,y=1cm, scale = 0.5]
\draw [shift={(1.5,0)},line width=2pt]  plot[domain=3.141592653589793:2*3.141592653589793,variable=\t]({1*0.5*cos(\t r)+0*0.5*sin(\t r)},{0*0.5*cos(\t r)+1*0.5*sin(\t r)});
\draw [shift={(1.5,-2)},line width=2pt]  plot[domain=0:3.141592653589793,variable=\t]({1*0.5*cos(\t r)+0*0.5*sin(\t r)},{0*0.5*cos(\t r)+1*0.5*sin(\t r)});
\draw [line width=2pt] (3,0)-- (3,-2);
\begin{scriptsize}
\draw [fill=ffffff] (1,0) circle (2.5pt);
\draw [fill=ffffff] (3,-2) circle (2.5pt);
\draw [fill=ffffff] (2,-2) circle (2.5pt);
\draw [fill=ffffff] (2,0) circle (2.5pt);
\draw [fill=ffffff] (3,0) circle (2.5pt);
\draw [fill=ffffff] (1,-2) circle (2.5pt);
\end{scriptsize}
\end{tikzpicture}
\end{center}
It is well known that for each $k\in \Z_+$, the number of such words with length $2k$ is 
$$(2k-1)!! = 1 \cdot 3 \cdot 5 \cdot \cdots \cdot (2k-3) \cdot (2k-1). $$
It is also shown in \cite{LZ-Annals} that parallel tensors associated with $(k,k)$-Brauer diagrams span the space of parallel tensors. The spanning set is linearly independent if $k\leq n$. When $k\geq n+1$, a linear relation among these spanning tensors can also be described. Thus the parallel tensors can be studied by using the representation by words or by the graph. 

\subsection{Action of parallel tensors as a polynomial of $\theta$} Let $``a_1a_2 \cdots a_{2k-1}a_{2k}"$ be a word. As shown in Theorem \ref{4-2}, the associated parallel tensor acts on $f$ as a scalar. The same argument also shows that the scalar is a polynomial in the eigenvalue of $\lambda$. 

\begin{nota}
Let $``a_1 a_2 \cdots a_{2k-1}a_{2k}"$ be a word. We abuse the notation $\left|a_1 a_2 \cdots a_{2k-1}a_{2k}\right\rangle$ for the scalar of the action by the associated parallel tensor. If $``b_1b_2 \cdots b_{2l-1}b_{2l}"$ is another word, then it follows from Formula (\ref{Assoc}) that 
\begin{align} \label{Brauer-tensor}
    |a_1 a_2 \cdots a_{2k-1}a_{2k}b_1b_2 \cdots b_{2l-1}b_{2l}\rangle = |a_1 a_2 \cdots a_{2k-1}a_{2k}\rangle \cdot |b_1b_2 \cdots b_{2l-1}b_{2l}\rangle
\end{align}
We will also use the bracket notation to denote the ``commutator'', namely, 
\begin{align*}
    |a_1 \cdots a_{i-1}[a_i a_{i+1}]a_{i+2} \cdots a_{2k}\rangle & := 
    |a_1 \cdots a_{i-1}a_i a_{i+1} a_{i+2}\cdots a_{2k}\rangle - |a_1 \cdots a_{i-1} a_{i+1} a_i a_{i+2}\cdots a_{2k}\rangle
\end{align*}
For convenience, we will use $\theta$ as the variable of the polynomial appearing in $\left|a_1 a_2 \cdots a_{2k-1}a_{2k}\right\rangle$ instead of $\lambda$. 
\end{nota}

We now describe the algorithm of obtaining this polynomial in terms of recursive operations on words. 
\begin{prop}
For every word $``a_1a_2 \cdots a_{2k-1}a_{2k}"$, the polynomial $|a_1a_2 \cdots a_{2k-1}a_{2k}\rangle$ can be obtained via the following recursion: 
\begin{align}
    & |a_1a_1a_2a_2 \cdots a_ka_k\rangle = \theta^k\\
    & |a_1 \cdots a_{2k-2} a_{2k-1}a_{2k}\rangle = |a_1 \cdots a_{2k-2} a_{2k}a_{2k-1}\rangle \\
    & |a_1 \cdots a_{i-1} [a_{i}a_{i+1}]a_{i+2} \cdots a_{2k}\rangle = -\sum_{j=i+2}^{2k} |a_1\cdots a_{i-1} a_{i+2} \cdots a_{j-1}, R(a_{i},a_{i+1})a_j, a_{j+1}\cdots a_{2k}\rangle \label{recur-2}
\end{align}
Here for each $j= i+2, ..., 2k$ 
\begin{align*}
    & |a_1\cdots a_{i-1} a_{i+2} \cdots a_{j-1}, R(a_{i},a_{i+1})a_j, a_{j+1}\cdots a_{2k}\rangle \\
    & = \left\{\begin{aligned}
    & - K(n-1)|a_1\cdots a_{i-1} a_{i+2} \cdots a_{j-1}, a_{i+1}, a_{j+1}\cdots a_{2k}\rangle && \text{ if }a_j = a_{i},\\
    & K(n-1)|a_1\cdots a_{i-1} a_{i+2} \cdots a_{j-1}, a_{i}, a_{j+1}\cdots a_{2k}\rangle && \text{ if }a_j = a_{i+1},\\
    & K \state{I_{a_ja_{i+1}} a_1 \cdots a_{i-1}a_{i+2} \cdots a_{j-1}, a_i, a_{j+1} \cdots a_{2k}} & \\
    & \quad - K \state{I_{a_ja_i} a_1 \cdots a_{i-1}a_{i+2} \cdots a_{j-1}, a_{i+1}, a_{j+1} \cdots a_{2k}} && \text{ otherwise.} 
    \end{aligned}\right. 
\end{align*}
where in the last formula, the operator $I_{a_j c}$ replaces the other occurrence of $a_j$ by $c$, for $c=a_i$ or $a_{i+1}$. 
\end{prop}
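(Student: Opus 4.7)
The proposition compiles four separate computations; my plan is to verify each in turn against tools already established.

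The base case $|a_1 a_1 \cdots a_k a_k\rangle = \theta^k$ reduces to the associativity identity (\ref{Assoc}). That identity gives $\psi_U(\theta^{\otimes k}) f = \psi_U(\theta)^k f$, and since $\psi_U(\theta) f = \Delta f = -\lambda f$, iteration yields $(-\lambda)^k f$; under the substitution $\theta := -\lambda$ used as the polynomial variable, this is $\theta^k$. The swap rule for the last two letters is immediate from the first half of Theorem~\ref{Comm-Cov}, which says the last two arguments of $\nabla^{2k} f$ commute. The commutator rule (\ref{recur-2}) is the word-level transcription of the second half of Theorem~\ref{Comm-Cov}: summing the pointwise identity over all index values encoded by the word produces exactly (\ref{recur-2}), with the $-\sum_{j=i+2}^{2k}$ accounting for the sign on $-R(Z_i, Z_{i+1})Z_j$.

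The evaluation of $|\ldots R(a_i, a_{i+1}) a_j \ldots|$ uses the explicit curvature formulas of Subsection~\ref{R-tensor}: $R(X_u, X_v) X_u = -K X_v$, $R(X_u, X_v) X_v = K X_u$ for $u \neq v$, and $R(X_u, X_v) X_w = 0$ for pairwise distinct $u, v, w$. I would case-split on whether the \emph{letter} $a_j$ in the word coincides with $a_i$, with $a_{i+1}$, or with neither. If the letter $a_j$ equals $a_i$ (so positions $i$ and $j$ were paired in the word), the summation variable $a_j$ is forced equal to $a_i$, and only values $a_i \neq a_{i+1}$ contribute, producing $-K X_{a_{i+1}}$ at slot $j$. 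Since the letter $a_i$ no longer appears anywhere in the reduced word, summing $a_i$ freely over the $n-1$ index values different from $a_{i+1}$ yields the factor $-K(n-1)$. The case $a_j = a_{i+1}$ is symmetric, with sign $+K(n-1)$.

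If the letter $a_j$ is distinct from both $a_i$ and $a_{i+1}$, then $a_j$ remains a free summation variable, and only the numerical coincidences $a_j = a_i$ or $a_j = a_{i+1}$ yield nonzero curvature terms. These coincidences collapse the pair-partner position $j'$ of $a_j$ to carry the label $a_i$ or $a_{i+1}$ respectively, which is precisely what the operator $I_{a_j c}$ encodes; the two resulting summands match the two-term expression in the proposition. The main delicate point in the proof is this last case analysis: tracking how the pair-structure of the word is inherited after a summation variable collapses to another index, and pinning down the $(n-1)$ factor that appears whenever a letter is eliminated entirely. None of the individual steps is deep, but this bookkeeping is the only part that requires genuine care.
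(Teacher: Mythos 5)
Your argument is correct and is precisely the ``straightforward computation'' the paper leaves implicit: the three recursion rules are the word-level transcriptions of Formula (\ref{Assoc}) and of the two halves of Theorem \ref{Comm-Cov}, and the three-way evaluation of the curvature slot follows from the constant-curvature formulas of Subsection \ref{R-tensor} combined with the pair structure of the word, with the $(n-1)$ factor arising exactly as you describe when a letter is eliminated. The one point worth making explicit in your ``otherwise'' case is that the two displayed terms of the stated formula implicitly sum over \emph{all} index assignments, including the diagonal where the indices of $a_i$ and $a_{i+1}$ coincide (where the curvature vanishes, so your derivation naturally produces sums restricted to distinct indices); the two spurious diagonal contributions are identical tensors appearing with coefficients $+K$ and $-K$ and hence cancel, so the unrestricted sums in the proposition agree with the restricted ones your computation yields.
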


\begin{proof}
This follows from a straightforward computation. 
\end{proof}

\subsection{Examples}
Here we list the polynomials obtained for words of length 2, 4 and 6: 

\renewcommand\arraystretch{3.8}
\begin{center}
\begin{longtable}{c c c}
Words & Graph & Polynomial\\
\hline 
$|aa\rangle$ &\raisebox{-.3\height}{\begin{tikzpicture}[line cap=round,line join=round,>=triangle 45,x=1cm,y=1cm, scale=0.5]
\draw [shift={(2.5,0)},line width=2pt]  plot[domain=0:3.141592653589793,variable=\t]({1*0.5*cos(\t r)+0*0.5*sin(\t r)},{0*0.5*cos(\t r)+1*0.5*sin(\t r)});
\begin{scriptsize}
\draw [fill=ffffff] (2,0) circle (2.5pt);
\draw [fill=ffffff] (3,0) circle (2.5pt);
\end{scriptsize}
\end{tikzpicture}} & $\theta$
\\
\hline 
$|aabb\rangle$ &\raisebox{-.3\height}{\begin{tikzpicture}[line cap=round,line join=round,>=triangle 45,x=1cm,y=1cm, scale = 0.5]
\draw [shift={(1.5,0)},line width=2pt]  plot[domain=0:3.141592653589793,variable=\t]({1*0.5*cos(\t r)+0*0.5*sin(\t r)},{0*0.5*cos(\t r)+1*0.5*sin(\t r)});
\draw [shift={(3.5,0)},line width=2pt]  plot[domain=0:3.141592653589793,variable=\t]({1*0.5*cos(\t r)+0*0.5*sin(\t r)},{0*0.5*cos(\t r)+1*0.5*sin(\t r)});
\begin{scriptsize}
\draw [fill=ffffff] (1,0) circle (2.5pt);
\draw [fill=ffffff] (2,0) circle (2.5pt);
\draw [fill=ffffff] (3,0) circle (2.5pt);
\draw [fill=ffffff] (4,0) circle (2.5pt);
\end{scriptsize}
\end{tikzpicture}} & $\theta^2$
\\
\hline 
$|abab\rangle$ &\raisebox{-.3\height}{\begin{tikzpicture}[line cap=round,line join=round,>=triangle 45,x=1cm,y=1cm, scale = 0.5]
\draw [shift={(2,0)},line width=2pt]  plot[domain=0:3.141592653589793,variable=\t]({1*1*cos(\t r)+0*1*sin(\t r)},{0*1*cos(\t r)+1*1*sin(\t r)});
\draw [shift={(3,0)},line width=2pt]  plot[domain=0:3.141592653589793,variable=\t]({1*1*cos(\t r)+0*1*sin(\t r)},{0*1*cos(\t r)+1*1*sin(\t r)});
\begin{scriptsize}
\draw [fill=ffffff] (1,0) circle (2.5pt);
\draw [fill=ffffff] (2,0) circle (2.5pt);
\draw [fill=ffffff] (3,0) circle (2.5pt);
\draw [fill=ffffff] (4,0) circle (2.5pt);
\end{scriptsize}
\end{tikzpicture}} & $\theta(\theta+K(n-1))$
\\
\hline 
$|abba\rangle$ &\raisebox{-.3\height}{\begin{tikzpicture}[line cap=round,line join=round,>=triangle 45,x=1cm,y=1cm, scale = 0.5]
\draw [shift={(2.5,0)},line width=2pt]  plot[domain=0:3.141592653589793,variable=\t]({1*1.5*cos(\t r)+0*1.5*sin(\t r)},{0*1.5*cos(\t r)+1*1.5*sin(\t r)});
\draw [shift={(2.5,0)},line width=2pt]  plot[domain=0:3.141592653589793,variable=\t]({1*0.5*cos(\t r)+0*0.5*sin(\t r)},{0*0.5*cos(\t r)+1*0.5*sin(\t r)});
\begin{scriptsize}
\draw [fill=ffffff] (1,0) circle (2.5pt);
\draw [fill=ffffff] (2,0) circle (2.5pt);
\draw [fill=ffffff] (3,0) circle (2.5pt);
\draw [fill=ffffff] (4,0) circle (2.5pt);
\end{scriptsize}
\end{tikzpicture}} & $\theta(\theta+K(n-1))$
\\

\hline 
$|aabbcc\rangle$ &\raisebox{-.3\height}{\begin{tikzpicture}[line cap=round,line join=round,>=triangle 45,x=1cm,y=1cm, scale = 0.5]
\draw [shift={(3.5,0)},line width=2pt]  plot[domain=0:3.141592653589793,variable=\t]({1*0.5*cos(\t r)+0*0.5*sin(\t r)},{0*0.5*cos(\t r)+1*0.5*sin(\t r)});
\draw [shift={(1.5,0)},line width=2pt]  plot[domain=0:3.141592653589793,variable=\t]({1*0.5*cos(\t r)+0*0.5*sin(\t r)},{0*0.5*cos(\t r)+1*0.5*sin(\t r)});
\draw [shift={(5.5,0)},line width=2pt]  plot[domain=0:3.141592653589793,variable=\t]({1*0.5*cos(\t r)+0*0.5*sin(\t r)},{0*0.5*cos(\t r)+1*0.5*sin(\t r)});
\begin{scriptsize}
\draw [fill=ffffff] (1,0) circle (2.5pt);
\draw [fill=ffffff] (6,0) circle (2.5pt);
\draw [fill=ffffff] (2,0) circle (2.5pt);
\draw [fill=ffffff] (5,0) circle (2.5pt);
\draw [fill=ffffff] (3,0) circle (2.5pt);
\draw [fill=ffffff] (4,0) circle (2.5pt);
\end{scriptsize}
\end{tikzpicture}} & $\theta^3$
\\

\hline 
$|aabcbc\rangle$ &\raisebox{-.3\height}{\begin{tikzpicture}[line cap=round,line join=round,>=triangle 45,x=1cm,y=1cm, scale = 0.5]
\draw [shift={(1.5,0)},line width=2pt]  plot[domain=0:3.141592653589793,variable=\t]({1*0.5*cos(\t r)+0*0.5*sin(\t r)},{0*0.5*cos(\t r)+1*0.5*sin(\t r)});
\draw [shift={(4,0)},line width=2pt]  plot[domain=0:3.141592653589793,variable=\t]({1*1*cos(\t r)+0*1*sin(\t r)},{0*1*cos(\t r)+1*1*sin(\t r)});
\draw [shift={(5,0)},line width=2pt]  plot[domain=0:3.141592653589793,variable=\t]({1*1*cos(\t r)+0*1*sin(\t r)},{0*1*cos(\t r)+1*1*sin(\t r)});
\begin{scriptsize}
\draw [fill=ffffff] (1,0) circle (2.5pt);
\draw [fill=ffffff] (6,0) circle (2.5pt);
\draw [fill=ffffff] (2,0) circle (2.5pt);
\draw [fill=ffffff] (5,0) circle (2.5pt);
\draw [fill=ffffff] (3,0) circle (2.5pt);
\draw [fill=ffffff] (4,0) circle (2.5pt);
\end{scriptsize}
\end{tikzpicture}} & $\theta^2(\theta+K(n-1))$
\\

\hline 
$|aabccb\rangle$ &\raisebox{-.3\height}{\begin{tikzpicture}[line cap=round,line join=round,>=triangle 45,x=1cm,y=1cm, scale = 0.5]
\draw [shift={(1.5,0)},line width=2pt]  plot[domain=0:3.141592653589793,variable=\t]({1*0.5*cos(\t r)+0*0.5*sin(\t r)},{0*0.5*cos(\t r)+1*0.5*sin(\t r)});
\draw [shift={(4.5,0)},line width=2pt]  plot[domain=0:3.141592653589793,variable=\t]({1*1.5*cos(\t r)+0*1.5*sin(\t r)},{0*1.5*cos(\t r)+1*1.5*sin(\t r)});
\draw [shift={(4.5,0)},line width=2pt]  plot[domain=0:3.141592653589793,variable=\t]({1*0.5*cos(\t r)+0*0.5*sin(\t r)},{0*0.5*cos(\t r)+1*0.5*sin(\t r)});
\begin{scriptsize}
\draw [fill=ffffff] (1,0) circle (2.5pt);
\draw [fill=ffffff] (6,0) circle (2.5pt);
\draw [fill=ffffff] (2,0) circle (2.5pt);
\draw [fill=ffffff] (5,0) circle (2.5pt);
\draw [fill=ffffff] (3,0) circle (2.5pt);
\draw [fill=ffffff] (4,0) circle (2.5pt);
\end{scriptsize}
\end{tikzpicture}} & $\theta^2(\theta+K(n-1))$
\\

\hline 
$|ababcc\rangle$ &\raisebox{-.3\height}{\begin{tikzpicture}[line cap=round,line join=round,>=triangle 45,x=1cm,y=1cm, scale = 0.5]
\draw [shift={(2,0)},line width=2pt]  plot[domain=0:3.141592653589793,variable=\t]({1*1*cos(\t r)+0*1*sin(\t r)},{0*1*cos(\t r)+1*1*sin(\t r)});
\draw [shift={(5.5,0)},line width=2pt]  plot[domain=0:3.141592653589793,variable=\t]({1*0.5*cos(\t r)+0*0.5*sin(\t r)},{0*0.5*cos(\t r)+1*0.5*sin(\t r)});
\draw [shift={(3,0)},line width=2pt]  plot[domain=0:3.141592653589793,variable=\t]({1*1*cos(\t r)+0*1*sin(\t r)},{0*1*cos(\t r)+1*1*sin(\t r)});
\begin{scriptsize}
\draw [fill=ffffff] (1,0) circle (2.5pt);
\draw [fill=ffffff] (6,0) circle (2.5pt);
\draw [fill=ffffff] (2,0) circle (2.5pt);
\draw [fill=ffffff] (5,0) circle (2.5pt);
\draw [fill=ffffff] (3,0) circle (2.5pt);
\draw [fill=ffffff] (4,0) circle (2.5pt);
\end{scriptsize}
\end{tikzpicture}} & $\theta^2(\theta+K(n-1))$
\\
\hline 
$|abacbc\rangle$ &\raisebox{-.3\height}{\begin{tikzpicture}[line cap=round,line join=round,>=triangle 45,x=1cm,y=1cm, scale = 0.5]
\draw [shift={(2,0)},line width=2pt]  plot[domain=0:3.141592653589793,variable=\t]({1*1*cos(\t r)+0*1*sin(\t r)},{0*1*cos(\t r)+1*1*sin(\t r)});
\draw [shift={(3.5,0)},line width=2pt]  plot[domain=0:3.141592653589793,variable=\t]({1*1.5*cos(\t r)+0*1.5*sin(\t r)},{0*1.5*cos(\t r)+1*1.5*sin(\t r)});
\draw [shift={(5,0)},line width=2pt]  plot[domain=0:3.141592653589793,variable=\t]({1*1*cos(\t r)+0*1*sin(\t r)},{0*1*cos(\t r)+1*1*sin(\t r)});
\begin{scriptsize}
\draw [fill=ffffff] (1,0) circle (2.5pt);
\draw [fill=ffffff] (6,0) circle (2.5pt);
\draw [fill=ffffff] (2,0) circle (2.5pt);
\draw [fill=ffffff] (5,0) circle (2.5pt);
\draw [fill=ffffff] (3,0) circle (2.5pt);
\draw [fill=ffffff] (4,0) circle (2.5pt);
\end{scriptsize}
\end{tikzpicture}} & $\theta(\theta+K(n-1))^2$
\\
\hline 
$|abaccb\rangle$ &\raisebox{-.3\height}{\begin{tikzpicture}[line cap=round,line join=round,>=triangle 45,x=1cm,y=1cm, scale = 0.5]
\draw [shift={(2,0)},line width=2pt]  plot[domain=0:3.141592653589793,variable=\t]({1*1*cos(\t r)+0*1*sin(\t r)},{0*1*cos(\t r)+1*1*sin(\t r)});
\draw [shift={(4,0)},line width=2pt]  plot[domain=0:3.141592653589793,variable=\t]({1*2*cos(\t r)+0*2*sin(\t r)},{0*2*cos(\t r)+1*2*sin(\t r)});
\draw [shift={(4.5,0)},line width=2pt]  plot[domain=0:3.141592653589793,variable=\t]({1*0.5*cos(\t r)+0*0.5*sin(\t r)},{0*0.5*cos(\t r)+1*0.5*sin(\t r)});
\begin{scriptsize}
\draw [fill=ffffff] (1,0) circle (2.5pt);
\draw [fill=ffffff] (6,0) circle (2.5pt);
\draw [fill=ffffff] (2,0) circle (2.5pt);
\draw [fill=ffffff] (5,0) circle (2.5pt);
\draw [fill=ffffff] (3,0) circle (2.5pt);
\draw [fill=ffffff] (4,0) circle (2.5pt);
\end{scriptsize}
\end{tikzpicture}} & $\theta(\theta+K(n-1))^2$
\\
\hline 
$|abbacc\rangle$ &\raisebox{-.3\height}{\begin{tikzpicture}[line cap=round,line join=round,>=triangle 45,x=1cm,y=1cm, scale = 0.5]
\draw [shift={(2.5,0)},line width=2pt]  plot[domain=0:3.141592653589793,variable=\t]({1*1.5*cos(\t r)+0*1.5*sin(\t r)},{0*1.5*cos(\t r)+1*1.5*sin(\t r)});
\draw [shift={(2.5,0)},line width=2pt]  plot[domain=0:3.141592653589793,variable=\t]({1*0.5*cos(\t r)+0*0.5*sin(\t r)},{0*0.5*cos(\t r)+1*0.5*sin(\t r)});
\draw [shift={(5.5,0)},line width=2pt]  plot[domain=0:3.141592653589793,variable=\t]({1*0.5*cos(\t r)+0*0.5*sin(\t r)},{0*0.5*cos(\t r)+1*0.5*sin(\t r)});
\begin{scriptsize}
\draw [fill=ffffff] (1,0) circle (2.5pt);
\draw [fill=ffffff] (6,0) circle (2.5pt);
\draw [fill=ffffff] (2,0) circle (2.5pt);
\draw [fill=ffffff] (5,0) circle (2.5pt);
\draw [fill=ffffff] (3,0) circle (2.5pt);
\draw [fill=ffffff] (4,0) circle (2.5pt);
\end{scriptsize}
\end{tikzpicture}} & $\theta^2(\theta+K(n-1))$
\\
\hline 
$|abcabc\rangle$ &\raisebox{-.3\height}{\begin{tikzpicture}[line cap=round,line join=round,>=triangle 45,x=1cm,y=1cm, scale = 0.5]
\draw [shift={(2.5,0)},line width=2pt]  plot[domain=0:3.141592653589793,variable=\t]({1*1.5*cos(\t r)+0*1.5*sin(\t r)},{0*1.5*cos(\t r)+1*1.5*sin(\t r)});
\draw [shift={(3.5,0)},line width=2pt]  plot[domain=0:3.141592653589793,variable=\t]({1*1.5*cos(\t r)+0*1.5*sin(\t r)},{0*1.5*cos(\t r)+1*1.5*sin(\t r)});
\draw [shift={(4.5,0)},line width=2pt]  plot[domain=0:3.141592653589793,variable=\t]({1*1.5*cos(\t r)+0*1.5*sin(\t r)},{0*1.5*cos(\t r)+1*1.5*sin(\t r)});
\begin{scriptsize}
\draw [fill=ffffff] (1,0) circle (2.5pt);
\draw [fill=ffffff] (6,0) circle (2.5pt);
\draw [fill=ffffff] (2,0) circle (2.5pt);
\draw [fill=ffffff] (5,0) circle (2.5pt);
\draw [fill=ffffff] (3,0) circle (2.5pt);
\draw [fill=ffffff] (4,0) circle (2.5pt);
\end{scriptsize}
\end{tikzpicture}} & $\theta(\theta^2 + 3K(n-1)\theta + K^2(n-1)(2n-1))$
\\
\hline 
$|abcacb\rangle$ &\raisebox{-.3\height}{\begin{tikzpicture}[line cap=round,line join=round,>=triangle 45,x=1cm,y=1cm, scale = 0.5]
\draw [shift={(2.5,0)},line width=2pt]  plot[domain=0:3.141592653589793,variable=\t]({1*1.5*cos(\t r)+0*1.5*sin(\t r)},{0*1.5*cos(\t r)+1*1.5*sin(\t r)});
\draw [shift={(4,0)},line width=2pt]  plot[domain=0:3.141592653589793,variable=\t]({1*2*cos(\t r)+0*2*sin(\t r)},{0*2*cos(\t r)+1*2*sin(\t r)});
\draw [shift={(4,0)},line width=2pt]  plot[domain=0:3.141592653589793,variable=\t]({1*1*cos(\t r)+0*1*sin(\t r)},{0*1*cos(\t r)+1*1*sin(\t r)});
\begin{scriptsize}
\draw [fill=ffffff] (1,0) circle (2.5pt);
\draw [fill=ffffff] (6,0) circle (2.5pt);
\draw [fill=ffffff] (2,0) circle (2.5pt);
\draw [fill=ffffff] (5,0) circle (2.5pt);
\draw [fill=ffffff] (3,0) circle (2.5pt);
\draw [fill=ffffff] (4,0) circle (2.5pt);
\end{scriptsize}
\end{tikzpicture}} & $\theta(\theta^2 + 3K(n-1)\theta + K^2(n-1)(2n-1))$
\\
\hline 
$|abbcac\rangle$ &\raisebox{-.3\height}{\begin{tikzpicture}[line cap=round,line join=round,>=triangle 45,x=1cm,y=1cm, scale = 0.5]
\draw [shift={(2.5,0)},line width=2pt]  plot[domain=0:3.141592653589793,variable=\t]({1*0.5*cos(\t r)+0*0.5*sin(\t r)},{0*0.5*cos(\t r)+1*0.5*sin(\t r)});
\draw [shift={(3,0)},line width=2pt]  plot[domain=0:3.141592653589793,variable=\t]({1*2*cos(\t r)+0*2*sin(\t r)},{0*2*cos(\t r)+1*2*sin(\t r)});
\draw [shift={(5,0)},line width=2pt]  plot[domain=0:3.141592653589793,variable=\t]({1*1*cos(\t r)+0*1*sin(\t r)},{0*1*cos(\t r)+1*1*sin(\t r)});
\begin{scriptsize}
\draw [fill=ffffff] (1,0) circle (2.5pt);
\draw [fill=ffffff] (6,0) circle (2.5pt);
\draw [fill=ffffff] (2,0) circle (2.5pt);
\draw [fill=ffffff] (5,0) circle (2.5pt);
\draw [fill=ffffff] (3,0) circle (2.5pt);
\draw [fill=ffffff] (4,0) circle (2.5pt);
\end{scriptsize}
\end{tikzpicture}
} & $\theta(\theta+K(n-1))^2$
\\
\hline 
$|abcbac\rangle$ &\raisebox{-.3\height}{\begin{tikzpicture}[line cap=round,line join=round,>=triangle 45,x=1cm,y=1cm, scale = 0.5]
\draw [shift={(3,0)},line width=2pt]  plot[domain=0:3.141592653589793,variable=\t]({1*2*cos(\t r)+0*2*sin(\t r)},{0*2*cos(\t r)+1*2*sin(\t r)});
\draw [shift={(3,0)},line width=2pt]  plot[domain=0:3.141592653589793,variable=\t]({1*1*cos(\t r)+0*1*sin(\t r)},{0*1*cos(\t r)+1*1*sin(\t r)});
\draw [shift={(4.5,0)},line width=2pt]  plot[domain=0:3.141592653589793,variable=\t]({1*1.5*cos(\t r)+0*1.5*sin(\t r)},{0*1.5*cos(\t r)+1*1.5*sin(\t r)});
\begin{scriptsize}
\draw [fill=ffffff] (1,0) circle (2.5pt);
\draw [fill=ffffff] (6,0) circle (2.5pt);
\draw [fill=ffffff] (2,0) circle (2.5pt);
\draw [fill=ffffff] (5,0) circle (2.5pt);
\draw [fill=ffffff] (3,0) circle (2.5pt);
\draw [fill=ffffff] (4,0) circle (2.5pt);
\end{scriptsize}
\end{tikzpicture}} & $\theta(\theta^2 + 3K(n-1)\theta + K^2(n-1)(2n-1))$
\\
\hline 
$|abccab\rangle$ &\raisebox{-.3\height}{\begin{tikzpicture}[line cap=round,line join=round,>=triangle 45,x=1cm,y=1cm, scale = 0.5]
\draw [shift={(3,0)},line width=2pt]  plot[domain=0:3.141592653589793,variable=\t]({1*2*cos(\t r)+0*2*sin(\t r)},{0*2*cos(\t r)+1*2*sin(\t r)});
\draw [shift={(4,0)},line width=2pt]  plot[domain=0:3.141592653589793,variable=\t]({1*2*cos(\t r)+0*2*sin(\t r)},{0*2*cos(\t r)+1*2*sin(\t r)});
\draw [shift={(3.5,0)},line width=2pt]  plot[domain=0:3.141592653589793,variable=\t]({1*0.5*cos(\t r)+0*0.5*sin(\t r)},{0*0.5*cos(\t r)+1*0.5*sin(\t r)});
\begin{scriptsize}
\draw [fill=ffffff] (1,0) circle (2.5pt);
\draw [fill=ffffff] (6,0) circle (2.5pt);
\draw [fill=ffffff] (2,0) circle (2.5pt);
\draw [fill=ffffff] (5,0) circle (2.5pt);
\draw [fill=ffffff] (3,0) circle (2.5pt);
\draw [fill=ffffff] (4,0) circle (2.5pt);
\end{scriptsize}
\end{tikzpicture}} & $\theta(\theta^2 + 3K(n-1)\theta + 2K^2n(n-1)$
\\
\hline 
$|abbcca\rangle$ & \raisebox{-.3\height}{\begin{tikzpicture}[line cap=round,line join=round,>=triangle 45,x=1cm,y=1cm, scale = 0.5]
\draw [shift={(4.5,0)},line width=2pt]  plot[domain=0:3.141592653589793,variable=\t]({1*0.5*cos(\t r)+0*0.5*sin(\t r)},{0*0.5*cos(\t r)+1*0.5*sin(\t r)});
\draw [shift={(2.5,0)},line width=2pt]  plot[domain=0:3.141592653589793,variable=\t]({1*0.5*cos(\t r)+0*0.5*sin(\t r)},{0*0.5*cos(\t r)+1*0.5*sin(\t r)});
\draw [shift={(3.5,0)},line width=2pt]  plot[domain=0:3.141592653589793,variable=\t]({1*2.5*cos(\t r)+0*2.5*sin(\t r)},{0*2.5*cos(\t r)+1*2.5*sin(\t r)});
\begin{scriptsize}
\draw [fill=ffffff] (1,0) circle (2.5pt);
\draw [fill=ffffff] (6,0) circle (2.5pt);
\draw [fill=ffffff] (2,0) circle (2.5pt);
\draw [fill=ffffff] (5,0) circle (2.5pt);
\draw [fill=ffffff] (3,0) circle (2.5pt);
\draw [fill=ffffff] (4,0) circle (2.5pt);
\end{scriptsize}
\end{tikzpicture}} & $\theta(\theta+K(n-1))^2$
\\
\hline 
$|abcbca\rangle$ &\raisebox{-.3\height}{\begin{tikzpicture}[line cap=round,line join=round,>=triangle 45,x=1cm,y=1cm, scale = 0.5]
\draw [shift={(3.5,0)},line width=2pt]  plot[domain=0:3.141592653589793,variable=\t]({1*2.5*cos(\t r)+0*2.5*sin(\t r)},{0*2.5*cos(\t r)+1*2.5*sin(\t r)});
\draw [shift={(3,0)},line width=2pt]  plot[domain=0:3.141592653589793,variable=\t]({1*1*cos(\t r)+0*1*sin(\t r)},{0*1*cos(\t r)+1*1*sin(\t r)});
\draw [shift={(4,0)},line width=2pt]  plot[domain=0:3.141592653589793,variable=\t]({1*1*cos(\t r)+0*1*sin(\t r)},{0*1*cos(\t r)+1*1*sin(\t r)});
\begin{scriptsize}
\draw [fill=ffffff] (1,0) circle (2.5pt);
\draw [fill=ffffff] (6,0) circle (2.5pt);
\draw [fill=ffffff] (2,0) circle (2.5pt);
\draw [fill=ffffff] (5,0) circle (2.5pt);
\draw [fill=ffffff] (3,0) circle (2.5pt);
\draw [fill=ffffff] (4,0) circle (2.5pt);
\end{scriptsize}
\end{tikzpicture}} & $\theta(\theta^2 + 3K(n-1)\theta + K^2 (n-1)(2n-1)\theta)$
\\
\hline
$|abccba\rangle$ &  \raisebox{-.3\height}{\begin{tikzpicture}[line cap=round,line join=round,>=triangle 45,x=1cm,y=1cm, scale = 0.5]
\draw [shift={(3.5,0)},line width=2pt]  plot[domain=0:3.141592653589793,variable=\t]({1*2.5*cos(\t r)+0*2.5*sin(\t r)},{0*2.5*cos(\t r)+1*2.5*sin(\t r)});
\draw [shift={(3.5,0)},line width=2pt]  plot[domain=0:3.141592653589793,variable=\t]({1*1.5*cos(\t r)+0*1.5*sin(\t r)},{0*1.5*cos(\t r)+1*1.5*sin(\t r)});
\draw [shift={(3.5,0)},line width=2pt]  plot[domain=0:3.141592653589793,variable=\t]({1*0.5*cos(\t r)+0*0.5*sin(\t r)},{0*0.5*cos(\t r)+1*0.5*sin(\t r)});
\begin{scriptsize}
\draw [fill=ffffff] (1,0) circle (2.5pt);
\draw [fill=ffffff] (6,0) circle (2.5pt);
\draw [fill=ffffff] (2,0) circle (2.5pt);
\draw [fill=ffffff] (5,0) circle (2.5pt);
\draw [fill=ffffff] (3,0) circle (2.5pt);
\draw [fill=ffffff] (4,0) circle (2.5pt);
\end{scriptsize}
\end{tikzpicture}} & $\theta(\theta^2 + 3K(n-1)\theta + 2K^2 n(n-1))$ \\
\hline
\end{longtable}
\end{center}

\renewcommand\arraystretch{1}

\subsection{Some properties of the polynomials}

\begin{prop}
Let $x, y, z$ be mutually non-identical letters. Then 
$$|a_1 \cdots x \cdots y \cdots R(x,y)z \cdots z \cdots a_{2r}\rangle + |a_1 \cdots x \cdots y \cdots z \cdots R(x,y)z \cdots a_{2r}\rangle = 0$$
The locations of $x,y$ outside of $R(x,y)z$ are not important. They can be interchanged, individually placed before or after $R(x,y)z$. 
\end{prop}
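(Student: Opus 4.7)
The plan is to unfold both brackets using the explicit formula for the curvature tensor on a space form and observe that the resulting four ordinary brackets cancel in pairs. Recall from Section~\ref{R-tensor} that in the chosen orthonormal frame $\{X_1,\ldots,X_n\}$, the curvature acts as
$$R(X_x,X_y)X_z \;=\; -K\bigl(\delta_{xz}X_y - \delta_{yz}X_x\bigr).$$
Since each bracket is by definition a sum over all index values attached to the letters of $(\nabla^{2r-2}f)$ evaluated on the associated tensor, substituting this formula into the curvature slot replaces the curvature position by a linear combination of $\pm KX_x$ and $\mp KX_y$, while simultaneously forcing the stand-alone $z$-position (which carries the contracted copy of $z$) to be either $X_x$ or $X_y$.

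Concretely, call $i$ the position of $R(x,y)z$ and $j$ the position of the stand-alone $z$ in the first bracket, so $i<j$; in the second bracket these positions are swapped. Performing the substitution for the first bracket, the $\delta_{xz}$-term produces $-K$ times the word whose $i$-th position carries $y$ and whose $j$-th position carries $x$ (matching the free $x$ letter); the $\delta_{yz}$-term produces $+K$ times the word with $x$ at position $i$ and $y$ at position $j$. Symbolically,
\[
\bigl|\cdots x\cdots y\cdots R(x,y)z_i \cdots z_j\cdots\bigr\rangle
= -K\,\bigl|\cdots x\cdots y\cdots y_i\cdots x_j\cdots\bigr\rangle
+ K\,\bigl|\cdots x\cdots y\cdots x_i\cdots y_j\cdots\bigr\rangle.
\]
For the second bracket, the curvature sits at $j$ instead of $i$, so the same substitution yields
\[
\bigl|\cdots x\cdots y\cdots z_i\cdots R(x,y)z_j\cdots\bigr\rangle
= -K\,\bigl|\cdots x\cdots y\cdots x_i\cdots y_j\cdots\bigr\rangle
+ K\,\bigl|\cdots x\cdots y\cdots y_i\cdots x_j\cdots\bigr\rangle.
\]

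Adding the two expressions, the same two words appear with opposite coefficients, and the sum vanishes identically. Notice that the argument never uses the location of the free $x$ and $y$ letters — only the two positions $i,j$ participating in the curvature contraction matter — which automatically confirms the parenthetical remark that the free $x,y$ can be placed anywhere before or after $R(x,y)z$ without affecting the identity.

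I do not anticipate any serious obstacle here: the whole identity is a formal consequence of the two-term expansion of $R(X_x,X_y)X_z$ and the symmetric role that the indices $x$ and $y$ play once the Kronecker deltas are resolved. The only thing to watch is the bookkeeping of which position receives which letter after contraction, but the antisymmetry of $R$ in its first two arguments guarantees the pairwise sign cancellation between the two bracket expressions.
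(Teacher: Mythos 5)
Your proposal is correct and follows essentially the same route as the paper: both resolve the sum over the contracted letter $z$ via the constant-curvature formula $R(X_x,X_y)X_z=-K(\delta_{xz}X_y-\delta_{yz}X_x)$, so that each bracket collapses to the two words with $x,y$ (resp.\ $y,x$) in positions $i,j$, and these appear with opposite signs in the two brackets. The bookkeeping of coefficients matches the paper's computation exactly, including the observation that the locations of the free $x$ and $y$ play no role.
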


\begin{proof}
The first term is equal to 
\begin{align*}
    & \state{a_1 \cdots x \cdots y \cdots R(x, y)x \cdots x \cdots a_{2k}} + \state{a_1 \cdots x \cdots y \cdots R(x, y)y \cdots y \cdots a_{2k}} \\
    = & -K \state{a_1 \cdots x \cdots y \cdots y \cdots x \cdots a_{2k}} + K\state{a_1 \cdots x \cdots y \cdots x \cdots y \cdots a_{2k}},
\end{align*}
while the second term is equal to 
\begin{align*}
    & \state{a_1 \cdots x \cdots y \cdots x \cdots R(x, y)x \cdots a_{2k}} + \state{a_1 \cdots x \cdots y \cdots y \cdots R(x, y)y \cdots a_{2k}} \\
    = & -K \state{a_1 \cdots x \cdots y \cdots x \cdots y \cdots a_{2k}} + K\state{a_1 \cdots x \cdots y \cdots y \cdots x \cdots a_{2k}}
\end{align*}
Their sum is zero. 
\end{proof}

\begin{rema}
This result simplifies the computation of the commutator $\state{a_1 \cdots a_{i-1} [a_i a_{i+1}] a_{i+2} \cdots a_{2k}}$. It suffices to consider those $a_j$'s that appears only once in $\{a_{i+2}, ..., a_{2k}\}$. 
\end{rema}

\begin{prop} \label{Brauer-Involn}
For every word $``a_1 \cdots a_{2k}",$ 
$$|a_1\cdots a_{2k}\rangle = |a_{2k}\cdots a_1\rangle $$
In other words, inverting the order of words does not change the polynomial. 
\end{prop}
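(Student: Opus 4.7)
I would proceed by induction on $k$. The base case $k=1$ is immediate since $|aa\rangle = \theta$ is self-reverse. Assume the statement holds for all words of length less than $2k$.

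The first step is to establish an auxiliary \emph{first-swap freeness}:
\begin{align*}
|a_1 a_2 a_3 \cdots a_{2k}\rangle = |a_2 a_1 a_3 \cdots a_{2k}\rangle.
\end{align*}
Apply the recursion \eqref{recur-2} with $i=1$, so that the difference is a sum over $j=3,\ldots,2k$ of corrections involving $R(X_{a_1},X_{a_2})X_{a_j}$. By the discussion in Section \ref{R-tensor}, this curvature term vanishes unless $a_j \in \{a_1, a_2\}$. Hence only two summands survive, corresponding to the unique other occurrences of $a_1$ and $a_2$ among positions $3,\ldots,2k$. Substituting $R(X_{a_1},X_{a_2})X_{a_1} = -K X_{a_2}$ and $R(X_{a_1},X_{a_2})X_{a_2} = K X_{a_1}$ and comparing the resulting Brauer diagrams of length $2k-2$, these two contributions produce the same diagram (with a single arc between the leftover $a_1$- and $a_2$-positions) with opposite signs, and so cancel. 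Together with the end-swap freeness (first assertion of Theorem \ref{Comm-Cov}), this shows that $|w\rangle$ depends on the first two and last two letters only as unordered pairs.

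Next, to obtain the reversal, let $p = m(1) \in \{2,3,\ldots,2k\}$ denote the position matched with position $1$ in the Brauer diagram of $w$. If $p=2$, then $w = a_1 a_1 \cdot w'$ for some word $w'$ of length $2k-2$, and by the associativity \eqref{Brauer-tensor} we have $|w\rangle = \theta \cdot |w'\rangle$. The reverse is $w^{\mathrm{rev}} = (w')^{\mathrm{rev}} \cdot a_1 a_1$, for which the same associativity together with the inductive hypothesis applied to $w'$ yields $|w^{\mathrm{rev}}\rangle = |(w')^{\mathrm{rev}}\rangle \cdot \theta = |w'\rangle \cdot \theta = |w\rangle$. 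More generally, if $w$ factorizes as $w_1 w_2$ with $w_1, w_2$ on disjoint letter sets, associativity and the commutativity of polynomial multiplication immediately give the conclusion from the inductive hypothesis.

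For the remaining (non-factorizable) case with $p > 2$, I would perform a secondary induction on $p$, using the recursion \eqref{recur-2} to slide the arc $(1,p)$ leftward. The curvature corrections thereby produced are polynomials on Brauer diagrams of length $2k-2$, to which the primary inductive hypothesis applies. A parallel reduction is then carried out on $w^{\mathrm{rev}}$, and first-swap freeness is used at the boundary of the reduction to identify the two sides. The principal obstacle will be the combinatorial bookkeeping: the recursion \eqref{recur-2} propagates curvature corrections only to positions \emph{right} of the swap site, producing an asymmetry between the reductions of $w$ and of $w^{\mathrm{rev}}$. Showing that the two asymmetric totals agree will require a careful interplay of first-swap freeness, the end-swap identity, and the pairing-cancellation in the preceding proposition; this is where the hardest work of the proof lies.
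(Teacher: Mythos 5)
Your overall architecture --- a primary induction on the length combined with a secondary induction that slides one arc endpoint using the recursion (\ref{recur-2}), with the pairing-cancellation proposition controlling the curvature corrections --- is essentially the strategy of the paper's proof (the paper tracks the partner of the \emph{last} letter and slides it rightward toward position $2k-1$, whereas you track the partner of the first letter and slide leftward; this is an immaterial mirror image). However, there are two genuine problems. First, your justification of ``first-swap freeness'' is incorrect as stated: for a letter $z\notin\{a_1,a_2\}$ occurring at a position $j\geq 3$, the summand $|\ldots,R(a_1,a_2)a_j,\ldots\rangle$ does \emph{not} vanish. The letters are summation indices running from $1$ to $n$, so the sum over the values of $z$ picks up the diagonal terms where $z$ coincides with the value of $a_1$ or of $a_2$; by the recursion this summand equals $K\state{I_{a_ja_2}\cdots a_1\cdots}-K\state{I_{a_ja_1}\cdots a_2\cdots}$, which is generally nonzero. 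What is true is that the two summands coming from the \emph{two} occurrences of $z$ cancel each other --- this is exactly the content of the proposition immediately preceding Proposition \ref{Brauer-Involn} --- and that, not any pointwise vanishing of the curvature, is what reduces the commutator to the two terms indexed by the partners of $a_1$ and $a_2$ (which do then cancel, so the conclusion you want is correct even though your derivation of it is not).

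Second, and more seriously, you have not proved the statement in the main non-factorizable case: you describe the plan of sliding the arc and then state that showing ``the two asymmetric totals agree'' is ``where the hardest work of the proof lies.'' That matching is the entire substance of the proposition. In the paper's argument, after a single swap the main term is disposed of by the secondary induction hypothesis, and each curvature correction is computed explicitly --- including the special treatment of the letter $a_{i+1}$ according to whether its partner lies among $a_1,\ldots,a_{i-1}$ or among $a_{i+2},\ldots,a_{2k-1}$, which determines whether the $K(n-1)$-term in (\ref{formula-1}) cancels or survives. Each correction is a word of length $2k-2$, the primary induction hypothesis is applied to reverse it, and it is then identified with the corresponding correction arising from the analogous one-swap reduction of $|a_{2k}\cdots a_1\rangle$. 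Until you carry out this termwise identification --- in particular, verifying that the built-in asymmetry of (\ref{recur-2}) (corrections propagate only to positions to the right of the swap site) is compensated when the same recursion is applied to the reversed word --- the proof is incomplete.
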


\begin{proof}
We argue by induction. Assume the equality holds for all smaller $k$. Choose $i\in \{1, ..., 2k-1\}$ such that $a_{i} = a_{2k}$. If $i = 2k-1$ then the equation holds trivially. Now assume the equality holds for all larger $i$'s. Let $Z=a_i = a_{2k}$. Then the left-hand-side is
\begin{align*}
    & |a_1\cdots a_{i-1} Z a_{i+1}\cdots a_{2k-1}Z\rangle\\
    =& |a_1\cdots a_{i-1} a_{i+1} Z\cdots a_{2k-1} Z\rangle+ |a_1\cdots a_{i-1} [Z a_{i+1}]\cdots a_{2k-1}Z\rangle
\end{align*}
The first term, by induction hypothesis, is 
$$|Za_{2k-1} \cdots Z a_{i+1}a_{i-1} \cdots a_1\rangle,$$
while the second term is
\begin{align}
    & - \sum_{j=i+2}^{2k-1}  |a_1\cdots a_{i-1} a_{i+2} \cdots R(Z, a_{i+1})a_j \cdots a_{2k-1}Z\rangle - |a_1\cdots a_{i-1} a_{i+2} \cdots a_{2k-1} R(Z, a_{i+1}) Z\rangle\nonumber\\
    =& - \sum_{j=i+2}^{2k-1}  |a_1\cdots a_{i-1} a_{i+2} \cdots R(Z, a_{i+1})a_j \cdots a_{2k-1}Z\rangle + K(n-1) |a_1\cdots a_{i-1} a_{i+2} \cdots a_{2k-1}a_{i+1}\rangle \label{formula-1}
\end{align}
From the previous lemma, it suffices to consider the letters $a_j$ with its copy falling among $a_1, ..., a_{i-1}$. Let $j'$ be the position of the copy of $a_j$. Then for each such $j$ with $a_j \neq a_{i+1}$, the summand is
\begin{align}
    & - |a_1 \cdots a_{j'}\cdots a_{i-1}  a_{i+2} \cdots R(Z, a_{i+1})a_j \cdots a_{2k-1}Z\rangle \nonumber\\
    =& K|a_1 \cdots Z\cdots a_{i-1}  a_{i+2} \cdots a_{i+1} \cdots a_{2k-1}Z\rangle - K |a_1 \cdots a_{i+1} \cdots a_{i-1} Z a_{i+2} \cdots Z\cdots a_{2k-1}Z\rangle \label{LHS-summand}
\end{align}
If for some such $j$ we have $a_j = a_{i+1}$, i.e., the copy of the letter $a_{i+1}$ falls in $a_{i+2}, ..., a_{2k-1}$, then the summand is 
$$-K(n-1)|a_1 \cdots a_{i-1}a_{i+2} \cdots Z \cdots a_{2k-1} Z\rangle$$
Replacing $Z$ by $a_{i+1}$, we see that this term cancels out with the second term in (\ref{formula-1}). If instead, the copy of the letter $a_{i+1}$ does not fall in $a_{i+2}, ..., a_{2k-1}$, then the second term in (\ref{formula-1}) is kept. 

We now look at the right-hand-side: 
\begin{align*}
    & |Za_{2k-1} \cdots a_{i+1} Z a_{i-1} \cdots a_1\rangle \\
    =& |Za_{2k-1} \cdots Z a_{i+1} a_{i-1} \cdots a_1 \rangle + |Z a_{2k-1} \cdots [a_{i+1} Z] a_{i-1} \cdots a_1\rangle 
\end{align*}
The second term can be further expanded as 
$$-\sum_{j'=1}^{i-1}  |Z a_{2k-1}\cdots a_{i+2} a_{i-1} \cdots R(a_{i+1}, Z)a_{j'} \cdots a_1\rangle$$
We similarly consider only those $j'$ such that $a_{j'}$ appears among $a_{2k-1}, ..., a_{i+2}$. Denote the position of $a_{j'}$ by $j$. We see for each such $j'$ with $a_{j'} \neq a_{i+1}$, each summand is
\begin{align*}
    & -|Za_{2k-1}\cdots a_{i+2}  a_{i-1} \cdots R(a_{i+1}, Z)a_{j'} \cdots a_1\rangle\\
    = & - |Za_{2k-1}\cdots a_j \cdots  a_{i+2} a_{i-1} \cdots R(a_{i+1}, Z)a_{j'} \cdots a_1\rangle\\
    = & K |Za_{2k-1}\cdots a_{i+1} \cdots  a_{i+2} a_{i-1} \cdots Z \cdots a_1\rangle - K |Za_{2k-1}\cdots Z \cdots  a_{i+2} a_{i-1} \cdots a_{i+1} \cdots a_1\rangle
\end{align*}
Induction hypothesis shows that each such guy is identical to (\ref{LHS-summand}). Now if $a_{i+1}$ appears in $a_{i+2}, ..., a_{2k-1}$,  then it does not appear in the sum of the right-hand-side. If $a_{i+1}$ does not appears in $a_{i+2}, ..., a_{2k-1}$, then there is one extra summand that will cancel out with the second term in (\ref{formula-1}). So we conclude the proof. 
\end{proof}

\begin{rema}
If we interpret the parallel tensors in terms of $(0, 2k)$-Brauer diagrams as in \cite{LZ-Category}, then (\ref{Brauer-tensor}) can be understood as
$$|D_1 \otimes D_2 \rangle = |D_1\rangle \cdot |D_2\rangle$$
for two diagrams $D_1, D_2$ of type $(0, 2k)$. 
The result of Proposition \ref{Brauer-Involn} can be understood as 
$$|D^\#\rangle = |D\rangle$$
where the operator $\#$ is an involution on Brauer diagrams that sends a diagram to its reflection about a vertical line. 
\end{rema}

Now we view $|a_1 \cdots a_{2k}\rangle$ as a polynomial in $\theta$ and $n$ and study the highest degree homogeneous component. 

\begin{prop}\label{dominant}
Let $a_1, ..., a_k$ be distinct letters. Then for every $\sigma\in \text{Sym}_k$, the degree-$k$ component of 
\begin{align}
    |a_1 \cdots a_k a_{\sigma(1)}\cdots a_{\sigma(k)}\rangle \label{actualpolynomial}
\end{align}
is 
\begin{align}
    \theta(\theta+Kn))\cdots(\theta+(k-1)Kn), \label{dominantpolynomial}
\end{align}
\end{prop}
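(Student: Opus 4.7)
The plan is to prove the statement by induction on $k$, with the trivial base case $|aa\rangle = \theta = Q_1(\theta)$, where I write $Q_k(\theta) := \theta(\theta+Kn)\cdots(\theta+(k-1)Kn)$. The inductive step splits into two parts: (i) establishing that the degree-$k$ component of $|a_1\cdots a_k a_{\sigma(1)}\cdots a_{\sigma(k)}\rangle$ is independent of $\sigma$, and (ii) computing this common component for the specific choice $\sigma=\mathrm{id}$.

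For part (i), I will first verify by a separate induction that $|w\rangle$ has total $(\theta,n)$-degree at most $k$ for every word $w$ of length $2k$; this follows directly from the recursion (\ref{recur-2}), since each step multiplies by $K$ or $K(n-1)$ and shortens the word by two. Then, for an adjacent swap at positions $(i,i+1)$ within the second half ($i\geq k+1$), the commutator formula gives a sum over $j\in\{i+2,\ldots,2k\}$. The summands with $a_j\in\{a_i,a_{i+1}\}$ would contribute at degree $k$ through their $K(n-1)$ coefficient, but in a permutation diagram the other copies of $a_i$ and $a_{i+1}$ lie in the first half at positions $\leq k < i+2$, out of range; the remaining summands carry $\pm K$ and a length-$(2k-2)$ polynomial, hence have degree at most $k-1$. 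So the degree-$k$ component is preserved under every second-half swap. Combining this with Proposition \ref{Brauer-Involn} (which lets me transfer the statement to first-half swaps by reversing the word), and observing that adjacent transpositions generate all of $S_k$, the degree-$k$ component is the same for every permutation diagram.

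For part (ii), I will compute the degree-$k$ component of $W_k := |a_1 a_2 \cdots a_k a_1 a_2 \cdots a_k\rangle$ by performing a sequence of $k-1$ adjacent swaps that drag the copy of $a_1$ initially at position $k+1$ leftward to position $2$. For $m=1,\ldots,k-1$, swap $m$ exchanges positions $k-m+1$ and $k-m+2$; by direct tracking, at that moment position $k-m+1$ holds $a_{k-m+1}$ and position $k-m+2$ holds $a_1$. After all the swaps, the word becomes $|a_1 a_1 a_2 a_3\cdots a_k a_2 a_3 \cdots a_k\rangle$, which by (\ref{Brauer-tensor}) equals $\theta\cdot W_{k-1}$ and so contributes degree-$k$ part $\theta\cdot Q_{k-1}(\theta)$ by the inductive hypothesis. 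For swap $m$, the commutator sums over $j\in\{k-m+3,\ldots,2k\}$; the only value contributing at degree $k$ is $j=2k-m+1$, which is the original (untouched by earlier swaps) position of the second copy of $a_{k-m+1}$, since the other copy of $a_1$ has been removed by the swap itself. A careful check shows that the resulting length-$(2k-2)$ word is again a permutation diagram, now on the letters $\{a_1,\ldots,a_k\}\setminus\{a_{k-m+1}\}$, so by the inductive hypothesis it contributes $Kn\cdot Q_{k-1}(\theta)$ at degree $k$.

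Summing the $k-1$ commutator contributions with the final word yields a degree-$k$ component of $\bigl(\theta + (k-1)Kn\bigr)Q_{k-1}(\theta) = Q_k(\theta)$, completing the induction. The main obstacle will be the bookkeeping in part (ii): after $m-1$ prior swaps one must pinpoint where each letter sits in order to identify the unique $j$ contributing at degree $k$ and to confirm that the post-commutator length-$(2k-2)$ word really is a permutation diagram, so that the inductive hypothesis applies. A secondary technical point is that part (i) uses the preliminary degree bound $\deg|w|\leq k$ for \emph{all} $(k,k)$-Brauer diagrams, not just permutation diagrams, which must be established first by an independent induction via (\ref{recur-2}).
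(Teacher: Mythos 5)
Your proof is correct and follows essentially the same strategy as the paper's: induction on $k$, a chain of $k-1$ adjacent swaps each contributing $Kn$ times the degree-$(k-1)$ polynomial plus lower-degree corrections, terminating in a factorizable word that contributes the extra factor $\theta$, together with the observation that adjacent swaps in the right half leave the top-degree component unchanged because the commutator terms there carry only $\pm K$. The differences are cosmetic: the paper anchors at $|a_1\cdots a_k a_k\cdots a_1\rangle$ and drags the pair $a_ka_k$ rightward while you anchor at $|a_1\cdots a_ka_1\cdots a_k\rangle$ and drag a single copy of $a_1$ leftward, and your appeal to Proposition \ref{Brauer-Involn} is unnecessary since second-half swaps already realize all of $\text{Sym}_k$.
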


\begin{proof}
We prove by induction. The case $k=1$ holds trivially. Assume the statement holds for every smaller $k$. Consider first the special case of $$|a_1 \cdots a_k a_k \cdots a_1\rangle.$$
We attempt to move the central $a_ka_k$ to the right using the commutator formula (\ref{recur-2}): 
\begin{align*}
    & |a_1 \cdots a_{k-1}a_k a_k a_{k-1}\cdots a_1\rangle\\ 
    & \quad = |a_1 \cdots a_{k-1} a_k a_{k-1}a_k \cdots a_1\rangle + |a_1 \cdots a_{k-1} a_k [a_k a_{k-1}] \cdots a_1\rangle \\
    &  \quad = |a_1 \cdots  a_{k-1}a_{k-1}a_ka_k \cdots a_1\rangle +|a_1 \cdots a_{k-1} [a_k a_{k-1}] a_k \cdots a_1\rangle + |a_1 \cdots a_{k-1} a_k [a_k a_{k-1}] \cdots a_1\rangle\\
    & \quad = |a_1 \cdots  a_{k-1}a_{k-1}a_ka_k \cdots a_1\rangle -|a_1 \cdots a_{k-1} ,R(a_k, a_{k-1}) a_k, \cdots a_1\rangle + 2|a_1 \cdots a_{k-1} a_k [a_k a_{k-1}] \cdots a_1\rangle\\
    & \quad = |a_1 \cdots  a_{k-1}a_{k-1}a_ka_k \cdots a_1\rangle +K(n-1) |a_1 \cdots a_{k-1} a_{k-1} \cdots a_1\rangle + 2|a_1 \cdots a_{k-1} a_k [a_k a_{k-1}] \cdots a_1\rangle
\end{align*}
From the induction hypothesis, the degree-$k$ component of the second term is 
\begin{align}
    Kn \theta (\theta+Kn)\cdots (\theta+(k-2)Kn). \label{contrib}
\end{align} 
We show now that the degree-$k$ component of the third term is zero:
\begin{align*}
    & |a_1 \cdots a_{k-1} a_k [a_k a_{k-1}] \cdots a_1\rangle\\ & \quad = \sum_{j=1}^{k-2}|a_1 \cdots a_{j-1}, a_j, a_{j+1} \cdots a_{k-2},  a_{k-1} a_k, a_{k-2}\cdots a_{j+1},  R(a_{k-1}, a_k)a_{j}, a_{j-1} \cdots a_1\rangle \\
    & \quad = -\sum_{j=1}^{k-2}|a_1 \cdots a_{j-1}, a_j, a_{j+1} \cdots a_{k-2}, a_{k-1} a_k, a_{k-2}\cdots a_{j+1},  R(a_{k-1}, a_k)a_{j}, a_{j-1} \cdots a_1\rangle \\
    & \quad = K\sum_{j=1}^{k-2}|a_1 \cdots a_{j-1}, a_{k-1}, a_{j+1} \cdots a_{k-2}, a_{k-1} a_k, a_{k-2}\cdots a_{j+1},  a_k, a_{j-1} \cdots a_1\rangle \\
    & \qquad - K\sum_{j=1}^{k-2}|a_1 \cdots a_{j-1}, a_{k}, a_{j+1} \cdots a_{k-2}, a_{k-1} a_k, a_{k-2}\cdots a_{j+1},  a_{k-1}, a_{j-1} \cdots a_1\rangle
\end{align*}
The polynomials involved on the right-hand-side are at most of degree $k-1$ and thus do not contain any degree-$k$ component. 

Now we handle the first term by further moving $a_ka_k$ one step right. By a similar computation we see that 
\begin{align*}
    & |a_1 \cdots a_{k-2} a_{k-1}a_{k-1}a_ka_k a_{k-2} \cdots a_1\rangle \\
    & \quad = |a_1 \cdots a_{k-2} a_{k-1}a_{k-1} a_{k-2}a_ka_k \cdots a_1\rangle + K(n-1)|a_1 \cdots a_{k-2} a_{k-1}a_{k-1} a_{k-2} \cdots a_1\rangle \\
    & \qquad + 2|a_1 \cdots a_{k-2} a_{k-1}a_{k-1}a_k[a_k a_{k-2}] \cdots a_1\rangle 
\end{align*}
Similarly, the degree-$k$ contribution of the second term is (\ref{contrib}), and that of the third term is zero. Repeating the process to the first term, until the first term becomes 
$$|a_1 \cdots a_{k-1}a_{k-1}\cdots a_1 a_k a_k\rangle.$$
The degree-$k$ component is then 
$$\theta \cdot \theta (\theta+Kn)\cdots (\theta+(k-2)Kn)$$
Since the move is performed $k-1$ times, we conclude that the degree-$k$ component of $|a_1 \cdots a_k a_k\cdots a_1\rangle$ is
\begin{align*}
    & \theta \cdot \theta(\theta+Kn)\cdots (\theta+(k-2)Kn) + (k-1)Kn \cdot  \theta(\theta+Kn)\cdots (\theta+(k-2)Kn)\\
    & \qquad = \theta(\theta+Kn)\cdots (\theta+(k-2)Kn)(\theta+(k-1)Kn)
\end{align*}

Now we consider the general case. It suffices to notice that for every $\sigma\in \text{Sym}_k$ and for every $i=1, ..., k$, 
\begin{align*}
    & |a_1 \cdots a_k a_{\sigma(1)} \cdots [a_{\sigma(i)}a_{\sigma(i+1)}] \cdots a_{\sigma(k)}\rangle \\
    & \quad = -\sum_{j=i+2}^k |a_1 \cdots a_{\sigma(j)} \cdots a_k a_{\sigma(1)} \cdots a_{\sigma(i-1)}  a_{\sigma(i+1)} \cdots R(a_{\sigma(i)}a_{\sigma(i+1)}) a_{\sigma(j)}\cdots  a_{\sigma(k)}\rangle \\
    & \quad = K \sum_{j=i+2}^k |a_1 \cdots a_{\sigma(i)} \cdots a_k a_{\sigma(1)} \cdots a_{\sigma(i-1)}  a_{\sigma(i+2)} \cdots a_{\sigma(i+1)} \cdots  a_{\sigma(k)}\rangle \\
    & \qquad -
    K \sum_{j=i+2}^k |a_1 \cdots a_{\sigma(i+1)} \cdots a_k a_{\sigma(1)} \cdots a_{\sigma(i-1)}  a_{\sigma(i+2)} \cdots a_{\sigma(i)}\cdots  a_{\sigma(k)}\rangle 
\end{align*}
is of degree at most $k-1$. Thus switching the position of adjacent letters on the right-half of the word does not change the degree-$k$ component. This concludes the proof. 
\end{proof}

Similar computations yield the following results. 
\begin{cor}
\begin{enumerate}
    \item The degree-$k$ component of 
$$|a_1 \cdots a_i a_{i+1}\cdots a_{k-1}a_{k-1} \cdots a_{i+1}  a_k a_k a_i \cdots a_1\rangle$$
is 
$$\theta(\theta+Kn)\cdots (\theta+(k-2)Kn) (\theta+iKn)$$
\item The degree-$k$ component of 
$$|a_1 \cdots a_i a_{i+1}\cdots a_ja_{k-1}a_{k-1}a_{j+1}\cdots a_{k-2}a_{k-2} \cdots a_{j+1}a_j \cdots a_{i+1}  a_k a_k a_i \cdots a_1\rangle$$
is 
$$\theta(\theta+Kn)\cdots (\theta+(k-3)Kn) (\theta+iKn)(\theta+jKn)$$
\item For every $\sigma \in \text{Sym}\{1, ..., k-2\}$, the degree-$k$ component of 
$$|a_1 \cdots a_{k-1} \cdots a_j a_{k-1}a_{j+1}\cdots a_{k-2}a_{\sigma^{-1}(k-2)} \cdots a_{\sigma^{-1}(i+1)}  a_k  a_{\sigma^{-1}(i)} \cdots a_k \cdots  a_{\sigma^{-1}(1)}\rangle$$
is 
$$\theta(\theta+Kn)\cdots (\theta+(k-3)Kn) (\theta+iKn)(\theta+jKn)$$
\end{enumerate}
\end{cor}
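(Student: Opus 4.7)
The plan is to prove the three statements in parallel by the same \emph{move-and-track} strategy used in Proposition \ref{dominant}, iterated and combined with Proposition \ref{Brauer-Involn}. The key bookkeeping fact, extracted from the recursion (\ref{recur-2}) together with its $a_j=a_i$ and $a_j=a_{i+1}$ branches, is the following: an elementary swap of adjacent letters $a,b$ in a length-$2k$ word contributes to the top joint-degree-$k$ component of the polynomial (joint degree meaning degree in $\theta$ and $n$ jointly, with $K$ a scalar) only when the other copy of $a$ or of $b$ lies strictly to the right of the swap; in that case the contribution is $Kn$ times the top joint-degree-$(k-1)$ component of the $(k-1)$-letter word produced by deleting one of the letters and promoting its partner. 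All other commutator branches drop the joint degree and can be discarded.

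For part (1), I will move the central block $a_k a_k$ rightward past $a_i,a_{i-1},\dots,a_1$ one position at a time. When moving past a specific $a_l$, the first elementary swap touches only letters whose other copies lie to its left, hence contributes nothing at degree $k$; the second elementary swap, however, sees an $a_k$ immediately to its right, contributing $Kn$ times the polynomial of a modified $(k-1)$-letter word. After using Proposition \ref{Brauer-Involn} to reverse, that modified word matches the palindrome $|a_1\cdots a_{k-1}a_{k-1}\cdots a_1\rangle$, whose top joint-degree equals $\theta(\theta+Kn)\cdots(\theta+(k-2)Kn)$ by Proposition \ref{dominant}. Summing these $i$ identical contributions to the top-degree component of the fully-moved word $|a_1\cdots a_{k-1}a_{k-1}\cdots a_1\rangle\cdot\theta$ yields
\[
\theta^2(\theta+Kn)\cdots(\theta+(k-2)Kn)+iKn\cdot\theta(\theta+Kn)\cdots(\theta+(k-2)Kn),
\]
which factors as $\theta(\theta+Kn)\cdots(\theta+(k-2)Kn)(\theta+iKn)$, as claimed.

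For part (2), the same procedure applies. Once $a_k a_k$ is moved to the far right, the polynomial factorizes as $|W\rangle\cdot\theta$, where $W$ is a $(k-1)$-letter word with an interior $a_{k-1}a_{k-1}$ pair. Reversing $W$ via Proposition \ref{Brauer-Involn} identifies it with the word of part (1) applied to $k-1$ letters with parameter $j$; its top joint-degree-$(k-1)$ component is $\theta(\theta+Kn)\cdots(\theta+(k-3)Kn)(\theta+jKn)$ by induction. Each of the $i$ commutator corrections picked up during the move likewise produces (after reversal) the same $(k-1)$-letter polynomial, and summing gives $\theta(\theta+Kn)\cdots(\theta+(k-3)Kn)(\theta+jKn)(\theta+iKn)$. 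For part (3), I treat the separated $a_k$'s by moving each copy rightward individually; commutators either preserve the top joint degree (by matching a companion $a_k$) or are lower order and discarded. The inner $(k-1)$-letter word thus obtained coincides with the part (2) word for $k-1$ letters after a permutation of its right half, and such right-half permutations preserve the top joint degree by the last paragraph of Proposition \ref{dominant}; applying part (2) inductively delivers the stated formula.

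The main obstacle is the combinatorial bookkeeping: at each elementary swap one must identify precisely which commutator branches from (\ref{recur-2}) survive at top joint degree $k$ and verify that the surviving modified words, after reversal when necessary, are instances of Proposition \ref{dominant} or of Parts (1)--(2) at the lower rank $k-1$. This is most delicate for part (3), where the separation of the two $a_k$'s and the freedom in $\sigma$ permit many configurations, all of which must be shown to reduce via right-half-transposition invariance to the same canonical $(k-1)$-letter polynomial.
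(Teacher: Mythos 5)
Your proposal is correct and follows exactly the route the paper intends: the paper gives no explicit proof of this corollary beyond the remark that ``similar computations'' to Proposition \ref{dominant} apply, and your move-and-track argument (shifting the $a_ka_k$ block rightward, keeping only the $K(n-1)$-branches of the recursion (\ref{recur-2}) whose partner lies to the right, and invoking Proposition \ref{Brauer-Involn} to identify the deleted words with lower-rank instances) is precisely that computation spelled out. The only points to watch are the sign of the surviving branch (it is $+K(n-1)$ exactly because the partner of the \emph{first} entry of the commutator, namely the other $a_k$, sits to the right — which holds in every step you use) and the bookkeeping in part (3), where you should move the \emph{right} copy of $a_k$ leftward (harmlessly) rather than rightward before sorting the right half; with that adjustment the reduction to part (2) at rank $k-1$ goes through.
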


\begin{rema}
Part (3) gives the highest degree component of the polynomial for every graph with an arc on the left half and on the right half. The component is determined by the position of the vertex of the arc that is closer to the center, e.g., the filled vertices shown in the following picture. \\

\begin{center}
\begin{tikzpicture}[line cap=round,line join=round,>=triangle 45,x=1cm,y=1cm, scale = 0.5]
\clip(0, -1) rectangle (17, 2.5);
\draw [shift={(5,0)},line width=2pt]  plot[domain=0:3.141592653589793,variable=\t]({1*2*cos(\t r)+0*2*sin(\t r)},{0*2*cos(\t r)+1*2*sin(\t r)});
\draw [shift={(11,0)},line width=2pt]  plot[domain=0:3.141592653589793,variable=\t]({1*1*cos(\t r)+0*1*sin(\t r)},{0*1*cos(\t r)+1*1*sin(\t r)});
\draw [shift={(8.5,0)},line width=2pt]  plot[domain=0:1,variable=\t]({1*7.5*cos(\t r)+0*7.5*sin(\t r)},{0*7.5*cos(\t r)+1*7.5*sin(\t r)});
\draw [shift={(8.5,0)},line width=2pt]  plot[domain=2.141592653589793:3.141592653589793,variable=\t]({1*7.5*cos(\t r)+0*7.5*sin(\t r)},{0*7.5*cos(\t r)+1*7.5*sin(\t r)});
\draw [shift={(8.5,0)},line width=2pt]  plot[domain=0:1,variable=\t]({1*6.5*cos(\t r)+0*6.5*sin(\t r)},{0*6.5*cos(\t r)+1*6.5*sin(\t r)});
\draw [shift={(9,0)},line width=2pt]  plot[domain=0:1,variable=\t]({1*5*cos(\t r)+0*5*sin(\t r)},{0*5*cos(\t r)+1*5*sin(\t r)});
\draw [shift={(9,0)},line width=2pt]  plot[domain=0:1,variable=\t]({1*4*cos(\t r)+0*4*sin(\t r)},{0*4*cos(\t r)+1*4*sin(\t r)});
\draw [shift={(8.5,0)},line width=2pt]  plot[domain=0:1,variable=\t]({1*2.5*cos(\t r)+0*2.5*sin(\t r)},{0*2.5*cos(\t r)+1*2.5*sin(\t r)});
\draw [shift={(3.5,0)},line width=2pt]  plot[domain=0:1,variable=\t]({1*5.5*cos(\t r)+0*5.5*sin(\t r)},{0*5.5*cos(\t r)+1*5.5*sin(\t r)});
\draw [shift={(8.5,0)},line width=2pt]  plot[domain=2.141592653589793:3.141592653589793,variable=\t]({1*6.5*cos(\t r)+0*6.5*sin(\t r)},{0*6.5*cos(\t r)+1*6.5*sin(\t r)});
\draw [shift={(9,0)},line width=2pt]  plot[domain=2.141592653589793:3.141592653589793,variable=\t]({1*5*cos(\t r)+0*5*sin(\t r)},{0*5*cos(\t r)+1*5*sin(\t r)});
\draw [shift={(9,0)},line width=2pt]  plot[domain=2.141592653589793:3.141592653589793,variable=\t]({1*4*cos(\t r)+0*4*sin(\t r)},{0*4*cos(\t r)+1*4*sin(\t r)});
\draw [shift={(8.5,0)},line width=2pt]  plot[domain=2.141592653589793:3.141592653589793,variable=\t]({1*2.5*cos(\t r)+0*2.5*sin(\t r)},{0*2.5*cos(\t r)+1*2.5*sin(\t r)});
\draw [shift={(14,0)},line width=2pt]  plot[domain=2.141592653589793:3.141592653589793,variable=\t]({1*6*cos(\t r)+0*6*sin(\t r)},{0*6*cos(\t r)+1*6*sin(\t r)});
\begin{scriptsize}
\draw [fill=ffffff] (1,0) circle (2.5pt);
\draw [fill=ffffff] (6,0) circle (2.5pt);
\draw [fill=ffffff] (2,0) circle (2.5pt);
\draw [fill=ffffff] (5,0) circle (2.5pt);
\draw [fill=ffffff] (3,0) circle (2.5pt);
\draw [fill=ffffff] (4,0) circle (2.5pt);
\draw [fill=black] (7,0) circle (4pt);
\draw [fill=ffffff] (8,0) circle (2.5pt);
\draw [fill=ffffff] (9,0) circle (2.5pt);
\draw [fill=black] (10,0) circle (4pt);
\draw [fill=ffffff] (11,0) circle (2.5pt);
\draw [fill=ffffff] (14,0) circle (2.5pt);
\draw [fill=ffffff] (15,0) circle (2.5pt);
\draw [fill=ffffff] (16,0) circle (2.5pt);
\draw [fill=ffffff] (12,0) circle (2.5pt);
\draw [fill=ffffff] (13,0) circle (2.5pt);
\end{scriptsize}
\end{tikzpicture}
\end{center}
\end{rema}

\section{A conjecture motivated by the study of vertex algebras}

In this section we explain a conjecture regarding the polynomials associated to the parallel tensors. It is a key step in generalizing the results in \cite{Q-MOSVA-2d} to higher dimensional space forms.  

\subsection{Linking operator}
Let $W$ be the free abelian group generated by all the words. The map associating a word $w$ to the polynomial $\state{w}$ extends naturally to $W$. For each $1\leq i < j \leq 2k$, we consider the following linking operator on W. 
\begin{align*}
L\binom{i}{j} &(``a_1 \cdots a_{i-1} a_i a_{i+1} \cdots a_{j-1} a_j a_{j+1} \cdots a_{2k}") \\
&= \left\{\begin{array}{ll}
n \cdot ``a_1 \cdots a_{i-1}a_{i+1} \cdots a_{j-1}a_{j+1}\cdots a_{2k}" & \text{ if }a_i = a_j,  \\
``I_{a_ia_j} a_1 \cdots a_{i-1} a_{i+1}\cdots a_{j-1}a_{j+1} \cdots a_{2k}"  & \text{ otherwise.} 
\end{array}\right.  \end{align*}
Here $I_{a_ia_j}$ identifies the letter $a_i$ with $a_j$. 

In natural language, we first remove the letters at the $i$-th and $j$-th position. If the removed letters are the same, then we multiply the remaining word by $n$, ending up in a word of length $2k-2$. If the letters are not the same, then these letters appear somewhere else. Identifying these letter gives a word of length $2k-2$. 

The linking operator gets its name from the interpretation of diagrams, as the operator does precisely the same thing of linking vertex $i$ and vertex $j$.\\

\begin{center}
\begin{tikzpicture}[line cap=round,line join=round,>=triangle 45,x=1cm,y=1cm]
\draw (2.531948051948048,0.09275361243436628) node[anchor=north west] {$i$};
\draw (5.031948051948046,0.06724636901491628) node[anchor=north west] {$j$};
\draw [shift={(4,0)},line width=2pt,dash pattern=on 1pt off 3pt]  plot[domain=3.141592653589793:2*3.141592653589793,variable=\t]({1*1*cos(\t r)+0*1*sin(\t r)},{0*1*cos(\t r)+1*1*sin(\t r)});
\draw [shift={(4,0)},line width=2pt]  plot[domain=0:3.141592653589793,variable=\t]({1*1*cos(\t r)+0*1*sin(\t r)},{0*1*cos(\t r)+1*1*sin(\t r)});
\begin{scriptsize}
\draw [fill=ffffff] (1,0) circle (2.5pt);
\draw [fill=ffffff] (6,0) circle (2.5pt);
\draw [fill=ffffff] (2,0) circle (2.5pt);
\draw [fill=ffffff] (5,0) circle (2.5pt);
\draw [fill=ffffff] (3,0) circle (2.5pt);
\draw [fill=ffffff] (4,0) circle (2.5pt);
\end{scriptsize}
\end{tikzpicture} \qquad \qquad \qquad 
\begin{tikzpicture}[line cap=round,line join=round,>=triangle 45,x=1cm,y=1cm]
\draw [shift={(5.5,0)},line width=2pt]  plot[domain=0:3.141592653589793,variable=\t]({1*0.5*cos(\t r)+0*0.5*sin(\t r)},{0*0.5*cos(\t r)+1*0.5*sin(\t r)});
\draw [shift={(2,0)},line width=2pt]  plot[domain=0:3.141592653589793,variable=\t]({1*1*cos(\t r)+0*1*sin(\t r)},{0*1*cos(\t r)+1*1*sin(\t r)});
\draw (2.531948051948048,0.09275361243436485) node[anchor=north west] {$i$};
\draw (5.031948051948046,0.06724636901491485) node[anchor=north west] {$j$};
\draw [shift={(4,0)},line width=2pt,dash pattern=on 1pt off 3pt]  plot[domain=3.14159265358979:2*3.141592653589793,variable=\t]({1*1*cos(\t r)+0*1*sin(\t r)},{0*1*cos(\t r)+1*1*sin(\t r)});
\begin{scriptsize}
\draw [fill=ffffff] (1,0) circle (2.5pt);
\draw [fill=ffffff] (6,0) circle (2.5pt);
\draw [fill=ffffff] (2,0) circle (2.5pt);
\draw [fill=ffffff] (5,0) circle (2.5pt);
\draw [fill=ffffff] (3,0) circle (2.5pt);
\draw [fill=ffffff] (4,0) circle (2.5pt);
\end{scriptsize}
\end{tikzpicture}
\end{center}
Here each occurrence of a circle brings forth a multiplication by $n$.

For example, consider the word $``aabccb"$. Then 
\begin{align*}
    L\binom 1 2 ``aabccb" &= n \cdot ``\cancel{a}\cancel{a}bccb" = n\cdot ``bccb''\\
    L\binom 1 3 ``aabccb" &= ``I_{ab}\cancel{a}a\cancel{b}ccb" = ``I_{ab}accb" = ``acca". 
\end{align*}
In graphs: 
\begin{center}
\raisebox{-.35\height}{\begin{tikzpicture}[line cap=round,line join=round,>=triangle 45,x=1cm,y=1cm, scale = 0.5]
\draw [shift={(1.5,0)},line width=2pt]  plot[domain=0:3.141592653589793,variable=\t]({1*0.5*cos(\t r)+0*0.5*sin(\t r)},{0*0.5*cos(\t r)+1*0.5*sin(\t r)});
\draw [shift={(4.5,0)},line width=2pt]  plot[domain=0:3.141592653589793,variable=\t]({1*1.5*cos(\t r)+0*1.5*sin(\t r)},{0*1.5*cos(\t r)+1*1.5*sin(\t r)});
\draw [shift={(4.5,0)},line width=2pt]  plot[domain=0:3.141592653589793,variable=\t]({1*0.5*cos(\t r)+0*0.5*sin(\t r)},{0*0.5*cos(\t r)+1*0.5*sin(\t r)});
\draw [shift={(1.5,0)},line width=2pt,dash pattern=on 1pt off 3pt]  plot[domain=3.141592653589793:6.283185307179586,variable=\t]({1*0.5*cos(\t r)+0*0.5*sin(\t r)},{0*0.5*cos(\t r)+1*0.5*sin(\t r)});
\begin{scriptsize}
\draw [fill=ffffff] (1,0) circle (2.5pt);
\draw [fill=ffffff] (2,0) circle (2.5pt);
\draw [fill=ffffff] (5,0) circle (2.5pt);
\draw [fill=ffffff] (3,0) circle (2.5pt);
\draw [fill=ffffff] (4,0) circle (2.5pt);
\draw [fill=ffffff] (6,0) circle (2.5pt);
\end{scriptsize}
\end{tikzpicture}} $= n\cdot$ \raisebox{-.3\height}{\begin{tikzpicture}[line cap=round,line join=round,>=triangle 45,x=1cm,y=1cm, scale = 0.5]
\draw [shift={(2.5,0)},line width=2pt]  plot[domain=0:3.141592653589793,variable=\t]({1*1.5*cos(\t r)+0*1.5*sin(\t r)},{0*1.5*cos(\t r)+1*1.5*sin(\t r)});
\draw [shift={(2.5,0)},line width=2pt]  plot[domain=0:3.141592653589793,variable=\t]({1*0.5*cos(\t r)+0*0.5*sin(\t r)},{0*0.5*cos(\t r)+1*0.5*sin(\t r)});
\begin{scriptsize}
\draw [fill=ffffff] (1,0) circle (2.5pt);
\draw [fill=ffffff] (2,0) circle (2.5pt);
\draw [fill=ffffff] (3,0) circle (2.5pt);
\draw [fill=ffffff] (4,0) circle (2.5pt);
\end{scriptsize}
\end{tikzpicture}}, 

\raisebox{-.5\height}{\begin{tikzpicture}[line cap=round,line join=round,>=triangle 45,x=1cm,y=1cm, scale = 0.5]
\draw [shift={(1.5,0)},line width=2pt]  plot[domain=0:3.141592653589793,variable=\t]({1*0.5*cos(\t r)+0*0.5*sin(\t r)},{0*0.5*cos(\t r)+1*0.5*sin(\t r)});
\draw [shift={(4.5,0)},line width=2pt]  plot[domain=0:3.141592653589793,variable=\t]({1*1.5*cos(\t r)+0*1.5*sin(\t r)},{0*1.5*cos(\t r)+1*1.5*sin(\t r)});
\draw [shift={(4.5,0)},line width=2pt]  plot[domain=0:3.141592653589793,variable=\t]({1*0.5*cos(\t r)+0*0.5*sin(\t r)},{0*0.5*cos(\t r)+1*0.5*sin(\t r)});
\draw [shift={(2,0)},line width=2pt,dash pattern=on 1pt off 3pt]  plot[domain=3.141592653589793:6.283185307179586,variable=\t]({1*1*cos(\t r)+0*1*sin(\t r)},{0*1*cos(\t r)+1*1*sin(\t r)});
\begin{scriptsize}
\draw [fill=ffffff] (1,0) circle (2.5pt);
\draw [fill=ffffff] (2,0) circle (2.5pt);
\draw [fill=ffffff] (5,0) circle (2.5pt);
\draw [fill=ffffff] (3,0) circle (2.5pt);
\draw [fill=ffffff] (4,0) circle (2.5pt);
\draw [fill=ffffff] (6,0) circle (2.5pt);
\end{scriptsize}
\end{tikzpicture}} $=$ \raisebox{-.3\height}{\begin{tikzpicture}[line cap=round,line join=round,>=triangle 45,x=1cm,y=1cm, scale = 0.5]
\draw [shift={(2.5,0)},line width=2pt]  plot[domain=0:3.141592653589793,variable=\t]({1*1.5*cos(\t r)+0*1.5*sin(\t r)},{0*1.5*cos(\t r)+1*1.5*sin(\t r)});
\draw [shift={(2.5,0)},line width=2pt]  plot[domain=0:3.141592653589793,variable=\t]({1*0.5*cos(\t r)+0*0.5*sin(\t r)},{0*0.5*cos(\t r)+1*0.5*sin(\t r)});
\begin{scriptsize}
\draw [fill=ffffff] (1,0) circle (2.5pt);
\draw [fill=ffffff] (2,0) circle (2.5pt);
\draw [fill=ffffff] (3,0) circle (2.5pt);
\draw [fill=ffffff] (4,0) circle (2.5pt);
\end{scriptsize}
\end{tikzpicture}}, 
\end{center}

For each $l=1, ..., k$ and for each sequence $i_1, i_2,..., i_{2l-1}, i_{2l}$ of distinct numbers in $\{1, ..., 2k\}$ satisfying $i_{2j-1}<i_{2j}, j = 1, 2, ..., l$, we similarly define the linking operator 
$$L\left(\begin{array}{cccc} 
i_1 & i_3 & \cdots & i_{2l-1}\\
i_2 & i_4 & \cdots & i_{2l}
\end{array}\right)$$
that sends a word of length $2k$ to the polynomial for the word obtained by linking the $i_1$-th vertex with $i_2$-th vertex, $i_3$-th vertex with $i_4$-th vertex, ..., $i_{2l-1}$-th vertex with $i_{2l}$-th vertex.

\subsection{Statement of the problem}
Consider now the word $$\tau = ``a_1a_2\cdots a_{k-1}a_k a_ka_{k-1}\cdots a_2a_1"$$
corresponding to the following graph
\begin{center}
\begin{tikzpicture}[line cap=round,line join=round,>=triangle 45,x=1cm,y=1cm]
\draw [shift={(5.5,0)},line width=2pt]  plot[domain=0:3.141592653589793,variable=\t]({1*4.5*cos(\t r)+0*4.5*sin(\t r)},{0*4.5*cos(\t r)+1*4.5*sin(\t r)});
\draw [shift={(5.5,0)},line width=2pt]  plot[domain=0:3.141592653589793,variable=\t]({1*3.5*cos(\t r)+0*3.5*sin(\t r)},{0*3.5*cos(\t r)+1*3.5*sin(\t r)});
\draw [shift={(5.5,0)},line width=2pt]  plot[domain=0:3.141592653589793,variable=\t]({1*1.5*cos(\t r)+0*1.5*sin(\t r)},{0*1.5*cos(\t r)+1*1.5*sin(\t r)});
\draw [shift={(5.5,0)},line width=2pt]  plot[domain=0:3.141592653589793,variable=\t]({1*0.5*cos(\t r)+0*0.5*sin(\t r)},{0*0.5*cos(\t r)+1*0.5*sin(\t r)});
\draw (2.486457002284634,0) node[anchor=west] {$\cdots\cdots$};
\draw (7.474594586284257,0) node[anchor=west] {$\cdots\cdots$};
\draw (1,0) node[anchor=north] {$1$};
\draw (2,0) node[anchor=north] {$2$};
\draw (4,0) node[anchor=north] {$k-1$};
\draw (5,0) node[anchor=north] {$k$};
\draw (6,0) node[anchor=north] {$k+1$};
\draw (7,0) node[anchor=north] {$k+2$};
\draw (9,0) node[anchor=north] {$2k-1$};
\draw (10,0) node[anchor=north] {$2k$};
\begin{scriptsize}
\draw [fill=ffffff] (1,0) circle (2.5pt);
\draw [fill=ffffff] (6,0) circle (2.5pt);
\draw [fill=ffffff] (2,0) circle (2.5pt);
\draw [fill=ffffff] (5,0) circle (2.5pt);
\draw [fill=ffffff] (4,0) circle (2.5pt);
\draw [fill=ffffff] (7,0) circle (2.5pt);
\draw [fill=ffffff] (9,0) circle (2.5pt);
\draw [fill=ffffff] (10,0) circle (2.5pt);
\end{scriptsize}
\end{tikzpicture}
\end{center}
To some extent, the polynomial of this word has the ``largest'' highest degree component among those of words of the same length. 

Fix any $l=1,...,[k/2]$ and any sequence $i_1, i_2, \cdots, i_{2l-1}, i_{2l}$ satisfying $i_{2j-1}<i_{2j}, j=1,..., l$. Let 
$$\tau \begin{pmatrix}
i_1 & i_3 & \cdots & i_{2l-1}\\ 
i_2 & i_4 & \cdots & i_{2l}
\end{pmatrix} = L\left(\begin{array}{cccc} 
2r-i_2+1 & 2r-i_4+1 & \cdots & 2r-i_{2l}+1
\\
2r-i_1+1 & 2r-i_3+1 & \cdots & 2r-i_{2l-1}+1
\end{array}\right)\tau$$
Intuitively, $\tau \begin{pmatrix}
i_1 & i_3 & \cdots & i_{2l-1}\\ 
i_2 & i_4 & \cdots & i_{2l}
\end{pmatrix}$ is the word associated to the graph obtained from linking the letters $a_1, ..., a_k$ appearing on the right of the graph, in such a way that $a_{i_1}$ is linked with $a_{i_2}$, ..., $a_{i_{2l-1}}$ is linked with $a_{i_{2l}}$. Observe that no circles will be formed in this process. 

It is also obvious that
\begin{align}
    & L\left(\begin{array}{cccc} 
    i_1 & i_3 & \cdots & i_{2l-1} \\
    i_2 & i_4 & \cdots & i_{2l} 
\end{array}\right)\tau \begin{pmatrix}
i_1 & i_3 & \cdots & i_{2l-1}\\ 
i_2 & i_4 & \cdots & i_{2l}
\end{pmatrix} & = n^k ``a_1\cdots a_{k-2l}a_{k-2l}\cdots a_1", \label{diagonal}
\end{align}
since linking $a_{i_{2j-1}}, a_{i_{2j}}$ both from the left and from the right results in a circle for each $j=1, ..., l$. The following picture shows a simple case when $i_1 = 1, i_2 = 3$:
\begin{center}
\begin{tikzpicture}[line cap=round,line join=round,>=triangle 45,x=1cm,y=1cm, scale = 0.5]
\draw [shift={(6.5,0)},line width=2pt]  plot[domain=0:3.141592653589793,variable=\t]({1*5.5*cos(\t r)+0*5.5*sin(\t r)},{0*5.5*cos(\t r)+1*5.5*sin(\t r)});
\draw [shift={(6.5,0)},line width=2pt]  plot[domain=0:3.141592653589793,variable=\t]({1*4.5*cos(\t r)+0*4.5*sin(\t r)},{0*4.5*cos(\t r)+1*4.5*sin(\t r)});
\draw [shift={(6.5,0)},line width=2pt]  plot[domain=0:3.141592653589793,variable=\t]({1*1.5*cos(\t r)+0*1.5*sin(\t r)},{0*1.5*cos(\t r)+1*1.5*sin(\t r)});
\draw [shift={(6.5,0)},line width=2pt]  plot[domain=0:3.141592653589793,variable=\t]({1*0.5*cos(\t r)+0*0.5*sin(\t r)},{0*0.5*cos(\t r)+1*0.5*sin(\t r)});
\draw [shift={(6.5,0)},line width=2pt]  plot[domain=0:3.141592653589793,variable=\t]({1*3.5*cos(\t r)+0*3.5*sin(\t r)},{0*3.5*cos(\t r)+1*3.5*sin(\t r)});
\draw [shift={(2,0)},line width=2pt, dash pattern=on 1pt off 3pt]  plot[domain=3.141592653589793:6.283185307179586,variable=\t]({1*1*cos(\t r)+0*1*sin(\t r)},{0*1*cos(\t r)+1*1*sin(\t r)});
\draw [shift={(11,0)},line width=2pt, dash pattern=on 1pt off 3pt]  plot[domain=3.141592653589793:6.283185307179586,variable=\t]({1*1*cos(\t r)+0*1*sin(\t r)},{0*1*cos(\t r)+1*1*sin(\t r)});
\draw (3.486457002284634,0) node[anchor=west] {$\cdots$};
\draw (9.474594586284257,0) node[anchor=west] {$\cdots$};

\begin{scriptsize}
\draw [fill=ffffff] (1,0) circle (2.5pt);
\draw [fill=ffffff] (2,0) circle (2.5pt);
\draw [fill=ffffff] (6,0) circle (2.5pt);
\draw [fill=ffffff] (3,0) circle (2.5pt);
\draw [fill=ffffff] (5,0) circle (2.5pt);
\draw [fill=ffffff] (8,0) circle (2.5pt);
\draw [fill=ffffff] (11,0) circle (2.5pt);
\draw [fill=ffffff] (12,0) circle (2.5pt);
\draw [fill=ffffff] (7,0) circle (2.5pt);
\draw [fill=ffffff] (10,0) circle (2.5pt);
\end{scriptsize}
\end{tikzpicture} 
\end{center}

Our main interest falls on the following polynomial
\begin{align}
    |\tau\rangle + \sum_{l=1}^{[k/2]}\sum_{\substack{\text{all possible choices }\\
    \text{of }i_1, i_2, ..., i_{2l-1}, i_{2l}}}x^{i_1i_3\cdots i_{2l-1}}_{i_2i_4\cdots i_{2l}} \state{\tau\left(\begin{array}{cccc}
    i_1 & i_3 & \cdots & i_{2l-1}\\
    i_2 & i_4 & \cdots & i_{2l}
\end{array}\right)} \label{target}
\end{align}
where the coefficients $a^{i_1i_3\cdots i_{2l-1}}_{i_2i_4\cdots i_{2l}}$ are determined by the system of linear equations
\begin{align}
    0 = & \state{L\left(\begin{array}{cccc}
    j_1 & j_3 & \cdots & j_{2p-1}\\
    j_2 & j_4 & \cdots & j_{2p}
\end{array}\right)\tau} \nonumber \\
 & + \sum_{k=1}^{[r/2]}\sum_{\substack{\text{all possible choices }\\
    \text{of }i_1, i_2, ..., i_{2l-1}, i_{2l}}}x^{i_1i_3\cdots i_{2l-1}}_{i_2i_4\cdots i_{2l}}\state{L\left(\begin{array}{cccc}
    j_1 & j_3 & \cdots & j_{2p-1} \\
    j_2 & j_4 & \cdots & j_{2p}
\end{array}\right)\tau \begin{pmatrix}
i_1 & i_3 & \cdots & i_{2l-1}\\ 
i_2 & i_4 & \cdots & i_{2l}
\end{pmatrix}} \label{linsystem}
\end{align}
for every possible choice of $p = 1,..., [r/2]$ and every sequence $j_1, ..., j_{2p}$ of distinct numbers in $\{1, ..., r\}$ satisfying $j_{2q-1} < j_{2q}, q = 1, ..., p$. 

\begin{exam}
In the case $k = 2$, the polynomial (\ref{target}) we are considering is 
$$|a_1a_2a_2a_1\rangle + x^1_2 \state{L\binom 3 4 a_1 a_2 a_2 a_1} = |a_1a_2a_2a_1\rangle + x^1_2 |a_1 a_1\rangle$$
In graphs: 
$$\state{\raisebox{-.3\height}{\begin{tikzpicture}[line cap=round,line join=round,>=triangle 45,x=1cm,y=1cm, scale = 0.5]
\draw [shift={(2.5,0)},line width=2pt]  plot[domain=0:3.141592653589793,variable=\t]({1*1.5*cos(\t r)+0*1.5*sin(\t r)},{0*1.5*cos(\t r)+1*1.5*sin(\t r)});
\draw [shift={(2.5,0)},line width=2pt]  plot[domain=0:3.141592653589793,variable=\t]({1*0.5*cos(\t r)+0*0.5*sin(\t r)},{0*0.5*cos(\t r)+1*0.5*sin(\t r)});
\begin{scriptsize}
\draw [fill=ffffff] (1,0) circle (2.5pt);
\draw [fill=ffffff] (2,0) circle (2.5pt);
\draw [fill=ffffff] (3,0) circle (2.5pt);
\draw [fill=ffffff] (4,0) circle (2.5pt);
\end{scriptsize}
\end{tikzpicture}}} + x^1_2 \state{\raisebox{-.45\height}{\begin{tikzpicture}[line cap=round,line join=round,>=triangle 45,x=1cm,y=1cm, scale = 0.5]
\draw [shift={(2.5,0)},line width=2pt]  plot[domain=0:3.141592653589793,variable=\t]({1*1.5*cos(\t r)+0*1.5*sin(\t r)},{0*1.5*cos(\t r)+1*1.5*sin(\t r)});
\draw [shift={(2.5,0)},line width=2pt]  plot[domain=0:3.141592653589793,variable=\t]({1*0.5*cos(\t r)+0*0.5*sin(\t r)},{0*0.5*cos(\t r)+1*0.5*sin(\t r)});
\draw [shift={(3.5,0)},line width=2pt, dash pattern = on 1 pt off 3 pt]  plot[domain=3.141592653589793:6.283185307179586,variable=\t]({1*0.5*cos(\t r)+0*0.5*sin(\t r)},{0*0.5*cos(\t r)+1*0.5*sin(\t r)});
\begin{scriptsize}
\draw [fill=ffffff] (2,0) circle (2.5pt);
\draw [fill=ffffff] (1,0) circle (2.5pt);
\draw [fill=ffffff] (4,0) circle (2.5pt);
\draw [fill=ffffff] (3,0) circle (2.5pt);
\end{scriptsize}
\end{tikzpicture}}} = \state{\raisebox{-.3\height}{\begin{tikzpicture}[line cap=round,line join=round,>=triangle 45,x=1cm,y=1cm, scale = 0.5]
\draw [shift={(2.5,0)},line width=2pt]  plot[domain=0:3.141592653589793,variable=\t]({1*1.5*cos(\t r)+0*1.5*sin(\t r)},{0*1.5*cos(\t r)+1*1.5*sin(\t r)});
\draw [shift={(2.5,0)},line width=2pt]  plot[domain=0:3.141592653589793,variable=\t]({1*0.5*cos(\t r)+0*0.5*sin(\t r)},{0*0.5*cos(\t r)+1*0.5*sin(\t r)});
\begin{scriptsize}
\draw [fill=ffffff] (1,0) circle (2.5pt);
\draw [fill=ffffff] (2,0) circle (2.5pt);
\draw [fill=ffffff] (3,0) circle (2.5pt);
\draw [fill=ffffff] (4,0) circle (2.5pt);
\end{scriptsize}
\end{tikzpicture}}} + x^2_1 \state{\raisebox{-.3\height}{\begin{tikzpicture}[line cap=round,line join=round,>=triangle 45,x=1cm,y=1cm, scale = 0.5]
\draw [shift={(2.5,0)},line width=2pt]  plot[domain=0:3.141592653589793,variable=\t]({1*0.5*cos(\t r)+0*0.5*sin(\t r)},{0*0.5*cos(\t r)+1*0.5*sin(\t r)});
\begin{scriptsize}
\draw [fill=ffffff] (2,0) circle (2.5pt);
\draw [fill=ffffff] (3,0) circle (2.5pt);
\end{scriptsize}
\end{tikzpicture}}} $$
We determine $x_1^2$ by
$$\state{L\binom 1 2 ``a_1a_2a_2a_1"} + x^1_2 \state{L\binom 1 2 ``a_1a_1"} = |a_1a_1\rangle + x^1_2 \cdot n = 0.$$
In graphs: 
$$\state{\raisebox{-.45\height}{\begin{tikzpicture}[line cap=round,line join=round,>=triangle 45,x=1cm,y=1cm, scale = 0.5]
\draw [shift={(2.5,0)},line width=2pt]  plot[domain=0:3.141592653589793,variable=\t]({1*1.5*cos(\t r)+0*1.5*sin(\t r)},{0*1.5*cos(\t r)+1*1.5*sin(\t r)});
\draw [shift={(2.5,0)},line width=2pt]  plot[domain=0:3.141592653589793,variable=\t]({1*0.5*cos(\t r)+0*0.5*sin(\t r)},{0*0.5*cos(\t r)+1*0.5*sin(\t r)});
\draw [shift={(1.5,0)},line width=2pt, dash pattern = on 1 pt off 3 pt]  plot[domain=3.141592653589793:6.283185307179586,variable=\t]({1*0.5*cos(\t r)+0*0.5*sin(\t r)},{0*0.5*cos(\t r)+1*0.5*sin(\t r)});
\begin{scriptsize}
\draw [fill=ffffff] (1,0) circle (2.5pt);
\draw [fill=ffffff] (2,0) circle (2.5pt);
\draw [fill=ffffff] (3,0) circle (2.5pt);
\draw [fill=ffffff] (4,0) circle (2.5pt);
\end{scriptsize}
\end{tikzpicture}}} + x^1_2 \state{\raisebox{-.45\height}{\begin{tikzpicture}[line cap=round,line join=round,>=triangle 45,x=1cm,y=1cm, scale = 0.5]
\draw [shift={(2.5,0)},line width=2pt]  plot[domain=0:3.141592653589793,variable=\t]({1*1.5*cos(\t r)+0*1.5*sin(\t r)},{0*1.5*cos(\t r)+1*1.5*sin(\t r)});
\draw [shift={(2.5,0)},line width=2pt]  plot[domain=0:3.141592653589793,variable=\t]({1*0.5*cos(\t r)+0*0.5*sin(\t r)},{0*0.5*cos(\t r)+1*0.5*sin(\t r)});
\draw [shift={(1.5,0)},line width=2pt, dash pattern = on 1 pt off 3 pt]  plot[domain=3.141592653589793:6.283185307179586,variable=\t]({1*0.5*cos(\t r)+0*0.5*sin(\t r)},{0*0.5*cos(\t r)+1*0.5*sin(\t r)});
\draw [shift={(3.5,0)},line width=2pt, dash pattern = on 1 pt off 3 pt]  plot[domain=3.141592653589793:6.283185307179586,variable=\t]({1*0.5*cos(\t r)+0*0.5*sin(\t r)},{0*0.5*cos(\t r)+1*0.5*sin(\t r)});
\begin{scriptsize}
\draw [fill=ffffff] (2,0) circle (2.5pt);
\draw [fill=ffffff] (1,0) circle (2.5pt);
\draw [fill=ffffff] (4,0) circle (2.5pt);
\draw [fill=ffffff] (3,0) circle (2.5pt);
\end{scriptsize}
\end{tikzpicture}}} = \state{\raisebox{-.3\height}{\begin{tikzpicture}[line cap=round,line join=round,>=triangle 45,x=1cm,y=1cm, scale = 0.5]
\draw [shift={(2.5,0)},line width=2pt]  plot[domain=0:3.141592653589793,variable=\t]({1*0.5*cos(\t r)+0*0.5*sin(\t r)},{0*0.5*cos(\t r)+1*0.5*sin(\t r)});
\begin{scriptsize}
\draw [fill=ffffff] (2,0) circle (2.5pt);
\draw [fill=ffffff] (3,0) circle (2.5pt);
\end{scriptsize}
\end{tikzpicture}}} + x^2_1 \cdot n  = 0 $$
Thus $x^1_2 = -\theta / n$. Then the polynomial simplifies to 
$$\theta(\theta+K(n-1))-\frac \theta n \cdot \theta = \frac{n-1}{n}\theta(\theta+Kn). $$

\end{exam}

\begin{exam}
In the case $k = 3$, we are considering the polynomial 
\begin{align*}
    & |a_1a_2a_3a_3a_2a_1\rangle + x^1_2 \state{L\binom 5 6 ``a_1 a_2a_3 a_3a_2a_1"} + x^1_3 \state{L \binom 4 6 ``a_1a_2a_3a_3a_2a_1"} + x^2_3 \state{L \binom 4 5 ``a_1a_2a_3a_3a_2a_1"} \\
    = & |a_1a_2a_3a_3a_2a_1\rangle + x^1_2 \state{a_1 a_1 a_3 a_3} + x^1_3 \state{a_1a_2a_1a_2} + x^2_3 \state{ a_1a_2a_2a_1}. 
\end{align*}
In graphs: 
\begin{align*}
    & \state{\raisebox{-.3\height}{\begin{tikzpicture}[line cap=round,line join=round,>=triangle 45,x=1cm,y=1cm, scale = 0.3]
    \draw [shift={(3.5,0)},line width=2pt]  plot[domain=0:3.141592653589793,variable=\t]({1*0.5*cos(\t r)+0*0.5*sin(\t r)},{0*0.5*cos(\t r)+1*0.5*sin(\t r)});
    \draw [shift={(3.5,0)},line width=2pt]  plot[domain=0:3.141592653589793,variable=\t]({1*2.5*cos(\t r)+0*2.5*sin(\t r)},{0*2.5*cos(\t r)+1*2.5*sin(\t r)});
    \draw [shift={(3.5,0)},line width=2pt]  plot[domain=0:3.141592653589793,variable=\t]({1*1.5*cos(\t r)+0*1.5*sin(\t r)},{0*1.5*cos(\t r)+1*1.5*sin(\t r)});
    \begin{scriptsize}
    \draw [fill=ffffff] (1,0) circle (2.5pt);
    \draw [fill=ffffff] (2,0) circle (2.5pt);
    \draw [fill=ffffff] (5,0) circle (2.5pt);
    \draw [fill=ffffff] (3,0) circle (2.5pt);
    \draw [fill=ffffff] (4,0) circle (2.5pt);
    \draw [fill=ffffff] (6,0) circle (2.5pt);
    \end{scriptsize}
    \end{tikzpicture}}} 
    + x^1_2 \state{\raisebox{-.45\height}{\begin{tikzpicture}[line cap=round,line join=round,>=triangle 45,x=1cm,y=1cm, scale = 0.3]
    \draw [shift={(3.5,0)},line width=2pt]  plot[domain=0:3.141592653589793,variable=\t]({1*0.5*cos(\t r)+0*0.5*sin(\t r)},{0*0.5*cos(\t r)+1*0.5*sin(\t r)});
    \draw [shift={(3.5,0)},line width=2pt]  plot[domain=0:3.141592653589793,variable=\t]({1*2.5*cos(\t r)+0*2.5*sin(\t r)},{0*2.5*cos(\t r)+1*2.5*sin(\t r)});
    \draw [shift={(3.5,0)},line width=2pt]  plot[domain=0:3.141592653589793,variable=\t]({1*1.5*cos(\t r)+0*1.5*sin(\t r)},{0*1.5*cos(\t r)+1*1.5*sin(\t r)});
    \draw [shift={(5.5,0)},line width=2pt, dash pattern = on 1 pt off 3 pt]  plot[domain=3.141592653589793:6.283185307179586,variable=\t]({1*0.5*cos(\t r)+0*0.5*sin(\t r)},{0*0.5*cos(\t r)+1*0.5*sin(\t r)});
    \begin{scriptsize}
    \draw [fill=ffffff] (1,0) circle (2.5pt);
    \draw [fill=ffffff] (2,0) circle (2.5pt);
    \draw [fill=ffffff] (5,0) circle (2.5pt);
    \draw [fill=ffffff] (3,0) circle (2.5pt);
    \draw [fill=ffffff] (4,0) circle (2.5pt);
    \draw [fill=ffffff] (6,0) circle (2.5pt);
    \end{scriptsize}
    \end{tikzpicture}}} 
    + x^1_3 \state{\raisebox{-.5\height}{\begin{tikzpicture}[line cap=round,line join=round,>=triangle 45,x=1cm,y=1cm, scale = 0.3]
    \draw [shift={(3.5,0)},line width=2pt]  plot[domain=0:3.141592653589793,variable=\t]({1*0.5*cos(\t r)+0*0.5*sin(\t r)},{0*0.5*cos(\t r)+1*0.5*sin(\t r)});
    \draw [shift={(3.5,0)},line width=2pt]  plot[domain=0:3.141592653589793,variable=\t]({1*2.5*cos(\t r)+0*2.5*sin(\t r)},{0*2.5*cos(\t r)+1*2.5*sin(\t r)});
    \draw [shift={(3.5,0)},line width=2pt]  plot[domain=0:3.141592653589793,variable=\t]({1*1.5*cos(\t r)+0*1.5*sin(\t r)},{0*1.5*cos(\t r)+1*1.5*sin(\t r)});
    \draw [shift={(5,0)},line width=2pt, dash pattern = on 1 pt off 3 pt]  plot[domain=3.141592653589793:6.283185307179586,variable=\t]({1*1*cos(\t r)+0*1*sin(\t r)},{0*1*cos(\t r)+1*1*sin(\t r)});
    \begin{scriptsize}
    \draw [fill=ffffff] (1,0) circle (2.5pt);
    \draw [fill=ffffff] (2,0) circle (2.5pt);
    \draw [fill=ffffff] (5,0) circle (2.5pt);
    \draw [fill=ffffff] (3,0) circle (2.5pt);
    \draw [fill=ffffff] (4,0) circle (2.5pt);
    \draw [fill=ffffff] (6,0) circle (2.5pt);
    \end{scriptsize}
    \end{tikzpicture}}} 
    + x^2_3 \state{\raisebox{-.45\height}{\begin{tikzpicture}[line cap=round,line join=round,>=triangle 45,x=1cm,y=1cm, scale = 0.3]
    \draw [shift={(3.5,0)},line width=2pt]  plot[domain=0:3.141592653589793,variable=\t]({1*0.5*cos(\t r)+0*0.5*sin(\t r)},{0*0.5*cos(\t r)+1*0.5*sin(\t r)});
    \draw [shift={(3.5,0)},line width=2pt]  plot[domain=0:3.141592653589793,variable=\t]({1*2.5*cos(\t r)+0*2.5*sin(\t r)},{0*2.5*cos(\t r)+1*2.5*sin(\t r)});
    \draw [shift={(3.5,0)},line width=2pt]  plot[domain=0:3.141592653589793,variable=\t]({1*1.5*cos(\t r)+0*1.5*sin(\t r)},{0*1.5*cos(\t r)+1*1.5*sin(\t r)});
    \draw [shift={(4.5,0)},line width=2pt, dash pattern = on 1 pt off 3 pt]  plot[domain=3.141592653589793:6.283185307179586,variable=\t]({1*0.5*cos(\t r)+0*0.5*sin(\t r)},{0*0.5*cos(\t r)+1*0.5*sin(\t r)});
    \begin{scriptsize}
    \draw [fill=ffffff] (1,0) circle (2.5pt);
    \draw [fill=ffffff] (2,0) circle (2.5pt);
    \draw [fill=ffffff] (5,0) circle (2.5pt);
    \draw [fill=ffffff] (3,0) circle (2.5pt);
    \draw [fill=ffffff] (4,0) circle (2.5pt);
    \draw [fill=ffffff] (6,0) circle (2.5pt);
    \end{scriptsize}
    \end{tikzpicture}}} \\
    = & \state{\raisebox{-.3\height}{\begin{tikzpicture}[line cap=round,line join=round,>=triangle 45,x=1cm,y=1cm, scale = 0.3]
    \draw [shift={(3.5,0)},line width=2pt]  plot[domain=0:3.141592653589793,variable=\t]({1*0.5*cos(\t r)+0*0.5*sin(\t r)},{0*0.5*cos(\t r)+1*0.5*sin(\t r)});
    \draw [shift={(3.5,0)},line width=2pt]  plot[domain=0:3.141592653589793,variable=\t]({1*2.5*cos(\t r)+0*2.5*sin(\t r)},{0*2.5*cos(\t r)+1*2.5*sin(\t r)});
    \draw [shift={(3.5,0)},line width=2pt]  plot[domain=0:3.141592653589793,variable=\t]({1*1.5*cos(\t r)+0*1.5*sin(\t r)},{0*1.5*cos(\t r)+1*1.5*sin(\t r)});
    \begin{scriptsize}
    \draw [fill=ffffff] (1,0) circle (2.5pt);
    \draw [fill=ffffff] (2,0) circle (2.5pt);
    \draw [fill=ffffff] (5,0) circle (2.5pt);
    \draw [fill=ffffff] (3,0) circle (2.5pt);
    \draw [fill=ffffff] (4,0) circle (2.5pt);
    \draw [fill=ffffff] (6,0) circle (2.5pt);
    \end{scriptsize}
    \end{tikzpicture}}} 
    + x^1_2 \state{\raisebox{-.3\height}{\begin{tikzpicture}[line cap=round,line join=round,>=triangle 45,x=1cm,y=1cm, scale = 0.3]
    \draw [shift={(1.5,0)},line width=2pt]  plot[domain=0:3.141592653589793,variable=\t]({1*0.5*cos(\t r)+0*0.5*sin(\t r)},{0*0.5*cos(\t r)+1*0.5*sin(\t r)});
    \draw [shift={(3.5,0)},line width=2pt]  plot[domain=0:3.141592653589793,variable=\t]({1*0.5*cos(\t r)+0*0.5*sin(\t r)},{0*0.5*cos(\t r)+1*0.5*sin(\t r)});
    \begin{scriptsize}
    \draw [fill=ffffff] (1,0) circle (2.5pt);
    \draw [fill=ffffff] (2,0) circle (2.5pt);
    \draw [fill=ffffff] (3,0) circle (2.5pt);
    \draw [fill=ffffff] (4,0) circle (2.5pt);
    \end{scriptsize}
    \end{tikzpicture}}} 
    + x^1_3 \state{\raisebox{-.3\height}{\begin{tikzpicture}[line cap=round,line join=round,>=triangle 45,x=1cm,y=1cm, scale = 0.3]
    \draw [shift={(2,0)},line width=2pt]  plot[domain=0:3.141592653589793,variable=\t]({1*1*cos(\t r)+0*1*sin(\t r)},{0*1*cos(\t r)+1*1*sin(\t r)});
    \draw [shift={(3,0)},line width=2pt]  plot[domain=0:3.141592653589793,variable=\t]({1*1*cos(\t r)+0*1*sin(\t r)},{0*1*cos(\t r)+1*1*sin(\t r)});
    \begin{scriptsize}
    \draw [fill=ffffff] (1,0) circle (2.5pt);
    \draw [fill=ffffff] (2,0) circle (2.5pt);
    \draw [fill=ffffff] (3,0) circle (2.5pt);
    \draw [fill=ffffff] (4,0) circle (2.5pt);
    \end{scriptsize}
    \end{tikzpicture}}} 
    + x^2_3 \state{\raisebox{-.3\height}{\begin{tikzpicture}[line cap=round,line join=round,>=triangle 45,x=1cm,y=1cm, scale = 0.3]
\draw [shift={(2.5,0)},line width=2pt]  plot[domain=0:3.141592653589793,variable=\t]({1*1.5*cos(\t r)+0*1.5*sin(\t r)},{0*1.5*cos(\t r)+1*1.5*sin(\t r)});
\draw [shift={(2.5,0)},line width=2pt]  plot[domain=0:3.141592653589793,variable=\t]({1*0.5*cos(\t r)+0*0.5*sin(\t r)},{0*0.5*cos(\t r)+1*0.5*sin(\t r)});
\begin{scriptsize}
\draw [fill=ffffff] (1,0) circle (2.5pt);
\draw [fill=ffffff] (2,0) circle (2.5pt);
\draw [fill=ffffff] (3,0) circle (2.5pt);
\draw [fill=ffffff] (4,0) circle (2.5pt);
\end{scriptsize}
\end{tikzpicture}}} 
\end{align*}
We determine $x^1_2, x^1_3, x^2_3$ by
\begin{align*}
    & \state{L\binom 1 2 ``a_1a_2a_3a_3a_2a_1"} + x^1_2 \state{L\binom 1 2 ``a_1a_1a_3a_3"} + x^1_3 \state{L\binom 1 2 ``a_1a_2a_1a_2"} + x^2_3 \state{L\binom 1 2 ``a_1a_2a_2a_1"} = 0\\
    & \state{L\binom 1 3 ``a_1a_2a_3a_3a_2a_1"} + x^1_2 \state{L\binom 1 3 ``a_1a_1a_3a_3"} + x^1_3 \state{L\binom 1 3 ``a_1a_2a_1a_2"} + x^2_3 \state{L\binom 1 3 ``a_1a_2a_2a_1"} = 0\\
    & \state{L\binom 2 3 ``a_1a_2a_3a_3a_2a_1"} + x^1_2 \state{L\binom 2 3 ``a_1a_1a_3a_3"} + x^1_3 \state{L\binom 2 3 ``a_1a_2a_1a_2"} + x^2_3 \state{L\binom 2 3 ``a_1a_2a_2a_1"} = 0
\end{align*} 
In graphs:
\begin{align*}
    & \state{\raisebox{-.45\height}{\begin{tikzpicture}[line cap=round,line join=round,>=triangle 45,x=1cm,y=1cm, scale = 0.3]
    \draw [shift={(3.5,0)},line width=2pt]  plot[domain=0:3.141592653589793,variable=\t]({1*0.5*cos(\t r)+0*0.5*sin(\t r)},{0*0.5*cos(\t r)+1*0.5*sin(\t r)});
    \draw [shift={(3.5,0)},line width=2pt]  plot[domain=0:3.141592653589793,variable=\t]({1*2.5*cos(\t r)+0*2.5*sin(\t r)},{0*2.5*cos(\t r)+1*2.5*sin(\t r)});
    \draw [shift={(3.5,0)},line width=2pt]  plot[domain=0:3.141592653589793,variable=\t]({1*1.5*cos(\t r)+0*1.5*sin(\t r)},{0*1.5*cos(\t r)+1*1.5*sin(\t r)});
    \draw [shift={(1.5,0)},line width=2pt, dash pattern = on 1 pt off 3 pt]  plot[domain=3.141592653589793:6.283185307179586,variable=\t]({1*0.5*cos(\t r)+0*0.5*sin(\t r)},{0*0.5*cos(\t r)+1*0.5*sin(\t r)});
    \begin{scriptsize}
    \draw [fill=ffffff] (1,0) circle (2.5pt);
    \draw [fill=ffffff] (2,0) circle (2.5pt);
    \draw [fill=ffffff] (5,0) circle (2.5pt);
    \draw [fill=ffffff] (3,0) circle (2.5pt);
    \draw [fill=ffffff] (4,0) circle (2.5pt);
    \draw [fill=ffffff] (6,0) circle (2.5pt);
    \end{scriptsize}
    \end{tikzpicture}}} 
    + x^1_2 \state{\raisebox{-.45\height}{\begin{tikzpicture}[line cap=round,line join=round,>=triangle 45,x=1cm,y=1cm, scale = 0.3]
    \draw [shift={(3.5,0)},line width=2pt]  plot[domain=0:3.141592653589793,variable=\t]({1*0.5*cos(\t r)+0*0.5*sin(\t r)},{0*0.5*cos(\t r)+1*0.5*sin(\t r)});
    \draw [shift={(3.5,0)},line width=2pt]  plot[domain=0:3.141592653589793,variable=\t]({1*2.5*cos(\t r)+0*2.5*sin(\t r)},{0*2.5*cos(\t r)+1*2.5*sin(\t r)});
    \draw [shift={(3.5,0)},line width=2pt]  plot[domain=0:3.141592653589793,variable=\t]({1*1.5*cos(\t r)+0*1.5*sin(\t r)},{0*1.5*cos(\t r)+1*1.5*sin(\t r)});
    \draw [shift={(5.5,0)},line width=2pt, dash pattern = on 1 pt off 3 pt]  plot[domain=3.141592653589793:6.283185307179586,variable=\t]({1*0.5*cos(\t r)+0*0.5*sin(\t r)},{0*0.5*cos(\t r)+1*0.5*sin(\t r)});
    \draw [shift={(1.5,0)},line width=2pt, dash pattern = on 1 pt off 3 pt]  plot[domain=3.141592653589793:6.283185307179586,variable=\t]({1*0.5*cos(\t r)+0*0.5*sin(\t r)},{0*0.5*cos(\t r)+1*0.5*sin(\t r)});
    \begin{scriptsize}
    \draw [fill=ffffff] (1,0) circle (2.5pt);
    \draw [fill=ffffff] (2,0) circle (2.5pt);
    \draw [fill=ffffff] (5,0) circle (2.5pt);
    \draw [fill=ffffff] (3,0) circle (2.5pt);
    \draw [fill=ffffff] (4,0) circle (2.5pt);
    \draw [fill=ffffff] (6,0) circle (2.5pt);
    \end{scriptsize}
    \end{tikzpicture}}} 
    + x^1_3 \state{\raisebox{-.5\height}{\begin{tikzpicture}[line cap=round,line join=round,>=triangle 45,x=1cm,y=1cm, scale = 0.3]
    \draw [shift={(3.5,0)},line width=2pt]  plot[domain=0:3.141592653589793,variable=\t]({1*0.5*cos(\t r)+0*0.5*sin(\t r)},{0*0.5*cos(\t r)+1*0.5*sin(\t r)});
    \draw [shift={(3.5,0)},line width=2pt]  plot[domain=0:3.141592653589793,variable=\t]({1*2.5*cos(\t r)+0*2.5*sin(\t r)},{0*2.5*cos(\t r)+1*2.5*sin(\t r)});
    \draw [shift={(3.5,0)},line width=2pt]  plot[domain=0:3.141592653589793,variable=\t]({1*1.5*cos(\t r)+0*1.5*sin(\t r)},{0*1.5*cos(\t r)+1*1.5*sin(\t r)});
    \draw [shift={(5,0)},line width=2pt, dash pattern = on 1 pt off 3 pt]  plot[domain=3.141592653589793:6.283185307179586,variable=\t]({1*1*cos(\t r)+0*1*sin(\t r)},{0*1*cos(\t r)+1*1*sin(\t r)});
    \draw [shift={(1.5,0)},line width=2pt, dash pattern = on 1 pt off 3 pt]  plot[domain=3.141592653589793:6.283185307179586,variable=\t]({1*0.5*cos(\t r)+0*0.5*sin(\t r)},{0*0.5*cos(\t r)+1*0.5*sin(\t r)});
    \begin{scriptsize}
    \draw [fill=ffffff] (1,0) circle (2.5pt);
    \draw [fill=ffffff] (2,0) circle (2.5pt);
    \draw [fill=ffffff] (5,0) circle (2.5pt);
    \draw [fill=ffffff] (3,0) circle (2.5pt);
    \draw [fill=ffffff] (4,0) circle (2.5pt);
    \draw [fill=ffffff] (6,0) circle (2.5pt);
    \end{scriptsize}
    \end{tikzpicture}}} 
    + x^2_3 \state{\raisebox{-.45\height}{\begin{tikzpicture}[line cap=round,line join=round,>=triangle 45,x=1cm,y=1cm, scale = 0.3]
    \draw [shift={(3.5,0)},line width=2pt]  plot[domain=0:3.141592653589793,variable=\t]({1*0.5*cos(\t r)+0*0.5*sin(\t r)},{0*0.5*cos(\t r)+1*0.5*sin(\t r)});
    \draw [shift={(3.5,0)},line width=2pt]  plot[domain=0:3.141592653589793,variable=\t]({1*2.5*cos(\t r)+0*2.5*sin(\t r)},{0*2.5*cos(\t r)+1*2.5*sin(\t r)});
    \draw [shift={(3.5,0)},line width=2pt]  plot[domain=0:3.141592653589793,variable=\t]({1*1.5*cos(\t r)+0*1.5*sin(\t r)},{0*1.5*cos(\t r)+1*1.5*sin(\t r)});
    \draw [shift={(4.5,0)},line width=2pt, dash pattern = on 1 pt off 3 pt]  plot[domain=3.141592653589793:6.283185307179586,variable=\t]({1*0.5*cos(\t r)+0*0.5*sin(\t r)},{0*0.5*cos(\t r)+1*0.5*sin(\t r)});
    \draw [shift={(1.5,0)},line width=2pt, dash pattern = on 1 pt off 3 pt]  plot[domain=3.141592653589793:6.283185307179586,variable=\t]({1*0.5*cos(\t r)+0*0.5*sin(\t r)},{0*0.5*cos(\t r)+1*0.5*sin(\t r)});
    \begin{scriptsize}
    \draw [fill=ffffff] (1,0) circle (2.5pt);
    \draw [fill=ffffff] (2,0) circle (2.5pt);
    \draw [fill=ffffff] (5,0) circle (2.5pt);
    \draw [fill=ffffff] (3,0) circle (2.5pt);
    \draw [fill=ffffff] (4,0) circle (2.5pt);
    \draw [fill=ffffff] (6,0) circle (2.5pt);
    \end{scriptsize}
    \end{tikzpicture}}}  = 0 \\
    & \state{\raisebox{-.45\height}{\begin{tikzpicture}[line cap=round,line join=round,>=triangle 45,x=1cm,y=1cm, scale = 0.3]
    \draw [shift={(3.5,0)},line width=2pt]  plot[domain=0:3.141592653589793,variable=\t]({1*0.5*cos(\t r)+0*0.5*sin(\t r)},{0*0.5*cos(\t r)+1*0.5*sin(\t r)});
    \draw [shift={(3.5,0)},line width=2pt]  plot[domain=0:3.141592653589793,variable=\t]({1*2.5*cos(\t r)+0*2.5*sin(\t r)},{0*2.5*cos(\t r)+1*2.5*sin(\t r)});
    \draw [shift={(3.5,0)},line width=2pt]  plot[domain=0:3.141592653589793,variable=\t]({1*1.5*cos(\t r)+0*1.5*sin(\t r)},{0*1.5*cos(\t r)+1*1.5*sin(\t r)});
    \draw [shift={(2,0)},line width=2pt, dash pattern = on 1 pt off 3 pt]  plot[domain=3.141592653589793:6.283185307179586,variable=\t]({1*1*cos(\t r)+0*1*sin(\t r)},{0*1*cos(\t r)+1*1*sin(\t r)});
    \begin{scriptsize}
    \draw [fill=ffffff] (1,0) circle (2.5pt);
    \draw [fill=ffffff] (2,0) circle (2.5pt);
    \draw [fill=ffffff] (5,0) circle (2.5pt);
    \draw [fill=ffffff] (3,0) circle (2.5pt);
    \draw [fill=ffffff] (4,0) circle (2.5pt);
    \draw [fill=ffffff] (6,0) circle (2.5pt);
    \end{scriptsize}
    \end{tikzpicture}}} 
    + x^1_2 \state{\raisebox{-.45\height}{\begin{tikzpicture}[line cap=round,line join=round,>=triangle 45,x=1cm,y=1cm, scale = 0.3]
    \draw [shift={(3.5,0)},line width=2pt]  plot[domain=0:3.141592653589793,variable=\t]({1*0.5*cos(\t r)+0*0.5*sin(\t r)},{0*0.5*cos(\t r)+1*0.5*sin(\t r)});
    \draw [shift={(3.5,0)},line width=2pt]  plot[domain=0:3.141592653589793,variable=\t]({1*2.5*cos(\t r)+0*2.5*sin(\t r)},{0*2.5*cos(\t r)+1*2.5*sin(\t r)});
    \draw [shift={(3.5,0)},line width=2pt]  plot[domain=0:3.141592653589793,variable=\t]({1*1.5*cos(\t r)+0*1.5*sin(\t r)},{0*1.5*cos(\t r)+1*1.5*sin(\t r)});
    \draw [shift={(5.5,0)},line width=2pt, dash pattern = on 1 pt off 3 pt]  plot[domain=3.141592653589793:6.283185307179586,variable=\t]({1*0.5*cos(\t r)+0*0.5*sin(\t r)},{0*0.5*cos(\t r)+1*0.5*sin(\t r)});
    \draw [shift={(2,0)},line width=2pt, dash pattern = on 1 pt off 3 pt]  plot[domain=3.141592653589793:6.283185307179586,variable=\t]({1*1*cos(\t r)+0*1*sin(\t r)},{0*1*cos(\t r)+1*1*sin(\t r)});
    \begin{scriptsize}
    \draw [fill=ffffff] (1,0) circle (2.5pt);
    \draw [fill=ffffff] (2,0) circle (2.5pt);
    \draw [fill=ffffff] (5,0) circle (2.5pt);
    \draw [fill=ffffff] (3,0) circle (2.5pt);
    \draw [fill=ffffff] (4,0) circle (2.5pt);
    \draw [fill=ffffff] (6,0) circle (2.5pt);
    \end{scriptsize}
    \end{tikzpicture}}} 
    + x^1_3 \state{\raisebox{-.5\height}{\begin{tikzpicture}[line cap=round,line join=round,>=triangle 45,x=1cm,y=1cm, scale = 0.3]
    \draw [shift={(3.5,0)},line width=2pt]  plot[domain=0:3.141592653589793,variable=\t]({1*0.5*cos(\t r)+0*0.5*sin(\t r)},{0*0.5*cos(\t r)+1*0.5*sin(\t r)});
    \draw [shift={(3.5,0)},line width=2pt]  plot[domain=0:3.141592653589793,variable=\t]({1*2.5*cos(\t r)+0*2.5*sin(\t r)},{0*2.5*cos(\t r)+1*2.5*sin(\t r)});
    \draw [shift={(3.5,0)},line width=2pt]  plot[domain=0:3.141592653589793,variable=\t]({1*1.5*cos(\t r)+0*1.5*sin(\t r)},{0*1.5*cos(\t r)+1*1.5*sin(\t r)});
    \draw [shift={(5,0)},line width=2pt, dash pattern = on 1 pt off 3 pt]  plot[domain=3.141592653589793:6.283185307179586,variable=\t]({1*1*cos(\t r)+0*1*sin(\t r)},{0*1*cos(\t r)+1*1*sin(\t r)});
    \draw [shift={(2,0)},line width=2pt, dash pattern = on 1 pt off 3 pt]  plot[domain=3.141592653589793:6.283185307179586,variable=\t]({1*1*cos(\t r)+0*1*sin(\t r)},{0*1*cos(\t r)+1*1*sin(\t r)});
    \begin{scriptsize}
    \draw [fill=ffffff] (1,0) circle (2.5pt);
    \draw [fill=ffffff] (2,0) circle (2.5pt);
    \draw [fill=ffffff] (5,0) circle (2.5pt);
    \draw [fill=ffffff] (3,0) circle (2.5pt);
    \draw [fill=ffffff] (4,0) circle (2.5pt);
    \draw [fill=ffffff] (6,0) circle (2.5pt);
    \end{scriptsize}
    \end{tikzpicture}}} 
    + x^2_3 \state{\raisebox{-.45\height}{\begin{tikzpicture}[line cap=round,line join=round,>=triangle 45,x=1cm,y=1cm, scale = 0.3]
    \draw [shift={(3.5,0)},line width=2pt]  plot[domain=0:3.141592653589793,variable=\t]({1*0.5*cos(\t r)+0*0.5*sin(\t r)},{0*0.5*cos(\t r)+1*0.5*sin(\t r)});
    \draw [shift={(3.5,0)},line width=2pt]  plot[domain=0:3.141592653589793,variable=\t]({1*2.5*cos(\t r)+0*2.5*sin(\t r)},{0*2.5*cos(\t r)+1*2.5*sin(\t r)});
    \draw [shift={(3.5,0)},line width=2pt]  plot[domain=0:3.141592653589793,variable=\t]({1*1.5*cos(\t r)+0*1.5*sin(\t r)},{0*1.5*cos(\t r)+1*1.5*sin(\t r)});
    \draw [shift={(4.5,0)},line width=2pt, dash pattern = on 1 pt off 3 pt]  plot[domain=3.141592653589793:6.283185307179586,variable=\t]({1*0.5*cos(\t r)+0*0.5*sin(\t r)},{0*0.5*cos(\t r)+1*0.5*sin(\t r)});
    \draw [shift={(2,0)},line width=2pt, dash pattern = on 1 pt off 3 pt]  plot[domain=3.141592653589793:6.283185307179586,variable=\t]({1*1*cos(\t r)+0*1*sin(\t r)},{0*1*cos(\t r)+1*1*sin(\t r)});
    \begin{scriptsize}
    \draw [fill=ffffff] (1,0) circle (2.5pt);
    \draw [fill=ffffff] (2,0) circle (2.5pt);
    \draw [fill=ffffff] (5,0) circle (2.5pt);
    \draw [fill=ffffff] (3,0) circle (2.5pt);
    \draw [fill=ffffff] (4,0) circle (2.5pt);
    \draw [fill=ffffff] (6,0) circle (2.5pt);
    \end{scriptsize}
    \end{tikzpicture}}}  = 0 \\
    & \state{\raisebox{-.45\height}{\begin{tikzpicture}[line cap=round,line join=round,>=triangle 45,x=1cm,y=1cm, scale = 0.3]
    \draw [shift={(3.5,0)},line width=2pt]  plot[domain=0:3.141592653589793,variable=\t]({1*0.5*cos(\t r)+0*0.5*sin(\t r)},{0*0.5*cos(\t r)+1*0.5*sin(\t r)});
    \draw [shift={(3.5,0)},line width=2pt]  plot[domain=0:3.141592653589793,variable=\t]({1*2.5*cos(\t r)+0*2.5*sin(\t r)},{0*2.5*cos(\t r)+1*2.5*sin(\t r)});
    \draw [shift={(3.5,0)},line width=2pt]  plot[domain=0:3.141592653589793,variable=\t]({1*1.5*cos(\t r)+0*1.5*sin(\t r)},{0*1.5*cos(\t r)+1*1.5*sin(\t r)});
    \draw [shift={(2.5,0)},line width=2pt, dash pattern = on 1 pt off 3 pt]  plot[domain=3.141592653589793:6.283185307179586,variable=\t]({1*0.5*cos(\t r)+0*0.5*sin(\t r)},{0*0.5*cos(\t r)+1*0.5*sin(\t r)});
    \begin{scriptsize}
    \draw [fill=ffffff] (1,0) circle (2.5pt);
    \draw [fill=ffffff] (2,0) circle (2.5pt);
    \draw [fill=ffffff] (5,0) circle (2.5pt);
    \draw [fill=ffffff] (3,0) circle (2.5pt);
    \draw [fill=ffffff] (4,0) circle (2.5pt);
    \draw [fill=ffffff] (6,0) circle (2.5pt);
    \end{scriptsize}
    \end{tikzpicture}}} 
    + x^1_2 \state{\raisebox{-.45\height}{\begin{tikzpicture}[line cap=round,line join=round,>=triangle 45,x=1cm,y=1cm, scale = 0.3]
    \draw [shift={(3.5,0)},line width=2pt]  plot[domain=0:3.141592653589793,variable=\t]({1*0.5*cos(\t r)+0*0.5*sin(\t r)},{0*0.5*cos(\t r)+1*0.5*sin(\t r)});
    \draw [shift={(3.5,0)},line width=2pt]  plot[domain=0:3.141592653589793,variable=\t]({1*2.5*cos(\t r)+0*2.5*sin(\t r)},{0*2.5*cos(\t r)+1*2.5*sin(\t r)});
    \draw [shift={(3.5,0)},line width=2pt]  plot[domain=0:3.141592653589793,variable=\t]({1*1.5*cos(\t r)+0*1.5*sin(\t r)},{0*1.5*cos(\t r)+1*1.5*sin(\t r)});
    \draw [shift={(5.5,0)},line width=2pt, dash pattern = on 1 pt off 3 pt]  plot[domain=3.141592653589793:6.283185307179586,variable=\t]({1*0.5*cos(\t r)+0*0.5*sin(\t r)},{0*0.5*cos(\t r)+1*0.5*sin(\t r)});
    \draw [shift={(2.5,0)},line width=2pt, dash pattern = on 1 pt off 3 pt]  plot[domain=3.141592653589793:6.283185307179586,variable=\t]({1*0.5*cos(\t r)+0*0.5*sin(\t r)},{0*0.5*cos(\t r)+1*0.5*sin(\t r)});
    \begin{scriptsize}
    \draw [fill=ffffff] (1,0) circle (2.5pt);
    \draw [fill=ffffff] (2,0) circle (2.5pt);
    \draw [fill=ffffff] (5,0) circle (2.5pt);
    \draw [fill=ffffff] (3,0) circle (2.5pt);
    \draw [fill=ffffff] (4,0) circle (2.5pt);
    \draw [fill=ffffff] (6,0) circle (2.5pt);
    \end{scriptsize}
    \end{tikzpicture}}} 
    + x^1_3 \state{\raisebox{-.5\height}{\begin{tikzpicture}[line cap=round,line join=round,>=triangle 45,x=1cm,y=1cm, scale = 0.3]
    \draw [shift={(3.5,0)},line width=2pt]  plot[domain=0:3.141592653589793,variable=\t]({1*0.5*cos(\t r)+0*0.5*sin(\t r)},{0*0.5*cos(\t r)+1*0.5*sin(\t r)});
    \draw [shift={(3.5,0)},line width=2pt]  plot[domain=0:3.141592653589793,variable=\t]({1*2.5*cos(\t r)+0*2.5*sin(\t r)},{0*2.5*cos(\t r)+1*2.5*sin(\t r)});
    \draw [shift={(3.5,0)},line width=2pt]  plot[domain=0:3.141592653589793,variable=\t]({1*1.5*cos(\t r)+0*1.5*sin(\t r)},{0*1.5*cos(\t r)+1*1.5*sin(\t r)});
    \draw [shift={(5,0)},line width=2pt, dash pattern = on 1 pt off 3 pt]  plot[domain=3.141592653589793:6.283185307179586,variable=\t]({1*1*cos(\t r)+0*1*sin(\t r)},{0*1*cos(\t r)+1*1*sin(\t r)});
    \draw [shift={(2.5,0)},line width=2pt, dash pattern = on 1 pt off 3 pt]  plot[domain=3.141592653589793:6.283185307179586,variable=\t]({1*0.5*cos(\t r)+0*0.5*sin(\t r)},{0*0.5*cos(\t r)+1*0.5*sin(\t r)});
    \begin{scriptsize}
    \draw [fill=ffffff] (1,0) circle (2.5pt);
    \draw [fill=ffffff] (2,0) circle (2.5pt);
    \draw [fill=ffffff] (5,0) circle (2.5pt);
    \draw [fill=ffffff] (3,0) circle (2.5pt);
    \draw [fill=ffffff] (4,0) circle (2.5pt);
    \draw [fill=ffffff] (6,0) circle (2.5pt);
    \end{scriptsize}
    \end{tikzpicture}}} 
    + x^2_3 \state{\raisebox{-.45\height}{\begin{tikzpicture}[line cap=round,line join=round,>=triangle 45,x=1cm,y=1cm, scale = 0.3]
    \draw [shift={(3.5,0)},line width=2pt]  plot[domain=0:3.141592653589793,variable=\t]({1*0.5*cos(\t r)+0*0.5*sin(\t r)},{0*0.5*cos(\t r)+1*0.5*sin(\t r)});
    \draw [shift={(3.5,0)},line width=2pt]  plot[domain=0:3.141592653589793,variable=\t]({1*2.5*cos(\t r)+0*2.5*sin(\t r)},{0*2.5*cos(\t r)+1*2.5*sin(\t r)});
    \draw [shift={(3.5,0)},line width=2pt]  plot[domain=0:3.141592653589793,variable=\t]({1*1.5*cos(\t r)+0*1.5*sin(\t r)},{0*1.5*cos(\t r)+1*1.5*sin(\t r)});
    \draw [shift={(4.5,0)},line width=2pt, dash pattern = on 1 pt off 3 pt]  plot[domain=3.141592653589793:6.283185307179586,variable=\t]({1*0.5*cos(\t r)+0*0.5*sin(\t r)},{0*0.5*cos(\t r)+1*0.5*sin(\t r)});
    \draw [shift={(2.5,0)},line width=2pt, dash pattern = on 1 pt off 3 pt]  plot[domain=3.141592653589793:6.283185307179586,variable=\t]({1*0.5*cos(\t r)+0*0.5*sin(\t r)},{0*0.5*cos(\t r)+1*0.5*sin(\t r)});
    \begin{scriptsize}
    \draw [fill=ffffff] (1,0) circle (2.5pt);
    \draw [fill=ffffff] (2,0) circle (2.5pt);
    \draw [fill=ffffff] (5,0) circle (2.5pt);
    \draw [fill=ffffff] (3,0) circle (2.5pt);
    \draw [fill=ffffff] (4,0) circle (2.5pt);
    \draw [fill=ffffff] (6,0) circle (2.5pt);
    \end{scriptsize}
    \end{tikzpicture}}}  = 0 
\end{align*}
The system simplifies to 
\begin{align*}
    & \state{a_3a_3a_1a_1} + x^1_2 n \state{a_3a_3} + x^1_3 \state{a_1a_1} + x^2_3 \state{a_1a_1} = \theta^2 + n\theta x^1_2 + \theta x^1_3+ \theta x^2_3 = 0\\
    & \state{a_2a_1a_2a_1} + x^1_2 \state{ a_1a_1} + x^1_3 n \state{a_2a_2} + x^2_3 \state{a_1a_1} = \theta(\theta+K(n-1)) + \theta x^1_2 + n\theta x^1_3 + \theta x^2_3 = 0\\
    & \state{a_1a_2a_2a_1} + x^1_2 \state{a_1a_1} + x^1_3 \state{a_1a_1} + x^2_3 n \state{a_1a_1} = \theta(\theta+K(n-1)) + \theta x^1_2 + \theta x^1 3 + n \theta x^2_3 = 0
\end{align*} 
One can solve the system and get 
$$x^1_2 = \frac{2K-\theta}{n+2}, x^1_3 = x^2_3 = \frac{-Kn-\theta}{n+2}$$
\end{exam}
We can then compute the polynomial (\ref{target}), which is
$$\frac{n-1}{n+2} \theta(\theta+Kn)(\theta+K(2n+2))$$

\begin{conjecture}
Each coefficient $a^{i_1i_3\cdots i_{2l-1}}_{i_2i_4\cdots i_{2l}}$ is a polynomial in $\theta$ of degree $l$. 
\end{conjecture}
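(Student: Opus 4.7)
The plan is to encode (\ref{linsystem}) as a polynomial matrix equation $A(\theta)\vec{x}(\theta) = \vec{b}(\theta)$ and analyze it via Cramer's rule with a careful $\theta$-degree count. I index rows and columns of $A$ by partial matchings $J, I$ of $\{1, \ldots, k\}$, with $A_{J,I} = \state{L(J)\tau(I)}$ and $b_J = -\state{L(J)\tau}$. Writing $|I|$ for the number of pairs in $I$, the composition $L(J)L(\widetilde I)\tau$ is, up to factors of $n$ from circle formations, a word of length $2k - 2|I| - 2|J|$; using the bound $\deg_\theta \state{w} \le \tfrac12\,\mathrm{length}(w)$ (which follows inductively from the recursion for $\state{w}$ in Section 5.2) gives $\deg_\theta A_{J,I} \le k - |I| - |J|$ and $\deg_\theta b_J \le k - |J|$.

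The key estimate is uniform across permutations: for any bijection $\pi$ of matchings, $\sum_I \deg_\theta A_{\pi(I), I} \le \sum_I (k - |I| - |\pi(I)|) = Nk - 2S =: D$, where $N$ is the number of matchings and $S = \sum_I |I|$. Thus $\deg_\theta \det A \le D$; replacing column $I$ by $\vec b$ shifts that column's contribution up by exactly $|I|$, giving $\deg_\theta \det A_I \le D + |I|$. Consequently the rational function $x^I = \det A_I/\det A$ has $\theta$-degree at most $|I| = l$, provided $\deg_\theta \det A = D$. To verify non-vanishing of the leading coefficient, I would use Proposition \ref{dominant} and its corollary to extract the top-degree terms of the $A_{J,I}$ combinatorially from the linked nested-arc graphs, then show that the resulting leading-coefficient matrix --- rows and columns ordered by increasing $|I|$ --- acquires a block structure whose diagonal blocks can be identified, up to explicit scalars in $K$ and $n$, with matrices arising in the representation theory of the Brauer algebra $B_k(n)$; generic non-singularity of these blocks (cf.\ \cite{LZ-Category}) then yields a nonzero leading coefficient, confirming $\deg_\theta \det A = D$. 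A parallel analysis of $\det A_I$ supplies the matching lower bound $\deg_\theta x^I = l$.

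The principal obstacle will be showing that $x^I$ is a genuine polynomial in $\theta$, not merely a rational function whose denominator divides $\det A$. My plan is to argue specialization-wise: for each root $\theta_0$ of $\det A$ in the algebraic closure of $\mathbb{Q}(K,n)$, one must show $\vec b(\theta_0)$ lies in the image of $A(\theta_0)$, which forces $\det A_I(\theta_0) = 0$ by a rank argument and thus yields divisibility in $\mathbb{Q}(K,n)[\theta]$. The consistency condition should reflect the same Brauer-algebraic structure that controls $\det A$, with the obstruction vanishing precisely at the exceptional values of $\theta_0$. This step is where I expect the principal difficulty, and I anticipate it to tie directly into the broader conjecture that the target polynomial (\ref{target}) vanishes at $\theta = Kp(p+n-1)$.
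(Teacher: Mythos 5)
The statement you are addressing is announced in the paper as an open conjecture: the author offers no proof, only numerical verification up to $k=4$ and a remark that the coefficient matrix of (\ref{linsystem}) is diagonally dominant for large $n$ when $\theta$ is specialized to a number. So there is no ``paper proof'' to compare against, and the relevant question is whether your proposal actually closes the problem. It does not: it is a strategy with the two decisive steps left open, and you say so yourself. First, the Cramer's-rule degree count only gives $\deg_\theta \det A \le D$ and $\deg_\theta \det A_I \le D + |I|$; to conclude $\deg_\theta x^I \le l$ you need $\deg_\theta \det A = D$ exactly, i.e.\ that the top-degree contributions from the various permutations $\pi$ do not cancel. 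Your plan to extract a block structure from Proposition \ref{dominant} and identify the diagonal blocks with Brauer-algebra data is only gestured at; note that Proposition \ref{dominant} and its corollary compute leading terms for specific families of words, whereas the entries $A_{J,I}=\state{L(J)\tau(I)}$ involve arbitrary compositions of linkings, and no lemma in the paper identifies their leading coefficients. The paper's diagonal-dominance remark gives invertibility of $A$ for large $n$ at fixed $\theta$, which is weaker than the exact degree statement you need.

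Second, and more seriously, the polynomiality of $x^I$ in $\theta$ is the actual content of the conjecture, and your proposal reduces it to the assertion that $\vec b(\theta_0)$ lies in the image of $A(\theta_0)$ at every root $\theta_0$ of $\det A$ --- a consistency condition for which you supply no mechanism. Without it, $x^I$ is only a rational function of degree (of numerator minus denominator) at most $l$, which is strictly weaker than the conjecture. Since this step is exactly where the combinatorial structure of the polynomials $\state{w}$ that the author says is not yet understood would have to enter, the proposal as written relocates the difficulty rather than resolving it. The degree-counting framework is sound as far as it goes and would be a reasonable skeleton for an eventual proof, but both load-bearing claims (non-vanishing of the leading coefficient of $\det A$, and exact divisibility of $\det A_I$ by $\det A$) remain unproved.
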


\begin{conjecture}
There exists a constant $C$ depending only on $n$, such that the polynomial (\ref{target}) is
$$C \prod_{p=0}^{k-1}(\theta+Kp(n+p-1)) = C \theta (\theta+Kn)\cdots (\theta+K(k-1)(n+k-2))$$
Conjecturally, 
$$C = \frac{(n-1)n(n+1)\cdots (n+r-2)}{n(n+2)(n+4)\cdots (n+2r-2)}$$
\end{conjecture}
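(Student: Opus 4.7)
The plan is to split the conjecture into two independent pieces. Let $P_k(\theta)$ denote the polynomial defined by (\ref{target}). The observation is that once one knows (i) $\deg_\theta P_k = k$ and (ii) $P_k$ vanishes at each of the $k$ points $\theta_p := -Kp(n+p-1)$ for $p = 0, 1, \ldots, k-1$, the conjecture follows by matching leading coefficients: a degree-$k$ polynomial with $k$ specified roots is determined up to a scalar $C$, which can then be extracted from the top-degree term of $P_k$ and checked against the base cases $k=2, 3$ already computed in the paper.

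First I would establish the subsidiary conjecture that each correction coefficient $x^{i_1 \cdots i_{2l-1}}_{i_2 \cdots i_{2l}}$ is a polynomial in $\theta$ of degree exactly $l$, by analyzing the linear system (\ref{linsystem}). The strategy is to order the equations by $p$ (the number of linked pairs in each constraint), running from $p = [k/2]$ down to $p = 1$, and to observe that identity (\ref{diagonal}) makes the system block-triangular: applying $L$ on the left with the same pattern as the right linking produces $n^k$ times a short diagonal word, whereas applying $L$ with a different pattern produces words still carrying surviving arcs, hence polynomials of strictly lower top-degree contribution. Back-substitution through this triangular system bounds the degrees of the $x$'s, and combining with Proposition \ref{dominant} yields $\deg_\theta P_k \le k$. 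The leading coefficient $C$ is then obtained by collating the top-degree contributions from $|\tau\rangle$ and from all corrections; matching the $k = 2, 3$ values should give a telescoping recursion for $C_{k+1}/C_k$ in closed form.

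The hard step will be showing $P_k(\theta_p) = 0$ for $p = 0, 1, \ldots, k-1$. My preferred strategy is representation-theoretic. View $\tau$ and the linked companions $\tau(\cdot)$ as elements of the Brauer algebra $\mathrm{Br}_k(n) \cong \mathrm{End}_{O(n)}(V^{\otimes k})$ with $V = T_pM^{\C}$, and argue that the content of the linear system (\ref{linsystem}) is exactly the condition that $P_k$, as a Brauer-algebra element, annihilates every tensor carrying a nontrivial partial contraction. On a sphere $S^n_K$ of curvature $K$ the degree-$p$ spherical harmonics form an irreducible $O(n+1)$-representation of Laplace-eigenvalue $Kp(n+p-1)$, and the $O(n)$-invariant action of $P_k$ on such a harmonic factors through projection onto the corresponding isotypic component of $V^{\otimes k}$; since the traceless harmonic pieces of $V^{\otimes k}$ carry only spherical harmonic types of degree $\ge k$, the projector must kill every degree-$p$ eigenfunction for $p < k$, yielding exactly the conjectured set of roots.

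The main obstacle is making the Brauer-algebraic identification rigorous, because the linking operator $L$ in the paper contracts positions within the right half of $\tau$ only, whereas standard Brauer contractions act symmetrically on both halves. Reconciling this asymmetry, and verifying that (\ref{linsystem}) really does single out the harmonic projector rather than some closely related element, is the crux of the argument; absent such a conceptual bridge, one is forced onto a direct combinatorial induction on $k$ that simultaneously tracks every $x$-coefficient, which already for moderate $k$ appears prohibitively intricate to execute by hand.
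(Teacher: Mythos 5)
This statement is announced in the paper as a \emph{conjecture}: the paper offers no proof, only numerical verification up to $k=4$ and a heuristic motivation (the roots $-Kp(n+p-1)$ match the sphere's Laplace spectrum and the expected module structure). Your proposal is therefore not competing with an existing argument, and its overall architecture --- bound the degree by $k$, exhibit the $k$ distinct roots $\theta_p=-Kp(n+p-1)$, then pin down the leading constant --- is a sensible way to attack the problem, and your representation-theoretic reading of the linear system (\ref{linsystem}) as a tracelessness condition is essentially the conceptual picture the paper itself gestures at in its closing remarks. But as written the proposal does not close any of the three steps, and the step you flag as the crux is precisely the open problem.

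Concretely: (i) the vanishing $P_k(\theta_p)=0$ rests on identifying (\ref{target}) with a harmonic projector applied to $\nabla^k f$ and on the claim that the left-linking constraints single out that projector; you acknowledge you cannot reconcile the one-sided linking with the two-sided Brauer contraction, and without that bridge the key step is an assertion, not an argument. (ii) Your degree bound relies on the system being block-triangular with strictly lower-degree off-diagonal entries; the paper's own remark only claims diagonal dominance for large $n$ via Proposition \ref{dominant}, which is a weaker and different structural property, and linking a left pattern that differs from the right pattern does not obviously produce a lower-degree polynomial --- by (\ref{diagonal}) it is the \emph{matching} pattern that collapses to $n^l$ times a short word, while mismatched patterns yield full-length contracted words whose degrees you have not controlled. (iii) The constant $C$ is not derived at all; ``matching the $k=2,3$ values should give a telescoping recursion'' is a plan, not a computation, and note the paper's formula for $C$ is written with $r$ where $k$ is meant, so even the target expression needs care. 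In short, the proposal is a reasonable research program aligned with the paper's motivation, but every load-bearing step remains unproven, so it does not establish the conjecture.
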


\begin{rema}
We first notice that the number of variables in the linear system coincides with the number of equations, which is 
the number of pairings that can be chosen within $1, 2, ..., k$. This number can be computed as
$$\binom{k} 2 + \binom k 4 \cdot (4-1)!! + \cdots + \binom k{2\cdot [k/2]} \cdot (2[k/2] - 1)!!$$
The conjecture has been numerically checked up to $k = 4$. 
\end{rema}

\begin{rema}
Regarding the linear system (\ref{linsystem}), using the conclusion of Proposition \ref{dominant} we can show that the coefficient matrix is diagonally dominant when $n$ is sufficiently large and when $\theta$ is evaluated as a fixed number. If we regard the solution as a polynomial in $n$ then it is uniquely determined. 
\end{rema}

\begin{rema}
Numerical experiments also show that if we start with $\tau = ``a_1\cdots a_k a_{\sigma(1)}\cdots a_{\sigma(k)}"$ for any $\sigma\in \text{Sym}_k$ (whose polynomial has the same highest degree component, as shown in Proposition \ref{dominant}), we will end up with the same linear system (\ref{linsystem}). As a result, the solution $x^{i_1 i_3 \cdots i_{2k-1}}_{i_2 i_4 \cdots i_{2k}}$ remains the same, and Conjecture 2 still holds. A formal proof of this phenomenon is probably not very difficult. 
\end{rema}

\begin{rema}
In the study of MOSVAs over $n$-dimensional space forms, Conjecture 2 is key to understand the irreducible modules generated by eigenfunctions of eigenvalues 
$$\lambda = Kp(n-1+p), p = 0, 1, 2, ...$$ These eigenvalues coincides with the spectrum of the (global) Laplace-Beltrami operator over the sphere with sectional curvature $K$. It is shown in \cite{Q-MOSVA-2d} that when $n=2$, such modules are different from those generate by eigenfunctions with generic eigenvalues. In particular, the extensions of such modules with each other has to be completely reducible. We expect all the properties discussed in \cite{Q-MOSVA-2d} to hold in higher dimensions. Thus the conjecture has a conceptual reason to hold.  
\end{rema}

\noindent {\small \sc Pacific Institute of Mathematical Science | University Of Manitoba\\ 451 Machray Hall, 186 Dysart Road, Winnipeg, MB R3T 2N2, Canada}

\noindent (Current address) {\small \sc Department of Mathematics, University of Denver \\ 216 C. M. Knudson Hall, 2390 South York Street, Denver, CO 80208, USA}

\noindent {\em E-mail address}: fei.qi@umanitoba.ca | fei.qi.math.phys@gmail.com

\end{document}